\newcommand{\vlong}[1]{}
\newcommand{\yh}[1]{#1}
\newtheorem{conj}{Conjecture}[section]
\crefname{hypothesis}{Hypothesis}{Hypotheses}
\title{
On the Asymptotic Linear Convergence Speed of Anderson Acceleration, Nesterov Acceleration, and Nonlinear GMRES\thanks{To appear in SIAM Journal on Scientific Computing. Submitted June 20, 2020. Accepted October 27, 2020.
\funding{This work was funded in part by NSERC of Canada (RGPIN-2019-04155).}}}
\author{Hans De Sterck\thanks{Department of Applied Mathematics
University of Waterloo, 200 University Ave W, Waterloo, ON N2L 3G1, Canada
  (\email{hdesterck@uwaterloo.ca}, \email{yunhui.he@uwaterloo.ca}).}
  \and Yunhui He\footnotemark[2]}
\begin{document}

\maketitle

\begin{abstract}
We consider nonlinear convergence acceleration methods for fixed-point iteration $x_{k+1}=q(x_k)$, including Anderson
acceleration (AA), nonlinear GMRES (NGMRES), and Nesterov-type acceleration (corresponding to AA with window size one).
We focus on fixed-point methods that converge asymptotically linearly with convergence factor $\rho<1$
and that solve an underlying fully smooth and non-convex optimization problem.
It is often observed that AA and NGMRES substantially improve the asymptotic convergence behavior of the
fixed-point iteration, but this improvement has not been quantified theoretically. We investigate this problem
under simplified conditions.
First, we consider stationary versions of AA and NGMRES, and determine coefficients that result
in optimal asymptotic convergence factors, given knowledge of the spectrum of $q'(x)$ at the fixed point $x^*$.
This allows us to understand and quantify the asymptotic convergence improvement that can be provided by
nonlinear convergence acceleration, viewing $x_{k+1}=q(x_k)$ as a nonlinear preconditioner for AA and NGMRES.
Second, for the case of infinite window size, we consider linear asymptotic convergence bounds for GMRES
applied to the fixed-point iteration linearized about $x^*$. Since AA and NGMRES are equivalent to GMRES
in the linear case, one may expect the GMRES convergence factors to be relevant for AA and NGMRES as
$x_k \rightarrow x^*$.
Our results are illustrated numerically for a class of test problems from canonical tensor decomposition, comparing
steepest descent and alternating least squares (ALS) as the fixed-point iterations that are accelerated by AA and NGMRES.
Our numerical tests show that both approaches allow us to estimate asymptotic convergence speed for nonstationary
AA and NGMRES with finite window size.
\end{abstract}

\begin{keywords}
Anderson acceleration, Nesterov acceleration, nonlinear GMRES, asymptotic convergence, canonical tensor decomposition, alternating least squares
\end{keywords}

\begin{AMS}
65K10, 49M37, 65H10, 65F08, 65F10, 15A69
\end{AMS}


\section{Introduction}
\label{sec:intro}

This paper concerns convergence acceleration methods for nonlinear fixed-point iterations of the type
\begin{equation}\label{eq:fixed-point}
  x_{k+1}=q(x_{k}) \qquad k=0,1,2,\ldots,
\end{equation}
which may be employed to solve nonlinear equation systems
\begin{equation}\label{eq:nonl}
  g(x)=0,
\end{equation}
or other scientific computing problems such as nonlinear integral equations or optimization problems.
In particular, we consider the case where there is an underlying optimization problem associated with Equations \cref{eq:fixed-point,eq:nonl} to find a local minimum $x^*$ of
\begin{equation}
  \min_x f(x),
\label{eq:minf}
\end{equation}
where we assume in this paper that $f(x)$ may be nonconvex and is twice continuously differentiable, such that local minima
satisfy
\begin{align}
	g(x):= \nabla f(x) = 0,
\label{eq:g}
\end{align}
and the Hessian of $f(x)$, denoted by $H(x)$, exists.
We will also assume that $q(x)$ is continuously differentiable such that its Jacobian, $q'(x)$, exists.

\subsection{Nonlinear acceleration methods}
In this paper we consider nonlinear acceleration methods of two types \yh{with window size $m$}:
\begin{align}
  x_{k+1}&= q(x_k) + \sum_{i=1}^{\min(k,m)}\beta_{i}^{(k)}(q(x_k)-q(x_{k-i})) &\qquad k=0,1,2,\ldots,
\label{eq:AA}\\
%
  x_{k+1}&= q(x_k) + \sum_{i=0}^{\min(k,m)}\beta_{i}^{(k)}(q(x_k)-x_{k-i})  &\qquad k=0,1,2,\ldots.
\label{eq:NGMRES}
\end{align}
%
When the coefficients $\beta_{i}^{(k)}$ are determined by solving a least-squares
problem in every step $k$ that minimizes a linearized residual in the new iterate $x_{k+1}$, method
\cref{eq:AA} is known as \emph{Anderson acceleration (AA)} \cite{anderson1965iterative}, and method
\cref{eq:NGMRES} is known as the \emph{nonlinear generalized minimal residual (NGMRES)} iteration
\cite{washio1997krylov,oosterlee2000krylov}.
Specifically, AA($m$), with window size $m$, solves in every iteration the linear least-squares problem
\begin{align}
	\min_{\{\beta_i^{(k)} \}} \| r(x_k) + \sum_{i=1}^{\min(k,m)} \beta_i^{(k)} ( r(x_k) - r(x_{k-i})) \|_2^2,
\label{eq:Andersonbetas}
\end{align}
of size up to $m \times m$, with the residuals $r(x)$ of the fixed-point iteration defined by
\begin{align}
	r(x)=x-q(x).
\label{eq:resid}
\end{align}
NGMRES($m$) usually solves the linear least-squares problem
\begin{align}
	\min_{\{\beta_i^{(k)} \}} \| g(q(x_k)) + \sum_{i=0}^{\min(k,m)} \beta_i^{(k)} ( g(q(x_k)) - g(x_{k-i})) \|_2^2,
\label{eq:NGMRESbetas}
\end{align}
in each iteration, minimizing the linearized residual $g(x)$ of nonlinear equation \cref{eq:nonl} evaluated at the accelerated iterate $x_{k+1}$
\cite{washio1997krylov,oosterlee2000krylov,sterck2012nonlinear}.

Anderson acceleration dates back to the 1960s \cite{anderson1965iterative} and has over the years seen
substantial use in computational science. It has gained significant new interest over the past decade
\cite{fang2009two,walker2011anderson,brune2015composing,toth2015,evans2020proof}, both in terms of theoretical
developments and applications. The closely related nonlinear GMRES method was developed more
recently \cite{washio1997krylov,oosterlee2000krylov} and has also been used in various applications
\cite{sterck2012nonlinear,brune2015composing}.
Both NGMRES and AA are often combined with globalization methods
to safeguard against erratic convergence away from a fixed point, e.g., by using damping or restarting mechanisms
\cite{washio1997krylov,oosterlee2000krylov,brune2015composing}, or by using line search strategies in the case of optimization problems
\cite{sterck2012nonlinear,sterck2013steepest}.
Note also that, besides AA and NGMRES, several other methods can be used
as nonlinear convergence accelerators for fixed-point iterations, including nonlinear conjugate gradients, LBFGS, and algebraic multigrid,
see \cite{sterck2013adaptive,sterck2015nonlinearly,brune2015composing,de2016nonlinearly,de2018nonlinearly}.

When $m=1$ in \cref{eq:AA} and the iteration is applied to convex optimization problems
with steepest-descent (SD) fixed-point iteration \cref{eq:fixed-point} and specific choices
for $\beta_{1}^{(k)}$, \cref{eq:AA} is known as \emph{Nesterov's accelerated gradient descent} method \cite{nesterov1983method,NesterovBook},
which guarantees optimal convergence with sublinear rate $O(1/k^2)$ for convex functions $f(x)$ with $L$-Lipschitz-continuous gradients.
\yh{If, in addition, $f(x)$ is strongly convex with strong convexity constant $\mu$, Nesterov's method improves the linear convergence factor of steepest descent with step size $1/L$ from $1-\mu/L$ to $1-\sqrt{\mu/L}$.}
Nesterov acceleration has been extended to nonconvex functions and to accelerating other iterative optimization methods than SD, see, e.g., \cite{mitchell2020nesterov,ang2019accelerating}, where the $\beta_1^{(k)}$ are not pre-determined as in \cite{nesterov1983method,NesterovBook} nor determined
by least-squares problem (\ref{eq:Andersonbetas}), but are determined heuristically combined with restart.

When the coefficients $\beta_{i}^{(k)}$ in methods \cref{eq:AA,eq:NGMRES} are fixed independent of
$k$, the iterations are known as ($m$+1)-step stationary iterative methods \cite{MR703121,Varga1986,Kerkhoven1992}.
Specifically, we consider stationary AA with window size $m$, denoted by sAA($m$),
\begin{align}
  x_{k+1}&= q(x_k) + \sum_{i=1}^{m}\beta_{i} \, (q(x_k)-q(x_{k-i})) \qquad k=m,m+1,\ldots,
\label{eq:sAA}
\end{align}
and stationary NGMRES with window size $m$, denoted by sNGMRES($m$),
\begin{align}
  x_{k+1}&= q(x_k) + \sum_{i=0}^{m}\beta_{i} \, (q(x_k)-x_{k-i})  \qquad k=m,m+1,\ldots.
\label{eq:sNGMRES}
\end{align}
The linear asymptotic convergence factor of an ($m+1$)-step stationary iterative method at fixed point
$x^*$ is defined by its linear root-convergence factor:
\begin{equation}\label{eq:rhodef}
  \rho = \sup_{x_0,x_1,\cdots,x_{m}} \left( {\limsup\limits_{k\rightarrow\infty}} \, \|x_k-x^{*}\|^{1/k} \right),
\end{equation}
where the starting values $x_0,x_1,\cdots,x_{m}$ are restricted to values for which convergence to $x^*$
takes place \cite{Varga1986}.
Asymptotic convergence factors were analyzed in \cite{Kerkhoven1992} for two stationary
nonlinear acceleration methods -- Chebyshev acceleration and stationary second-order Richardson
iteration.

\begin{figure}
\centering
\includegraphics[width=0.45\linewidth]{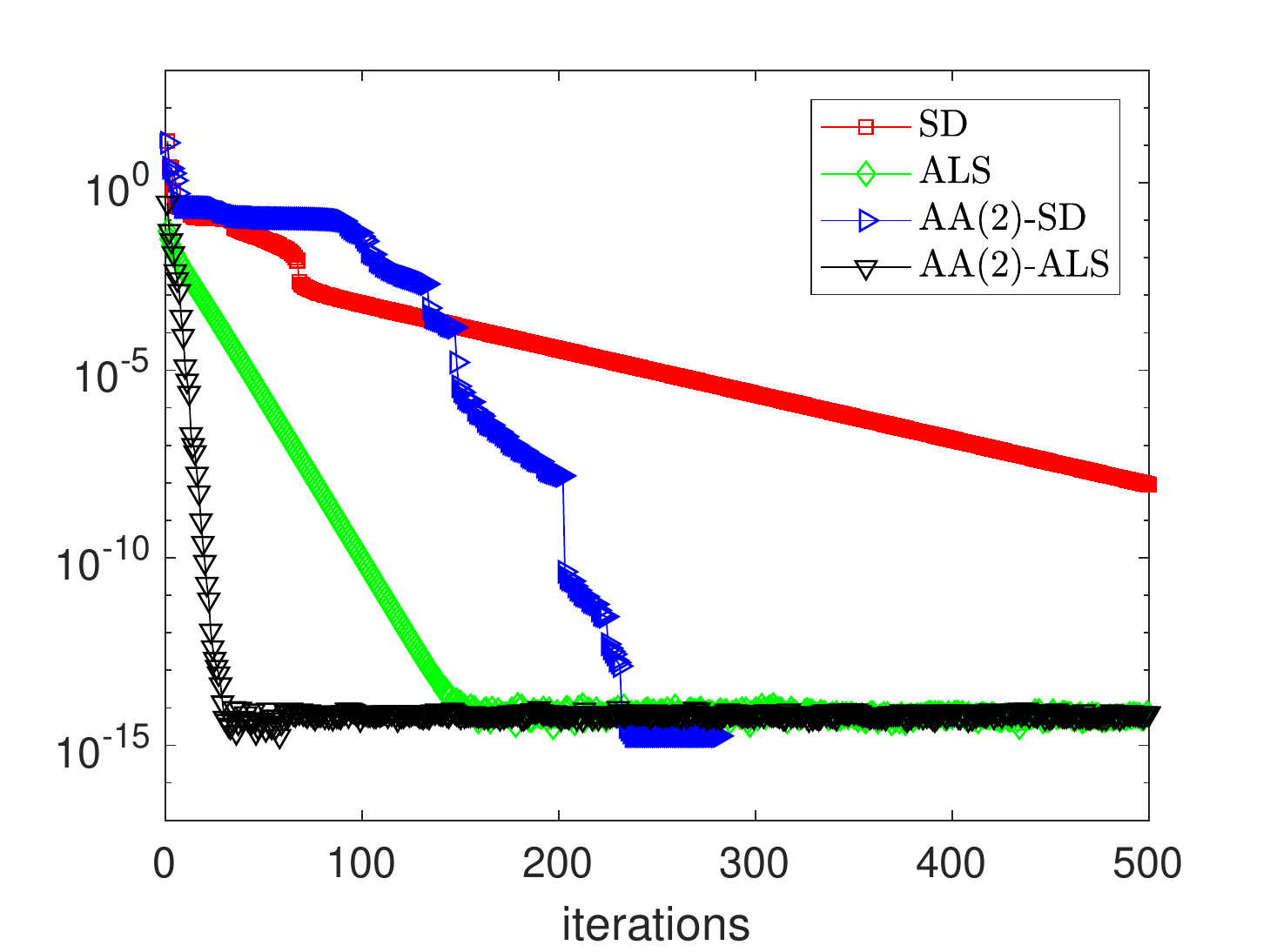}
\caption{Convergence comparison among SD, ALS, AA(2)-SD and AA(2)-ALS for a mildly ill-conditioned canonical tensor decomposition problem with collinearity parameter $c=0.5$. The vertical axis represents $f(x_k)-f(x^*)$, the convergence towards the minimum value of $f(x)$.} \label{compare-SD-ALS-AA-plot}
\end{figure}

When (nonstationary) AA or NGMRES are applied to fixed-point iteration \cref{eq:fixed-point} with differentiable $q(x)$,
this often results in dramatically improved asymptotic convergence behavior near the fixed point $x^*$,
compared to the linear asymptotic convergence factor of fixed-point iteration (\ref{eq:fixed-point}) as
determined by the spectral radius of $q'(x)$ evaluated at $x^*$.
However, there are no known theoretical results to quantify or predict this convergence improvement.
For example, Fig.\ \ref{compare-SD-ALS-AA-plot} shows typical convergence plots for a smooth nonconvex
optimization problem (\ref{eq:minf}) that represents the approximation of a three-dimensional tensor by a low-rank
canonical tensor decomposition \cite{kolda2009tensor,acar2011scalable,sterck2012nonlinear}
(see Sections \ref{sec:back} and \ref{sec:num-sec} for problem description and parameters).
Here optimality equation (\ref{eq:g}) can be solved by fixed-point methods (\ref{eq:fixed-point}) that implement
steepest descent (SD), with fixed-point function $q_{SD}(x)$, or an alternating least-squares (ALS) approach, with $q_{ALS}(x)$. ALS is a form of block coordinate descent or block nonlinear Gauss-Seidel \cite{kolda2009tensor,acar2011scalable}.
Anderson acceleration (in its NGMRES form) was first applied to the problem of canonical tensor decomposition in 2012 in \cite{sterck2012nonlinear},
accelerating the convergence of SD and ALS.
As is well-known, asymptotic convergence of steepest descent is linear with a convergence factor that is
increasingly poor for more ill-conditioned problems \cite{Luenberger} (the problem of Fig.\ \ref{compare-SD-ALS-AA-plot} is mildly ill-conditioned, see Section \ref{sec:num-sec}).
Fig.\ \ref{compare-SD-ALS-AA-plot} shows that ALS converges with a much improved convergence factor relative to SD.
We then apply AA(2) to $q_{SD}(x)$ and to $q_{ALS}(x)$,
and in both cases we see that the convergence is substantially improved.
\yh{In this paper we use canonical tensor decomposition as a test problem to illustrate our findings, because it
is an important problem where dramatic acceleration by AA or NGMRES as in Fig.\ \ref{compare-SD-ALS-AA-plot} has
long been observed but has not yet been explained, and because the ALS iteration exhibits a complex Jacobian
spectrum and, thus, serves well to illustrate an important case in our analysis.}

In this paper, we are interested in quantifying the asymptotic convergence improvement near the fixed
point $x^*$ that is provided by AA, Nesterov and NGMRES compared to the linear asymptotic convergence
factor of fixed-point iteration (\ref{eq:fixed-point}).
To prepare for this endeavour, it is useful to first recall the linear case.

\subsection{The linear case: preconditioned GMRES}

It is well-known that the AA and NGMRES methods of Eqs.\ \cref{eq:AA,eq:NGMRES}
with window size $m=\infty$ are essentially equivalent in the linear case to the well-known
GMRES iterative algorithm for solving $Ax=b$, with $A \in \mathbb{R}^{n \times n}$,
see \cite{washio1997krylov,oosterlee2000krylov,fang2009two,walker2011anderson}.

Specifically, applying NGMRES iteration (\ref{eq:NGMRES}) to fixed-point method (\ref{eq:fixed-point}) reduces
to preconditioned GMRES for $Ax=b$ when using fixed-point function
\begin{align}
q(x)=(I-P\,A)x+P b.
\label{eq:qprec}
\end{align}
Here, $P$ is the preconditioning matrix and fixed-point iteration (\ref{eq:fixed-point})
corresponds to solving the left-preconditioned system $PAx=Pb$ \cite{washio1997krylov,oosterlee2000krylov,sterck2012nonlinear,sterck2013steepest}.
For example, with $P=L^{-1}$, where $L$ is the lower triangular part of $A$, the fixed-point iteration is the Gauss-Seidel iteration and
we obtain GMRES preconditioned by Gauss-Seidel.
When $P=\alpha I$, for some constant $\alpha$, the fixed-point iteration is known as Richardson iteration,
and when $A$ is symmetric positive definite this corresponds to a steepest descent iteration with step length $\alpha$
for minimizing $f(x)=x^T A x/2-b^T x$ \cite{sterck2013steepest};
in other words, preconditioning AA or NGMRES by SD for an optimization problem
corresponds to using the identity preconditioner for GMRES in the linear case \cite{sterck2013steepest}.

It is well-known that the asymptotic convergence of preconditioned GMRES is
determined by matrix properties of $I - q'(x^*)= PA$, including
the condition number, field of values \cite{trefethen2005spectra}, and eigenvalue clustering of the matrix.
Let $r_k=Pb-PAx_k$ be the $k$th residual of the preconditioned GMRES iteration in the linear case.
GMRES minimizes $\|r_k\|$ over an expanding subspace, guaranteeing non-increasing residual norms.
For important classes of matrices $A\in\mathbb{R}^{n \times n}$ and preconditioners $P$
it can be shown that linear convergence bounds
for the preconditioned GMRES residual reduction exist, where the following holds for any initial residual $r_0$:
\begin{align}
\frac{\|r_k\|}{\|r_0\|} \le c \rho^k,
\label{eq:linconv}
\end{align}
with constants $0<c$ and $0<\rho<1$.
For example, for any matrix $PA$ for which 0 does not belong to the field of values of $PA$, a convergence bound of type
(\ref{eq:linconv}) can be computed where the value of $\rho<1$ depends on simple properties of the field of values of $PA$,
and $c<10$ \cite{beckermann2005some};
$\rho$ in (\ref{eq:linconv}) is called an asymptotic convergence factor \cite{beckermann2005some}.
Choosing a suitable, problem-dependent preconditioner $P$ may result in much improved asymptotic convergence factors compared to non-preconditioned GMRES. 

It is important to note, however, that there are also matrices $PA$ and initial residuals $r_0$ for which the
GMRES residual $r_k$ in iteration $k$ remains large until $k$ reaches $n$ and GMRES reaches the exact
solution (in exact arithmetic) \cite{greenbaum1996any}. Moreover, such matrices and initial residuals can be constructed
for any choice of the $n$ eigenvalues of $PA$. So whether or not a useful linear convergence bound of type
(\ref{eq:linconv}) exists (i.e., with $c$ and $\rho$ small enough for the bound to predict
residual reduction that is at least linear for $k \ll n$), depends not only
on the eigenvalue spectrum of $PA$, but also on the angles between the eigenvectors of $PA$. For example,
when $PA$ is normal, the pathological behavior from \cite{greenbaum1996any} does not occur, and, as already
mentioned above, the same is true when 0 does not belong to the field of values of $PA$.

Finally, note also that, just like fixed-point iteration (\ref{eq:fixed-point}) with $q(x)$ given by (\ref{eq:qprec}) is called
a preconditioning iteration for GMRES, fixed-point iteration (\ref{eq:fixed-point}) with nonlinear functions
$q_{SD}(x)$ or $q_{ALS}(x)$ can be viewed as nonlinear preconditioning iterations for NGMRES or AA
\cite{sterck2012nonlinear,sterck2013steepest,brune2015composing}.
The nonlinear preconditioning iteration (\ref{eq:fixed-point}) (inner iteration) can be viewed as accelerating the convergence of NGMRES or AA (outer  iteration), or, alternatively, the outer iteration can be viewed as a nonlinear convergence accelerator for the inner iteration
\cite{washio1997krylov,oosterlee2000krylov,sterck2012nonlinear,sterck2013steepest,brune2015composing}.

\subsection{Convergence theory for nonlinear acceleration methods}
Until recently, little was known about convergence theory for AA and NGMRES.
There was no convergence proof for AA until the recent paper \cite{toth2015}, which shows that AA($m$)
is locally $r$-linearly convergent under the assumptions that $q(x)$ is contractive and the AA coefficients
remain bounded, but there is no proof that AA actually improves the convergence speed.
A convergence proof for NGMRES in the optimization context was given in \cite{sterck2013steepest}, but it relies on a line search globalization
step and only applies to the case where $q(x)$ is steepest descent with a line search that satisfies the Wolfe conditions.
Recently, \cite{evans2020proof} has made progress on the topic of understanding AA convergence acceleration by showing that,
to first order, the convergence gain provided by AA in step $k$ is
quantified by a factor \yh{$\theta_k \le 1$ that equals the ratio of the square root of the optimal value defined in (\ref{eq:Andersonbetas})  to $\|r(x_k)\|_2$}.
However, it is not clear how $\theta_k$ may
be evaluated or bounded in practice and how it may translate to improved asymptotic convergence behavior in general.
This is not surprising, though, since, as discussed above, for linear preconditioned GMRES the existence of linear asymptotic
convergence bounds depends on the properties of $PA=I-q'(x^*)$, see \cref{eq:qprec}, and is, thus, problem-dependent.

Just like in the linear case, it is natural to expect, however, that the asymptotic convergence
speeds of AA and NGMRES applied to nonlinear fixed-point iterations \cref{eq:fixed-point}
will also depend on matrix properties of $I-q'(x^*)$, including
the condition number, field of values and eigenvalue clustering of the matrix.
This paper will develop techniques and approaches
that will allow us to demonstrate that this is indeed the case
and quantify this.
This will shed light on how AA and NGMRES may be effective
in accelerating the asymptotic convergence of fixed-point method
(\ref{eq:fixed-point}) depending on matrix properties of $I-q'(x^*)$.

Since we are not aware of a tractable approach to investigate asymptotic convergence
for the nonstationary versions of AA and NGMRES with finite window size, we first resort
to stationary versions \cref{eq:sAA,eq:sNGMRES} of AA and NGMRES with small window size,
for which we determine the optimal coefficients $\beta_{i}$ that minimize the asymptotic convergence factor
$\rho$ of Eq.\ \cref{eq:rhodef}, given knowledge of $x^*$ and $q'(x^*)$.
The optimal stationary methods we consider are not intended to be practical
computational tools, since we need to know $x^*$ and $q'(x^*)$ to compute the optimal $\beta_{i}$,
but they do allow us to make substantial progress in understanding and quantifying
how sAA and sNGMRES can improve the asymptotic convergence speed of fixed-point iteration
(\ref{eq:fixed-point}).
We derive theoretical results on optimal weights for sAA(1) and sNGMRES(1)
for the case that all eigenvalues of $q'(x^*)$ are real, and for the complex eigenvalue
case. For sAA(1) applied to steepest descent, we obtain known optimal weights for
Nesterov acceleration of steepest descent for the case of sufficiently smooth $f(x)$,
see, for example, \cite{NesterovBook,odonoghue2015adaptive,scieur2016regularized,lessard2016analysis}.
In our numerical results section we also compare with the (nonstationary) Nesterov-type acceleration
methods of \cite{mitchell2020nesterov} with restart.
The nonstationary AA and NGMRES do not use these \emph{globally optimal} stationary coefficients,
but rather perform a \emph{local optimization} of the coefficients in every step $k$ based on Eqs.\ (\ref{eq:Andersonbetas}) and
(\ref{eq:NGMRESbetas}).
As $x$ approaches $x^*$ in the asymptotic regime and $q'(x)$ approaches
$q'(x^*)$, it is not unreasonable to expect the convergence behavior of AA and NGMRES with locally-optimal
$\beta_i^{(k)}$ weights to be similar to the behavior of sAA and sNGMRES with
weights that are, based on $q'(x^*)$, globally optimal in obtaining the best asymptotic convergence rate.
In the numerical results at the end of the paper
we investigate this.

In a second approach for quantifying the asymptotic convergence behavior of AA and NGMRES,
we investigate optimal convergence for infinite window size.
We apply GMRES with $m=\infty$ to fixed-point equation (\ref{eq:fixed-point}) linearized about $x^*$, and
use known techniques to obtain an asymptotic convergence factor bound.
We investigate numerically whether this convergence factor for the linear case may also be relevant
for the nonlinear AA($\infty$) and NGMRES($\infty$) iterations as $x_k \rightarrow x^*$.

The rest of this paper is organized as follows. Section \ref{sec:back} provides background on the tensor approximation
problem we use as a case study in our paper, and on convergence of stationary iterative methods.
Sections \ref{sec:convergence-Anderson} and \ref{sec:convergence-NGMRES} derive optimal asymptotic convergence factors for the stationary sAA($m$) and sNGMRES($m$) iterations with optimal coefficients, based on the spectrum of $q'(x^*)$.
Section \ref{sec:inf-bounds} discusses asymptotic convergence factor estimates that are derived
from applying GMRES to the fixed-point iteration linearized about $x^*$.
Section \ref{sec:num-sec} provides numerical tests to illustrate how the asymptotic convergence acceleration provided by
AA and NGMRES is determined by matrix properties of $q'(x^*)$. Section \ref{sec:concl} formulates conclusions.

\section{Background}\label{sec:back}

\subsection{Canonical tensor decomposition}
In this paper we consider the problem of canonical tensor decomposition: we solve the following nonconvex optimization problem to fit an $N$-mode data tensor $\mathcal{Z}$ with a rank-$r$ tensor in the Frobenius norm,
\begin{equation}\label{eq:tensorf}
  \min f(A^{(1)},A^{(2)},\cdots, A^{(N)}) := \frac{1}{2}\Big\|\mathcal{Z}-[[A^{(1)},A^{(2)},\cdots, A^{(N)}]]\Big\|,
\end{equation}
where
\begin{equation}\label{eq:can}
  [[A^{(1)},A^{(2)},\cdots, A^{(N)}]] = \sum_{j=1}^{r} a^{(1)}_j\circ a^{(2)}_j \circ \cdots a^{(N)}_j.
\end{equation}
Here, $\circ$ denotes the vector outer product, and $a^{(m)}_j$ are the columns of
factor matrices $A^{(m)} \in \mathbb{R}^{n_m \times r}$, for $j = 1,\ldots,r$, $m=1,\ldots,N$.
We consider two fixed-point methods of form (\ref{eq:fixed-point}) that will be accelerated by sAA and sNGMRES: SD and ALS.

For SD with constant step length $\alpha$, we have
\begin{equation}\label{eq:SD-form}
  x_{k+1} = q_{SD}(x_k) = x_k -\alpha \nabla f(x_k).
\end{equation}
Furthermore,
\begin{equation}\label{eq:SD-jac}
q'_{SD}(x) =I -\alpha H(x).
\end{equation}

In each iteration, ALS sequentially updates a block of variables at a time, by minimizing expression (\ref{eq:tensorf}) while keeping the other blocks fixed.
Updating a factor matrix $A^{(i)}$ is a linear least-squares problem, see
\cite{kolda2009tensor,acar2011scalable,mitchell2020nesterov}.
The ALS fixed-point function is denoted by $q_{ALS}(x)$. As discussed in \cite[{\rm Lemma }3.2]{Uschmajew2012} or \cite{Ortega2000},
\begin{equation}\label{eq:GS-iter}
  q'_{ALS}(x^*) = I -M^{-1}(x^*)H(x^*),
\end{equation}
where $M$ is the lower block triangular part of $H(x)$ (including the block diagonal).
The derivation of $H(x)$ can be found in \cite{acar2009optimizationHess}.
Convergence of ALS is proved in \cite{Uschmajew2012}.

Comparing with $q'(x)=I-P\,A$ for the fixed-point function of linear preconditioned GMRES in (\ref{eq:qprec}),
we see that accelerating $q_{ALS}(x)$ with AA or NGMRES is indeed the nonlinear equivalent of using a block Gauss-Seidel
type preconditioner for GMRES, where the Hessian $H(x)$ plays the role of the non-preconditioned matrix $A$,
and $M^{-1}(x)$ plays a role similar to the linear Gauss-Seidel preconditioning matrix $P$ with $P=L^{-1}$ and $L$ being the lower triangular part of $A$.
\vlong{Similarly, using $q_{SD}(x)$ as the nonlinear preconditioner for AA or NGMRES is the nonlinear equivalent of using non-preconditioned GMRES, with $P=\alpha I$, see also \cite{sterck2013steepest}.}

Due to the scaling indeterminacy in the rank-$r$ tensor $[[A^{(1)},A^{(2)},\cdots, A^{(N)}]]$,
$H(x^{*})$ has at least $(N-1)r$ zero eigenvalues \cite{Uschmajew2012}.
Thus, we need to modify the definition of condition number of a matrix in our discussion.
Assume that $H(x^*)$ is positive semi-definite. We define the (modified) condition number of $H(x^*)$ as
\begin{equation}\label{eq:modified-condition-number}
  \bar{\kappa}=\frac{\lambda_{\max}}{\lambda_{\min}}=\frac{L}{\ell},
\end{equation}
where $L= \lambda_{\max}$ is the largest eigenvalue of $H(x^{*})$ and $\ell=\lambda_{\min}$ is the smallest nonzero eigenvalue of $H(x^{*})$. For simplicity, we assume that $L>\ell$ in what follows.

\subsection{Asymptotic convergence of stationary acceleration methods}
In Sections \ref{sec:convergence-Anderson} and \ref{sec:convergence-NGMRES} we will rely on asymptotic
convergence results for stationary iterations \cref{eq:sAA,eq:sNGMRES}.
Consider sNGMRES($m$) of \cref{eq:sNGMRES} written in system form
 \begin{equation}\label{eq:vecit}
   \boldsymbol{y}_{k+1}= \Psi(\boldsymbol{y}_{k}),
 \end{equation}
where $\boldsymbol{y}_k =\begin{pmatrix} x_k & x_{k-1} &\cdots & x_{k-m} \end{pmatrix}^T$ and
\begin{equation} \label{equ:matrix-NGMRES}
  \Psi(\boldsymbol{y}_k) = \begin{pmatrix}   (1+\sum_{i=0}^{m}\beta_{i})q(x_k) - \sum_{i=0}^{m}\beta_{i} x_{k-i}\\
     x_k\\
     \vdots\\
     x_{k-m+1}
  \end{pmatrix}.
\end{equation}
We state a convergence result form \cite{Varga1986}:
\begin{theorem}\label{thm:conv}
If the ($m+1$)-step stationary iterative method \cref{equ:matrix-NGMRES} converges
locally near fixed point $x^{*}$, then the linear root-convergence factor is equal to the spectral radius $\rho(T(q'(x^*)))$
of the $(m+1)\times (m+1)$ block matrix
$T(q'(x^*)):=\Psi'(\boldsymbol{y}^*)$, with $\boldsymbol{y}^*=\begin{pmatrix} x^* & x^* &\cdots & x^* \end{pmatrix}^T$,
where
\begin{equation}\label{eq:TsNGMRES}
  T(q'(x^*)) = \begin{bmatrix}  (1+\sum_{i=0}^{m}\beta_{i})q'(x^*)-\beta_0 I &-\beta_{1}I  &\cdots   & -\beta_{m-1}I &-\beta_{m}I\\
  I&   0&   &    0&  0\\
  0&   I&   &    0&  0\\
  \vdots & \vdots &  &\vdots &\vdots\\
  0&  0&  \cdots&   I& 0
  \end{bmatrix}.
\end{equation}
\end{theorem}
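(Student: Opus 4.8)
The plan is to use the reformulation already recorded in \cref{eq:vecit}, which recasts the $(m+1)$-step stationary recursion \cref{eq:sNGMRES} as a single first-order fixed-point iteration $\boldsymbol{y}_{k+1} = \Psi(\boldsymbol{y}_k)$ on the product space $\mathbb{R}^{n(m+1)}$, and then to invoke the classical linearization theory for one-step iterations. First I would verify that $\boldsymbol{y}^* = \begin{pmatrix} x^* & x^* & \cdots & x^* \end{pmatrix}^T$ is a fixed point of $\Psi$: substituting $x_{k-i} = x^*$ for all $i$ into \cref{equ:matrix-NGMRES} and using $q(x^*) = x^*$, the first block evaluates to $(1 + \sum_{i=0}^{m} \beta_i)\,x^* - (\sum_{i=0}^{m} \beta_i)\,x^* = x^*$, while the shifted blocks reproduce $x^*$ directly, so $\Psi(\boldsymbol{y}^*) = \boldsymbol{y}^*$.

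Next I would compute the Fréchet derivative $\Psi'(\boldsymbol{y}^*)$ block by block and confirm that it equals the matrix $T(q'(x^*))$ displayed in \cref{eq:TsNGMRES}. Differentiating the first block of \cref{equ:matrix-NGMRES} with respect to the argument $x_k$ gives $(1 + \sum_{i=0}^{m} \beta_i)\,q'(x^*) - \beta_0 I$, where the $-\beta_0 I$ comes from the $i=0$ term of $\sum_{i=0}^{m} \beta_i x_{k-i}$; differentiating with respect to $x_{k-j}$ for $1 \le j \le m$ gives $-\beta_j I$. The remaining block rows of $\Psi$ are the pure shifts $x_k, x_{k-1}, \dots, x_{k-m+1}$, whose derivatives contribute the subdiagonal identity blocks and zeros. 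This reproduces \cref{eq:TsNGMRES} exactly, so $T(q'(x^*)) = \Psi'(\boldsymbol{y}^*)$. Since $q$ is continuously differentiable, $\Psi$ is continuously differentiable at $\boldsymbol{y}^*$, which is precisely what the linearization theory requires.

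With these two facts in hand, I would apply the standard convergence result for first-order stationary iterations (as in \cite{Varga1986} and \cite{Ortega2000}): when $\boldsymbol{y}_{k+1} = \Psi(\boldsymbol{y}_k)$ converges locally to $\boldsymbol{y}^*$ and $\Psi$ is differentiable there, the linear root-convergence factor of the sequence $\boldsymbol{y}_k$ — the supremum over convergent starting data of $\limsup_{k\to\infty} \|\boldsymbol{y}_k - \boldsymbol{y}^*\|^{1/k}$ — equals the spectral radius $\rho(\Psi'(\boldsymbol{y}^*))$. The genuinely delicate step, and the part I expect to require the most care, is translating this root-convergence factor for the expanded vector $\boldsymbol{y}_k$ into the factor $\rho$ defined in \cref{eq:rhodef} for the original iterates $x_k$. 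The bridge is that $\boldsymbol{y}_k - \boldsymbol{y}^*$ merely stacks the window $\{x_{k-i} - x^*\}_{i=0}^{m}$, so by equivalence of norms on the finite-dimensional product space $\|\boldsymbol{y}_k - \boldsymbol{y}^*\|$ is comparable to $\max_{0 \le i \le m} \|x_{k-i} - x^*\|$; taking $k$-th roots and the $\limsup$ annihilates both the fixed window shift $i$ and the norm-equivalence constants, so the two root-convergence factors coincide. Combining this with the attraction part of the theorem — the existence of admissible starting values along the dominant eigendirection of $T(q'(x^*))$ for which the rate is actually attained, which makes the supremum in \cref{eq:rhodef} sharp — yields the claimed equality $\rho = \rho(T(q'(x^*)))$.
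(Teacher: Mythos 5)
Your argument is correct, but there is an important contextual point: the paper never proves this theorem itself. It is stated as a quoted result from \cite{Varga1986}, and the paper's only contribution to it is the companion-form setup in \cref{eq:vecit}--\cref{equ:matrix-NGMRES} that you also start from. So your proposal is not an alternative to an internal proof; it is a reconstruction of the standard argument that the citation stands in for. Within that argument, your steps are sound: the fixed-point check and the block-by-block Jacobian computation reproducing \cref{eq:TsNGMRES} are exactly right, and the translation of root-convergence factors between the stacked iterate $\boldsymbol{y}_k$ and the original iterate $x_k$ -- which you correctly single out as the delicate step -- works as you say: writing $a_k=\|x_k-x^*\|$, for each fixed shift $i$ one has $\limsup_k a_{k-i}^{1/k}=\limsup_j \bigl(a_j^{1/j}\bigr)^{j/(j+i)}=\limsup_j a_j^{1/j}$ since the exponent $j/(j+i)\to 1$ and the $a_j$ are bounded, and $\limsup$ commutes with a maximum over finitely many sequences; moreover the admissible starting data $(x_0,\dots,x_m)$ in \cref{eq:rhodef} correspond bijectively to admissible starting vectors for the one-step iteration, so the two suprema agree. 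Two caveats deserve flagging. First, both halves of your final step -- the upper bound $\limsup_k\|\boldsymbol{y}_k-\boldsymbol{y}^*\|^{1/k}\le\rho(\Psi'(\boldsymbol{y}^*))$ and the attainment of equality along a dominant eigendirection -- are invoked from the classical one-step linearization theory (Ostrowski's theorem and its sharpness, as in \cite{Ortega2000}) rather than proved; this is legitimate, since that is precisely the content of the result being cited, but the attainment half is the nontrivial part and should carry the explicit citation. Second, the theorem's hypothesis is local convergence, not $\rho(T)<1$, and in the paper's own application $T$ has eigenvalues equal to $1$ (from the Hessian degeneracy of the tensor problem), a case the paper only patches informally after the theorem by redefining $\rho(T)$ as the second-largest eigenvalue modulus; your proof, like the classical theorem it invokes, implicitly assumes $\rho(T)<1$, so it inherits the same blind spot as the quoted statement -- not a flaw relative to the paper, but worth being aware of.
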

\vlong{The convergence result from \cite{Varga1986} shows that the asymptotic convergence factor of the stationary
iteration \cref{eq:sNGMRES} for a nonlinear fixed-point function $q(x)$ equals the asymptotic convergence factor
of the stationary linear iteration obtained by applying \cref{eq:sNGMRES} to the linearized fixed-point function,
where the linearization is done about the fixed point.}
An equivalent result follows easily for sAA($m$), Eq.\ \cref{eq:sAA}.

One technical complication with the previous result is the following.
For $q_{SD}(x)$ and $q_{ALS}(x)$, \cref{eq:SD-form} and \cref{eq:GS-iter}
show that $T$ is a function of the Hessian. If the Hessian has zero eigenvalues at $x^*$, as in our canonical tensor decomposition
problem, then $q'(x^*)$ has eigenvalues 1, and, as a consequence $T$ in \cref{eq:TsNGMRES} also has
eigenvalues 1.
In that case, we denote $\rho(T)$ as the second largest modulus of eigenvalues of $T$ (but if the Hessian is positive definite, $\rho(T)$ stands for the standard spectral radius of $T$), and similar for $\rho(q')$. Also, for a given matrix $B$, $\sigma(B)$ denotes the spectrum of $B$.

Note also that a simple application of \cref{thm:conv}, with $m=0$, $q(x)=x-\nabla f(x)$, and $\beta_0=\alpha-1$,
can be used to show that, for the steepest descent method, the optimal asymptotic convergence factor is given by
\begin{equation}
  \rho_{SD} = \frac{\bar{\kappa}-1}{\bar{\kappa}+1},
\end{equation}
with optimal step length $\alpha=\frac{2}{L+\ell}$;
see also \cite{Luenberger}.

\section{Optimal asymptotic convergence factors for stationary Anderson and Nesterov acceleration}\label{sec:convergence-Anderson}
In this section, we consider the theoretical problem of finding coefficient $\beta_0$ in sAA($m$) iteration (\ref{eq:sAA}) for
$m=1$ that results in the optimal convergence factor, assuming $q'(x^*)$ is known. We simplify notation and
consider the iteration
\begin{equation}\label{eq:anderson-1-step}
  x_{k+1} = (1+\beta) q(x_k) -\beta q(x_{k-1}).
\end{equation}
We will consider \yh{two cases: fixed-point iterations where the Jacobian $q'(x^*)$ has real spectrum (e.g., $q_{SD}(x)$) or
complex spectrum (e.g., $q_{ALS}(x)$).}
Note that the sAA(1) iteration of \cref{eq:anderson-1-step} is also a stationary version of Nesterov acceleration
\cite{nesterov1983method,NesterovBook,mitchell2020nesterov}.

We can rewrite the above iteration as a system
\begin{equation}\label{system-form}
\boldsymbol{y}_{k+1}=\begin{pmatrix}
x_{k+1}\\x_k
\end{pmatrix} =
\begin{pmatrix}
(1+\beta)q(x_k) -\beta q(x_{k-1})\\
x_{k}
\end{pmatrix}
 =: \Psi(\boldsymbol{y}_k),
\end{equation}
and we find that
\begin{equation}\label{eq:matrix-step1}
\Psi'(x^*,x^*)=  T(q'(x^*))  = \begin{bmatrix}
(1+\beta)q'(x^*)& -\beta q'(x^*)\\
I  &  0
\end{bmatrix}.
\end{equation}
\yh{We will write $T(q'(x^*);\beta)$ when it is useful to emphasize the dependence of $T$ on $\beta$.}

Let $\mu \in \sigma(q')$, where we assume from now on that $q'$ is evaluated in $x^*$.
It can be shown easily that the eigenvalues $\lambda$ of $T$ in (\ref{eq:matrix-step1}) satisfy
\begin{equation}\label{eq:AA-eig-form}
  \lambda^2-(1+\beta)\mu \lambda+ \beta \mu=0.
\end{equation}
Then, the two roots of \cref{eq:AA-eig-form}  are given by
\begin{equation}
 \yh{\lambda_{1,2}} =\frac{(1+\beta)\mu\pm \sqrt{(1+\beta)^2\mu^2-4\beta\mu}}{2}.\label{eq:g1-postive-mu}
\end{equation}

For any given $\mu$, we define the set
\begin{equation*}
 \yh{ \mathcal{S}_{\mu}(\beta) =\Big\{ |\lambda_1|, |\lambda_2| \Big\}.  }
\end{equation*}

\subsection{Optimal asymptotic convergence factor \yh{of sAA(1) applied to fixed-point methods with real Jacobian spectrum}}
\yh{We first consider fixed-point methods with Jacobians $q'(x^*)$ that have real spectrum.
For example, if $q$ is the steepest-descent fixed-point function of \cref{eq:SD-form}, it is obvious that $\mu \in \mathbb{R}$.}

Since the eigenvalues of $q'$ will affect the eigenvalues $\lambda$ of $T$ , it is useful to know how $|\lambda|$  changes for  any $\mu \in \sigma(q')$, and what the optimal result is of  $\min_{\beta}\max\mathcal{S}_{\mu}(\beta)$ for a given $\mu$.
We first consider the nonnegative case.
\begin{lemma}\label{minmax-real-nonnegative-case}
1. Assume $0<\mu<1$. Then
\begin{equation}\label{eq:lemma-result}
  \min_{\beta\in\mathbb{R}}\max \mathcal{S}_{\mu}(\beta) =  1-\sqrt{1-\mu},
\end{equation}
\yh{where the unique optimal $\beta$ is given by
$\displaystyle \beta_{\rm opt}(\mu)=\frac{1-\sqrt{1-\mu}}{1+\sqrt{1-\mu}}.$
Moreover, given any $\mu_1$ and $\mu_2$ such that $0<\mu_1<\mu_2<1$, we have}
\begin{equation*}
\max \mathcal{S}_{\mu_1}\big(\beta_{\rm{opt}}(\mu_2)\big)< \min_{\beta \in\mathbb{R}}\max\mathcal{S}_{\mu_2}(\beta).
\end{equation*}
2. Assume $1\leq \mu$. Then
$\displaystyle    \min_{\beta\in\mathbb{R}}\max \mathcal{S}_{\mu}(\beta) =  \sqrt{\mu}$,  \yh{ where the unique optimal $\beta$ is $\beta_{\rm opt}=-1$.}
\end{lemma}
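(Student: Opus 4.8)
The plan is to reduce the whole statement to a one-variable study of the function $g(\beta):=\max\mathcal{S}_{\mu}(\beta)$, the larger of the two root moduli of the quadratic \cref{eq:AA-eig-form}, and to locate its minimizer over $\beta\in\mathbb{R}$. The organizing observation is that the discriminant $(1+\beta)^2\mu^2-4\beta\mu=\mu\big((1+\beta)^2\mu-4\beta\big)$ dictates the nature of the roots: where it is negative the roots are complex conjugates of common modulus $\sqrt{\beta\mu}$, so $g(\beta)=\sqrt{\beta\mu}$ and necessarily $\beta>0$; where it is nonnegative both roots are real and, using $\max(|a|,|b|)=\tfrac12(|a+b|+|a-b|)$ with $a+b=(1+\beta)\mu$, we get $g(\beta)=\tfrac12\big(|(1+\beta)\mu|+\sqrt{(1+\beta)^2\mu^2-4\beta\mu}\big)$. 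I would treat the two regimes separately and glue them along the discriminant-zero boundary.

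For Part~1 ($0<\mu<1$) I would first solve the boundary equation $(1+\beta)^2\mu=4\beta$. The key algebraic simplification is that its two roots are $(1\mp\sqrt{1-\mu})^2/\mu$, which collapse via $\mu=(1-\sqrt{1-\mu})(1+\sqrt{1-\mu})$ to $\beta_{\pm}=\frac{1\mp\sqrt{1-\mu}}{1\pm\sqrt{1-\mu}}$; in particular the smaller value equals $\beta_{\mathrm{opt}}(\mu)$, and the associated double root has modulus $1-\sqrt{1-\mu}$. On the complex interval $[\beta_-,\beta_+]$ the value $g(\beta)=\sqrt{\beta\mu}$ is increasing, hence minimized at $\beta_-$ with value $1-\sqrt{1-\mu}$. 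It then remains to rule out the real regime: for $\beta>\beta_+$ the bound $g(\beta)\ge\sqrt{\beta\mu}>\sqrt{\beta_+\mu}=1+\sqrt{1-\mu}$ settles it, while for $\beta<\beta_-$ I would show $g$ strictly decreases down to $1-\sqrt{1-\mu}$ at $\beta_-$. Concretely, computing $d\lambda_{\max}/d\beta$ and invoking $\mu<1$ shows the dominant real root strictly decreases as long as $(1+\beta)\mu<2$ (covering $\beta\in(-1,\beta_-)$), and for $\beta\le-1$ the opposite-sign roots give $g\ge\sqrt{|\beta|\mu}\ge\sqrt{\mu}>1-\sqrt{1-\mu}$; the strictness yields uniqueness of $\beta_{\mathrm{opt}}$. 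The ``moreover'' inequality then falls out almost for free: since $\beta_{\mathrm{opt}}(\mu_2)$ sits exactly on the discriminant-zero boundary for $\mu_2$, any $\mu_1<\mu_2$ makes $(1+\beta)^2\mu_1-4\beta<0$ at $\beta=\beta_{\mathrm{opt}}(\mu_2)$, placing $\mu_1$ strictly inside the complex regime for that same $\beta$, so $\max\mathcal{S}_{\mu_1}(\beta_{\mathrm{opt}}(\mu_2))=\sqrt{\beta_{\mathrm{opt}}(\mu_2)\,\mu_1}<\sqrt{\beta_{\mathrm{opt}}(\mu_2)\,\mu_2}=1-\sqrt{1-\mu_2}$.

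For Part~2 ($\mu\ge1$) I would first note that complex roots are impossible, since they require $\mu<4\beta/(1+\beta)^2\le1$; thus the roots are real for every $\beta$. I would then evaluate $p(\lambda)=\lambda^2-(1+\beta)\mu\lambda+\beta\mu$ at $\pm\sqrt{\mu}$, obtaining the clean identities $p(\pm\sqrt{\mu})=\mu(1\mp\sqrt{\mu})(1+\beta)$, whose product is $\mu^2(1-\mu)(1+\beta)^2\le0$. Hence one of $p(\sqrt{\mu}),p(-\sqrt{\mu})$ is $\le0$, and since $p$ opens upward this places $\sqrt{\mu}$ or $-\sqrt{\mu}$ between the real roots, forcing $g(\beta)\ge\sqrt{\mu}$. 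Equality needs a root exactly at $\pm\sqrt{\mu}$, i.e.\ $1+\beta=0$ when $\mu>1$, and $\beta=-1$ indeed produces the roots $\pm\sqrt{\mu}$, giving both the value and uniqueness.

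I expect the main obstacle to be the real-regime ruling-out in Part~1: proving that no real-root configuration beats the balanced complex-conjugate value $1-\sqrt{1-\mu}$. This is where genuine bookkeeping is needed — tracking the sign of $\beta$, the sign of $1+\beta$, and which root dominates $g$, together with the monotonicity estimate for $\lambda_{\max}$. By contrast, the boundary algebra, the ``moreover'' inequality, and all of Part~2 reduce to short computations once the right quantities (the factorization of the discriminant root, and $p(\pm\sqrt{\mu})$) are written down.
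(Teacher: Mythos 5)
Your proposal is correct, and while Part~1 follows the same skeleton as the paper's proof, your treatment of the tails and of Part~2 is genuinely different and in places cleaner. For Part~1 you and the paper both split along the sign of the discriminant, minimize $\sqrt{\beta\mu}$ over the complex interval $[\beta_1,\beta_2]$, and establish strict decrease of the dominant root on $(-1,\beta_1)$ via the same derivative computation (rewriting the discriminant as $(1+\beta-\tfrac{2}{\mu})^2+\tfrac{4}{\mu}(1-\tfrac{1}{\mu})$ and using $\mu<1$). Where you diverge is in ruling out the outer real regimes: the paper proves monotonicity of $\max\mathcal{S}_\mu$ on $[\beta_2,\infty)$ and on $(-\infty,-1)$, tracking which root dominates, whereas you invoke Vieta's formula $|\lambda_1\lambda_2|=|\beta|\mu$ to get the one-line bounds $g(\beta)\ge\sqrt{\beta\mu}>1+\sqrt{1-\mu}$ for $\beta>\beta_2$ and $g(\beta)\ge\sqrt{|\beta|\mu}\ge\sqrt{\mu}>1-\sqrt{1-\mu}$ for $\beta\le-1$; this avoids the case bookkeeping entirely. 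For Part~2 your argument is different in kind: instead of the paper's monotonicity analysis ($g_1'\ge 0$ on $[-1,\infty)$, decrease on $(-\infty,-1)$), you evaluate $p(\pm\sqrt{\mu})=\mu(1\mp\sqrt{\mu})(1+\beta)$, note the product $\mu^2(1-\mu)(1+\beta)^2\le 0$, and use upward convexity to trap $\pm\sqrt{\mu}$ between the roots. This buys you a direct lower bound and, for $\mu>1$, a genuine uniqueness proof — something the paper's non-strict inequality $g_1'\ge 0$ does not actually deliver. Your "moreover" argument (the discriminant for $\mu_1$ at $\beta_{\rm opt}(\mu_2)$ equals $(1+\beta)^2(\mu_1-\mu_2)<0$, placing $\mu_1$ strictly in the complex regime) is the same idea as the paper's nesting $\beta_1(\mu_1)<\beta_1(\mu_2)<\beta_2(\mu_2)<\beta_2(\mu_1)$, just computed directly. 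One caveat worth recording: at $\mu=1$ the quadratic factors as $(\lambda-1)(\lambda-\beta)$, so every $\beta\in[-1,1]$ attains the minimum value $1$ and the uniqueness claim of Part~2 fails there; your restriction "when $\mu>1$" is in fact the honest scope of the uniqueness statement, an edge case the paper's statement and proof both gloss over.
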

\begin{proof}
We first consider $0<\mu<1$. Denote $\Delta =(1+\beta)^2\mu^2-4\beta\mu=\mu^2\big((1+\beta)^2-4\beta/\mu\big)$ with $\mu\neq 0$. Note that $\lambda$ in \cref{eq:g1-postive-mu} might be a real or complex number. Thus, we consider the following two cases. \\
{\bf Complex eigenvalues}: If $(1+\beta)^2-4\beta/\mu\leq 0$, then $\Delta \leq 0$. Moreover, $\mathcal{S}_{\mu}=\{\sqrt{\beta \mu}\}$. It follows that
\begin{equation}\label{eq:complex-minimization-form}
  \min_{\beta \in \mathbb{R}} \max\mathcal{S}_{\mu}(\beta)=\min_{\beta \in \mathbb{R}}\sqrt{\beta \mu},
\end{equation}
when
\begin{equation}\label{eq:constrain-beta-mu}
 d(\beta):=\beta^2+ (2-\frac{4}{\mu})\beta +1\leq 0.
\end{equation}
The two roots of $d(\beta)=0$ are
\begin{equation}\label{eq:complex-domain-ends}
 \beta_1=\frac{1-\sqrt{1-\mu}}{1+\sqrt{1-\mu}},\,\, \beta_2=\frac{1+\sqrt{1-\mu}}{1-\sqrt{1-\mu}}.
\end{equation}
\yh{Note that $\beta_1$ and $\beta_2$ are functions of $\mu$. When we want to emphasize the dependence on the variable  $\mu$, we will write $\beta_1(\mu)$ and $\beta_2(\mu)$.} Moreover, we rewrite \cref{eq:complex-minimization-form} and \cref{eq:constrain-beta-mu} as
$
\displaystyle   \min_{\beta\in[\beta_1,\beta_2]} \sqrt{\beta \mu} =\sqrt{\beta_1\mu} = 1-\sqrt{1-\mu}.
$\\
{\bf Real eigenvalues}: When $\beta <\beta_1$ or $\beta>\beta_2$, the eigenvalues $\lambda$ are real. We claim that $\max \mathcal{S}_{\mu}(\beta)$ is decreasing over $(-\infty, \beta_1]$ and increasing over $[\beta_2,\infty)$.
Note that for  $-1<\beta <\beta_1$ or $\beta>\beta_2$,
\begin{equation*}
  \max \mathcal{S}_{\mu} =\frac{(1+\beta)\mu + \sqrt{(1+\beta)^2\mu^2-4\beta\mu}}{2}=:g_1(\beta),
\end{equation*}
and
$\displaystyle g_1(\beta) =\frac{(1+\beta)\mu + \mu \sqrt{\beta^2+(2-\frac{4}{\mu})\beta+1}}{2} =\frac{(1+\beta)\mu + \mu\sqrt{d(\beta)}}{2}$.

When $\beta> \beta_2$, $d(\beta)$ is increasing. It follows that $g_1(\beta)$ is increasing over $[\beta_2,\infty)$. We claim that $g_1(\beta)$ is decreasing when $\beta \in [-1,\beta_1)$.  In fact,
\begin{eqnarray*}
  g'_1(\beta) = \frac{\mu}{2}\Big(1+\frac{1+\beta-\frac{2}{\mu}}{\sqrt{(1+\beta)^2-\frac{4\beta}{\mu}}}\Big)
   &=&\frac{\mu}{2}\frac{\sqrt{(1+\beta-\frac{2}{\mu})^2+\frac{4}{\mu}(1-\frac{1}{\mu})}+1+\beta-\frac{2}{\mu}}{\sqrt{(1+\beta)^2-\frac{4\beta}{\mu}}}  \\
   &<&\frac{\mu}{2} \frac{\sqrt{(1+\beta-\frac{2}{\mu})^2}+1+\beta-\frac{2}{\mu}}{{\sqrt{(1+\beta)^2-\frac{4\beta}{\mu}}}} = 0,
\end{eqnarray*}
where the last equality is due to $-1<\beta<1$ and $0<\mu<1$.

For $\beta<-1$,
$\displaystyle   \max \mathcal{S}_{\mu}(\beta) =\frac{-(1+\beta)\mu + \sqrt{(1+\beta)^2\mu^2-4\beta\mu}}{2}$.
It is clear that $\max \mathcal{S}_{\mu}(\beta)$ is decreasing  over $(-\infty, -1)$.

Combining the above two cases, we know that $|g_1(\beta)|$ is decreasing over $(-\infty,\beta_1]$ and increasing over $[\beta_1,\infty)$. Thus, $\displaystyle \min_{\beta\in\mathbb{R}}\max \mathcal{S}_{\mu}(\beta)=1-\sqrt{1-\mu}$ if and only if $\beta=\beta_1$.

We now prove the second statement. From \cref{eq:complex-domain-ends} and \yh{the fact that $\beta_1(\mu)\beta_2(\mu)=1$, we know that for any given $\mu_1$ and $\mu_2$ such that $0<\mu_1<\mu_2<1$,}
\begin{equation*}
  \beta_1(\mu_1) < \beta_1(\mu_2) <\beta_2(\mu_2)<\beta_2(\mu_1).
\end{equation*}
It follows that for $\mu_1$, when $\beta\in [\beta_1(\mu_2),\beta_2(\mu_2)]$, the corresponding $\lambda$ in \cref{eq:AA-eig-form} is a complex number. Thus, \begin{equation*}
\max \mathcal{S}_{\mu_1}\big(\beta_1(\mu_2)\big) =\sqrt{\beta_1(\mu_2)\mu_1} <\sqrt{\beta_1(\mu_2)\mu_2}=\min_{\beta}\max \mathcal{S}_{\mu_2}(\beta),
\end{equation*}
which is the desired result.

Now we consider $\mu\geq 1$. Recall $\Delta =(1+\beta)^2\mu^2-4\beta\mu=\mu^2\big((1+\beta)^2-4\beta/\mu\big)$. We claim that $\Delta\geq 0$. For $\beta<0$, this is obvious. When $\beta\geq 0$, $\Delta = \mu^2\big((1-\beta)^2+4\beta(1-\frac{1}{\mu})\big)\geq0$. \yh{This means that the roots of \cref{eq:AA-eig-form} are real.}
When $\beta<-1$,
\begin{equation*}
  \max \mathcal{S}_{\mu}(\beta) =\frac{-(1+\beta)\mu + \sqrt{(1+\beta)^2\mu^2-4\beta\mu}}{2}.
\end{equation*}
It is clear that $\max \mathcal{S}_{\mu}(\beta)$ is decreasing  over $(-\infty, -1)$.

When $\beta\geq -1$,
$\displaystyle   \max \mathcal{S}_{\mu}(\beta) =g_1(\beta)$,
and we know
\begin{eqnarray*}
  g'_1(\beta) = \frac{\mu}{2}\Big(1+\frac{1+\beta-\frac{2}{\mu}}{\sqrt{(1+\beta)^2-\frac{4\beta}{\mu}}}\Big)
   &=&\frac{\mu}{2}\frac{\sqrt{(1+\beta-\frac{2}{\mu})^2+\frac{4}{\mu}(1-\frac{1}{\mu})}+1+\beta-\frac{2}{\mu}}{\sqrt{(1+\beta)^2-\frac{4\beta}{\mu}}}\\
   &\geq&\frac{\mu}{2}\frac{\sqrt{(1+\beta-\frac{2}{\mu})^2}+1+\beta-\frac{2}{\mu}}{\sqrt{(1+\beta)^2-\frac{4\beta}{\mu}}}   \geq 0.
\end{eqnarray*}
This means that $g_1(\beta)$ is increasing over $[-1,\infty)$. Thus,
\begin{equation*}
\max_{\beta\in \mathbb{R}} \mathcal{S}_{\mu}(\beta) =\mathcal{S}_{\mu}(\beta=-1)=\sqrt{\mu}.
\end{equation*}
\end{proof}

For negative eigenvalues $\mu$ of  $q'$ we have a similar result as presented in \cref{minmax-real-nonnegative-case}.
\begin{lemma}\label{minmax-real-negative-case}
Assume $\mu<0$. Then
\begin{equation}\label{eq:lemma-result-negative}
  \min_{\beta \in \mathbb{R}}\max \mathcal{S}_{\mu}(\beta) =  \sqrt{1-\mu}-1,
\end{equation}
\yh{where the unique optimal $\beta$ is given by
$\displaystyle \beta_{\rm opt}(\mu)= \frac{1-\sqrt{1-\mu}}{1+\sqrt{1-\mu}}<0.$
Moreover, given any $\mu_1$ and $\mu_2$ such that  $\mu_2<\mu_1<0$, we have}
\begin{equation}\label{eq:AA-SD-negative}
  \min_{\beta \in \mathbb{R}}\max \mathcal{S}_{\mu_1}(\beta)<\min_{\beta \in \mathbb{R}}\max \mathcal{S}_{\mu_2}(\beta),
  \quad \textrm{and} \quad
  \max \mathcal{S}_{\mu_1}\big(\beta_{\rm{opt}}(\mu_2)\big)< \min_{\beta\in \mathbb{R}}\max \mathcal{S}_{\mu_2}(\beta).
\end{equation}
\end{lemma}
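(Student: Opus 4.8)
The plan is to mirror the proof of \cref{minmax-real-nonnegative-case}, carefully tracking the sign changes caused by $\mu<0$. I would start from the discriminant $\Delta=\mu^2 d(\beta)$ with $d(\beta)=\beta^2+(2-\frac{4}{\mu})\beta+1$. Since $\mu<0$ gives $2-\frac{4}{\mu}>2$, the upward parabola $d$ has two real roots whose product is $1$ and whose sum is negative; hence both are negative, and writing $s=\sqrt{1-\mu}>1$ they are exactly $\beta_1=\frac{1-s}{1+s}\in(-1,0)$ and $\beta_2=\frac{1+s}{1-s}<-1$, with $\beta_1\beta_2=1$ and $\beta_1=\beta_{\rm opt}(\mu)$. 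On the complex-eigenvalue interval $[\beta_2,\beta_1]$ (where $d\le 0$) the two roots of \cref{eq:AA-eig-form} are conjugate, so $\max\mathcal{S}_\mu(\beta)=\sqrt{\beta\mu}$; since $\mu<0$ this is decreasing in $\beta$, so its minimum over $[\beta_2,\beta_1]$ is attained at $\beta_1$, and a short computation using $\mu=1-s^2$ gives $\beta_1\mu=(s-1)^2$, i.e.\ the candidate optimal value $\sqrt{1-\mu}-1$.

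Next I would treat the real-eigenvalue regions $\beta<\beta_2$ and $\beta>\beta_1$. There $\sqrt{\Delta}=|\mu|\sqrt{d(\beta)}$, so the two roots are $\frac{\mu}{2}\big((1+\beta)\pm\sqrt{d(\beta)}\big)$, and using $\max(|a-b|,|a+b|)=|a|+|b|$ one obtains the clean expression $\max\mathcal{S}_\mu(\beta)=\frac{-\mu}{2}\big(|1+\beta|+\sqrt{d(\beta)}\big)$. On $(\beta_1,\infty)$ we have $1+\beta>0$ and $d'(\beta)=2(1+\beta)-\frac{4}{\mu}>0$, so this expression is increasing; hence its infimum on $[\beta_1,\infty)$ is its boundary value, which matches $\sqrt{1-\mu}-1$ by continuity. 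On $(-\infty,\beta_2)$ we have $1+\beta<0$ and, since the vertex of $d$ lies in $(\beta_2,-1)$, $d'(\beta)<0$, so $\max\mathcal{S}_\mu$ is decreasing there and bounded below by its boundary value $\sqrt{1-\mu}+1>\sqrt{1-\mu}-1$. Combining the three regions shows the global minimum is $\sqrt{1-\mu}-1$, attained uniquely at $\beta_1=\beta_{\rm opt}(\mu)$.

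For the two inequalities in \cref{eq:AA-SD-negative}, the first is immediate from the closed form: $\mu\mapsto\sqrt{1-\mu}-1$ is strictly decreasing, so $\mu_2<\mu_1<0$ forces $\min_{\beta}\max\mathcal{S}_{\mu_1}<\min_{\beta}\max\mathcal{S}_{\mu_2}$. For the second, I would establish a nesting of the complex-eigenvalue intervals. Writing $\beta_1(\mu)=-1+\frac{2}{s+1}$ with $s=\sqrt{1-\mu}$ shows $\beta_1(\mu)$ is increasing in $\mu$, and $\beta_2=1/\beta_1$ then yields $\beta_2(\mu_1)<\beta_2(\mu_2)<-1<\beta_1(\mu_2)<\beta_1(\mu_1)<0$ for $\mu_2<\mu_1<0$. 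In particular $\beta_{\rm opt}(\mu_2)=\beta_1(\mu_2)$ lies in the complex-eigenvalue interval $[\beta_2(\mu_1),\beta_1(\mu_1)]$ of $\mu_1$, so $\max\mathcal{S}_{\mu_1}\big(\beta_1(\mu_2)\big)=\sqrt{\beta_1(\mu_2)\,\mu_1}$; since $\beta_1(\mu_2)<0$ and $\mu_1>\mu_2$, this is strictly smaller than $\sqrt{\beta_1(\mu_2)\,\mu_2}=\min_{\beta}\max\mathcal{S}_{\mu_2}$, which is the desired result.

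The main subtlety I anticipate is the sign bookkeeping for $\mu<0$ and confirming that the complex-branch minimizer $\beta_1$ is genuinely the global minimizer; this rests on the reformulation $\max\mathcal{S}_\mu(\beta)=\frac{-\mu}{2}\big(|1+\beta|+\sqrt{d(\beta)}\big)$ together with continuity of $\max\mathcal{S}_\mu$ across $\beta_1$ and $\beta_2$. Pleasantly, the real branches are here more transparent than in the nonnegative case, because $d'(\beta)$ has a definite sign on each of $(-\infty,\beta_2)$ and $(\beta_1,\infty)$, so no rationalization of the derivative is needed. The remaining care lies entirely in the comparison inequality, where the nesting $\beta_2(\mu_1)<\beta_2(\mu_2)<\beta_1(\mu_2)<\beta_1(\mu_1)$ must be verified so that $\beta_{\rm opt}(\mu_2)$ indeed lands on the complex branch of $\mu_1$.
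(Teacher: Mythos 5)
Your proposal is correct and follows essentially the same route as the paper: the paper's proof is a brief sketch noting that for $\mu<0$ the eigenvalues are complex on $(\beta_2,\beta_1)$, that $\max\mathcal{S}_\mu(\beta)$ decreases on $(-\infty,\beta_1)$ and increases on $(\beta_1,\infty)$, and that the comparison inequality follows as in case 1 of \cref{minmax-real-nonnegative-case} via the nesting of the complex-eigenvalue intervals. Your writeup simply fills in those details (with the clean reformulation $\max\mathcal{S}_\mu(\beta)=\tfrac{-\mu}{2}\big(|1+\beta|+\sqrt{d(\beta)}\big)$ replacing the paper's derivative computation), and all the sign bookkeeping checks out.
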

\begin{proof}
The proof is similar to case 1 of \cref{minmax-real-nonnegative-case}. In fact, when $\mu<0$, $\lambda$ is complex for $\beta\in(\beta_2,\beta_1)$ and $\lambda$ is real for $\beta\in(-\infty,\beta_2)$ and $(\beta_1,\infty)$. Moreover, $\max\mathcal{S}_{\mu}(\beta)$ is decreasing over $(-\infty, \beta_1)$ and increasing over $(\beta_{1},\infty)$.  Thus $\min_{\beta \in \mathbb{R}}\max \mathcal{S}_{\mu}(\beta)$ is obtained at $\beta=\beta_1$.
\end{proof}

Let $\rho_{q'}=\rho(q')$ (where, as before, $q'$ is always evaluated at $x^*$, and the spectral radius excludes the eigenvalues 1 that result from the degeneracy of the Hessian at $x^*$ \cite{Uschmajew2012}).
In the following, we assume that $q$ is a convergent operator, that is, $\rho_{q'}<1$. It follows that $\alpha>0$.
If one wants to minimize the spectral radius of $T$ and $q'$ has both positive and negative eigenvalues, \cref{minmax-real-nonnegative-case,minmax-real-negative-case} can be combined, by considering the largest and smallest eigenvalues of $q'$. This will be done in \cref{Thm:positive-negative-AA-SD}.
In the case of SD, however, when the step length $\alpha \in(0,\frac{1}{L}]$, all the eigenvalues of $q'$ are nonnegative, and based on just
\cref{minmax-real-nonnegative-case} we can obtain a known result for sAA(1)-SD, i.e., the stationary version of Nesterov's method, as follows.
\begin{theorem}\label{thm:psd-SD-case}
\yh{Let $x^*$ be a fixed point of iteration (\ref{eq:fixed-point}).}
For any given step length $\alpha \in(0,\frac{1}{L}]$ in SD, we denote the spectral radius of $q'=I-\alpha H$ in \cref{eq:SD-form} as $\rho_{q'}=1-\alpha \ell$.
Then, for the sAA(1)-SD method \yh{(\ref{system-form}) with Jacobian matrix $T$ defined in (\ref{eq:matrix-step1}),} the optimal asymptotic convergence factor is given  by
\begin{equation}\label{eq:cs-SD-bound}
  \min_{\beta\in\mathbb{R}}\rho\big(T(x^{*};\beta)\big) =  1-\sqrt{1-\rho_{q'}} < (\rho_{q'})^{1.75},
\end{equation}
\yh{where the unique optimal $\beta$ is given by
$\displaystyle \beta_{\rm opt}= \frac{1-\sqrt{1-\rho_{q'}}}{1+\sqrt{1-\rho_{q'}}}.$}
Moreover, the best result for $\alpha\in(0,\frac{1}{L}]$ is achieved at $\alpha=\frac{1}{L}$, and
\begin{equation*}
 1-\sqrt{1-\rho_{q'}} = 1-\sqrt{\frac{\ell}{L}}=:\rho^{(1/L)}_{\rm sAA(1)-SD}, \quad \textrm{with} \quad \beta=\frac{1-\sqrt{\ell/L}}{1+\sqrt{\ell/L}}.
\end{equation*}
\end{theorem}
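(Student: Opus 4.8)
The plan is to reduce everything to \cref{thm:conv} and \cref{minmax-real-nonnegative-case}. By \cref{thm:conv}, the asymptotic convergence factor of the sAA(1)-SD iteration \cref{system-form} equals $\rho\big(T(q'(x^*);\beta)\big)$, so the task is to evaluate $\min_{\beta\in\mathbb{R}}\rho(T)$. The eigenvalues of $T$ in \cref{eq:matrix-step1} are exactly the roots $\lambda_{1,2}$ of the quadratic \cref{eq:AA-eig-form} as $\mu$ ranges over $\sigma(q')$, so that
\[
\rho(T;\beta)=\max_{\mu\in\sigma(q')}\max\mathcal{S}_{\mu}(\beta),
\]
where the maximum is taken over the nonzero-mode eigenvalues consistent with the modified definition of $\rho$. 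First I would locate the relevant spectrum: with $q'=I-\alpha H$ and $\alpha\in(0,\tfrac1L]$, every such eigenvalue $\mu=1-\alpha\lambda_H$ lies in $[1-\alpha L,\,1-\alpha\ell]\subseteq[0,1)$, so all $\mu$ fall under the nonnegative case of \cref{minmax-real-nonnegative-case}, and the largest is $\mu_{\max}=1-\alpha\ell=\rho_{q'}$.

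The crux is that a single $\beta$ must control the whole spectrum at once, and I would handle this using the monotonicity clause of \cref{minmax-real-nonnegative-case}. Choosing $\beta^{\ast}=\beta_{\rm opt}(\rho_{q'})$, that clause gives, for every $\mu<\rho_{q'}$,
\[
\max\mathcal{S}_{\mu}(\beta^{\ast})<\min_{\beta}\max\mathcal{S}_{\rho_{q'}}(\beta)=1-\sqrt{1-\rho_{q'}}=\max\mathcal{S}_{\rho_{q'}}(\beta^{\ast}),
\]
so the outer maximum over $\mu$ is attained at the largest eigenvalue and $\rho(T;\beta^{\ast})=1-\sqrt{1-\rho_{q'}}$. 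For the matching lower bound I would note that for any $\beta$ one has $\rho(T;\beta)\ge\max\mathcal{S}_{\rho_{q'}}(\beta)\ge\min_{\beta}\max\mathcal{S}_{\rho_{q'}}(\beta)=1-\sqrt{1-\rho_{q'}}$; uniqueness of the minimizer follows from the uniqueness of $\beta_{\rm opt}(\rho_{q'})$ asserted in \cref{minmax-real-nonnegative-case}, and substituting $\mu=\rho_{q'}$ into the formula for $\beta_{\rm opt}(\mu)$ yields the stated optimal $\beta$. I expect this simultaneous-control step to be the main obstacle: a priori, decreasing $|\lambda|$ for one eigenvalue could inflate it for another, turning the task into a genuine minimax over the full spectrum. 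The content of the monotonicity statement is precisely that there is no such tradeoff — the optimizer for the worst (largest) eigenvalue automatically dominates all smaller ones — so the infinite-looking minimax collapses to the single scalar problem at $\mu=\rho_{q'}$.

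The remaining assertions are routine once this reduction is in place. For the optimal step length, since $1-\rho_{q'}=\alpha\ell$ the optimal factor equals $1-\sqrt{\alpha\ell}$, which is strictly decreasing in $\alpha$ on $(0,\tfrac1L]$ and is therefore minimized at the endpoint $\alpha=\tfrac1L$, giving $1-\sqrt{\ell/L}$ together with the corresponding $\beta$ obtained by setting $\rho_{q'}=1-\ell/L$ in $\beta_{\rm opt}$; the restriction $\alpha\le\tfrac1L$ is exactly what keeps all $\mu\ge0$ so that \cref{minmax-real-negative-case} is not needed. Finally, the bound $1-\sqrt{1-\rho_{q'}}<(\rho_{q'})^{1.75}$ is a scalar inequality in $t=\rho_{q'}$ that I would verify by elementary one-variable analysis (studying the sign of $t^{1.75}-(1-\sqrt{1-t})$ on the range of $\rho_{q'}$ produced by $\alpha\le\tfrac1L$); it simply packages the qualitative point that acceleration replaces the per-step factor $\rho_{q'}$ by one beating a superlinear power of it, with the gain most pronounced in the ill-conditioned regime $\rho_{q'}\to1$.
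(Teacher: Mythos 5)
Your proposal is correct and follows essentially the same route as the paper's own (two-sentence) proof: for $\alpha\in(0,\tfrac{1}{L}]$ all eigenvalues of $q'$ are nonnegative, so \cref{minmax-real-nonnegative-case} applies and the minimax over the whole spectrum collapses to the scalar problem at the largest eigenvalue $\rho_{q'}$. The details you spell out --- invoking \cref{thm:conv}, using the monotonicity clause $\max\mathcal{S}_{\mu}\big(\beta_{\rm opt}(\rho_{q'})\big)<\min_{\beta}\max\mathcal{S}_{\rho_{q'}}(\beta)$ for $\mu<\rho_{q'}$ to rule out any tradeoff between eigenvalues, the matching lower bound, uniqueness of $\beta_{\rm opt}$, and the monotone decrease of $1-\sqrt{\alpha\ell}$ in $\alpha$ --- are precisely what the paper's proof leaves implicit.

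One caveat, concerning the single step that both you and the paper leave unverified: the inequality $1-\sqrt{1-\rho_{q'}}<(\rho_{q'})^{1.75}$ in \cref{eq:cs-SD-bound} is \emph{not} true on the whole range of $\rho_{q'}$ permitted by the hypotheses. For instance, at $\rho_{q'}=0.4$ one has $1-\sqrt{0.6}\approx 0.2254$ while $0.4^{1.75}\approx 0.2012$, so the inequality fails; it holds only for $\rho_{q'}$ above roughly $0.49$. Since $\rho_{q'}=1-\alpha\ell$ can be as small as $1-\ell/L$, this threshold is violated for well-conditioned problems (e.g., $\bar{\kappa}=1.9$ with $\alpha=1/L$ gives $\rho_{q'}\approx 0.474$ and the bound fails). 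So the ``elementary one-variable analysis'' you defer to would in fact uncover a restriction to the ill-conditioned regime rather than confirm the stated bound unconditionally; this is a defect of the theorem statement itself (on which the paper's proof is silent), not of your reduction, but you should not present that inequality as routine.
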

\begin{proof}
Note that when $\alpha \in (0,\frac{1}{L}]$, all the eigenvalues of $q'$ are nonnegative. According to  \cref{minmax-real-nonnegative-case}, the convergence factor of \cref{system-form} is determined by the largest eigenvalue of $q'$, which equals the spectral radius of $q'$.
\end{proof}

Note that \cite{NesterovBook,odonoghue2015adaptive,scieur2016regularized} have studied the choice of $\alpha=\frac{1}{L}$, with results consistent with the special case in \cref{thm:psd-SD-case}. The choice $\alpha=\frac{1}{L}$ is the best choice in $(0,\frac{1}{L}]$, but \cref{Thm:positive-negative-AA-SD} shows that a better convergence factor can be obtained when $\alpha$ is chosen optimally in $(\frac{1}{L},\infty]$.
\begin{theorem}\label{Thm:positive-negative-AA-SD}
\yh{Let $x^*$ be a fixed point of iteration (\ref{eq:fixed-point}).}
For the sAA(1) method \yh{(\ref{system-form}) with Jacobian matrix $T$ defined in (\ref{eq:matrix-step1}),} applied to
$q_{SD}$ with $\alpha>0$, $\beta\in\mathbb{R}$, the optimal asymptotic convergence factor is given by
\begin{equation}\label{eq:improved-SD-opt-cs}
  \rho^{*}_{\rm sAA(1)-SD}=\min_{\alpha>0,\beta\in\mathbb{R}}\rho(T(x^{*};\beta)) = \frac{\sqrt{3\bar{\kappa}+1}-2}{\sqrt{3\bar{\kappa}+1}}= 1-\sqrt{\frac{4\ell}{3L+\ell}}<1-\sqrt{\frac{\ell}{L}},
\end{equation}
\yh{where the unique optimal $\alpha$ and $\beta$ are given by}
\begin{equation}\label{eq:SD-opt-alpha-beta}
\alpha^* =\frac{4}{3L+\ell}, \quad  \beta^{*}=\frac{1-\sqrt{\alpha^* \ell}}{1+\sqrt{\alpha^* \ell}}=\frac{\sqrt{3\bar{\kappa}+1}-2}{\sqrt{3\bar{\kappa}+1}+2}.
\end{equation}
\end{theorem}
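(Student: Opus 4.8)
The plan is to work, as in the proofs of \cref{minmax-real-nonnegative-case,minmax-real-negative-case}, with the eigenvalue equation \cref{eq:AA-eig-form}: the eigenvalues $\lambda$ of $T$ are the roots \cref{eq:g1-postive-mu} associated with each $\mu \in \sigma(q')$, so $\rho(T(\beta)) = \max_{\mu}\max\mathcal{S}_{\mu}(\beta)$. For $q_{SD}$ we have $q'=I-\alpha H$ by \cref{eq:SD-jac}, so as $\lambda$ ranges over the nonzero spectrum $[\ell,L]$ of $H$, the relevant eigenvalue $\mu=1-\alpha\lambda$ ranges over $[\mu_-,\mu_+]$ with $\mu_+=1-\alpha\ell$ and $\mu_-=1-\alpha L$. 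The first step is therefore to reduce the maximization over this whole interval to the two endpoints. Restricting to $\beta>0$ (the case $\beta\le 0$ is easily seen to be suboptimal from \cref{minmax-real-nonnegative-case}, since it forces $\max\mathcal{S}_{\mu_+}(\beta)\ge\mu_+$), I would show that $\mu\mapsto\max\mathcal{S}_{\mu}(\beta)$ is increasing in $\mu$ on $(0,1)$ and increasing in $|\mu|$ on the negative side: on the positive side this follows by combining the complex-regime value $\sqrt{\beta\mu}$ with the real-regime expression for $g_1$ and checking both are increasing in $\mu$; on the negative side, where $\beta>0$ and $\mu<0$ force real roots, the formula $\max\mathcal{S}_{\mu}(\beta)=\tfrac12\big(-(1+\beta)\mu+\sqrt{(1+\beta)^2\mu^2-4\beta\mu}\big)$ is manifestly increasing as $\mu\downarrow$. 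Since $\max\mathcal{S}_{0}(\beta)=0$, the maximum is attained at $\mu_+$ or $\mu_-$, giving $\rho(T(\beta))=\max\!\big(f_+(\beta),f_-(\beta)\big)$ with $f_\pm(\beta):=\max\mathcal{S}_{\mu_\pm}(\beta)$.

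Next I would perform the $\beta$-minimization for fixed $\alpha$, treating only $\alpha\in(1/L,1/\ell)$ since \cref{thm:psd-SD-case} already covers $\alpha\le 1/L$ with best value $1-\sqrt{\ell/L}$ at $\alpha=1/L$. For such $\alpha$ we have $\mu_-<0<\mu_+<1$. The key observation is that $\max(f_+,f_-)\ge f_+\ge \min_{\beta}f_+=1-\sqrt{1-\mu_+}$ by \cref{minmax-real-nonnegative-case}, with equality at $\beta=\beta_{\rm opt}(\mu_+)$ \emph{provided} $f_-(\beta_{\rm opt}(\mu_+))\le 1-\sqrt{1-\mu_+}$. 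Hence, as long as this inequality holds, the optimal weight for that $\alpha$ is exactly $\beta_{\rm opt}(\mu_+)$ and $\rho(\alpha)=1-\sqrt{1-\mu_+}=1-\sqrt{\alpha\ell}$, which is strictly decreasing in $\alpha$. The strategy is therefore to increase $\alpha$ (improving $\rho$) until the constraint $f_-(\beta_{\rm opt}(\mu_+))\le 1-\sqrt{1-\mu_+}$ becomes tight; the minimizing $\alpha^*$ is the value at which equality holds, and there $f_+=f_-$ (equioscillation) with $\beta^*=\beta_{\rm opt}(\mu_+)$, i.e. $\mu_+$ sits exactly on the complex/real boundary.

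I would then solve the tightness equation explicitly. At $\beta=\beta_{\rm opt}(\mu_+)$ the boundary relation gives $\mu_+=\tfrac{4\beta}{(1+\beta)^2}$, hence $\rho=\sqrt{\beta\mu_+}=\tfrac{2\beta}{1+\beta}$ and $\rho=1-\sqrt{1-\mu_+}$. Writing $\nu=-\mu_->0$ and squaring the real-regime identity $\rho=\tfrac12\big((1+\beta)\nu+\sqrt{(1+\beta)^2\nu^2+4\beta\nu}\big)$ yields $\rho^2=\nu\big(\rho(1+\beta)+\beta\big)$; substituting $\rho=\tfrac{2\beta}{1+\beta}$ collapses the bracket to $\rho(1+\beta)+\beta=3\beta$, so $\nu=\rho^2/(3\beta)=\mu_+/3$, i.e. $-\mu_-=\mu_+/3$. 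Combining this with the $\alpha$-constraint $1-\mu_-=\bar{\kappa}(1-\mu_+)$ (which follows from $1-\mu_\pm=\alpha\ell,\alpha L$) gives $\mu_+=\tfrac{3(\bar{\kappa}-1)}{3\bar{\kappa}+1}$, whence $1-\mu_+=\tfrac{4}{3\bar{\kappa}+1}$ and $\rho^*=1-\sqrt{1-\mu_+}=1-\tfrac{2}{\sqrt{3\bar{\kappa}+1}}=\tfrac{\sqrt{3\bar{\kappa}+1}-2}{\sqrt{3\bar{\kappa}+1}}$. Back-substituting $\alpha^*=(1-\mu_+)/\ell=\tfrac{4}{3L+\ell}$ and $\beta^*=\beta_{\rm opt}(\mu_+)=\tfrac{\sqrt{3\bar{\kappa}+1}-2}{\sqrt{3\bar{\kappa}+1}+2}$ recovers \cref{eq:improved-SD-opt-cs,eq:SD-opt-alpha-beta}. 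Finally I would verify $\alpha^*\in(1/L,1/\ell)$ (equivalent to $\ell<L$) so the working regime is consistent, and that $\rho^*<1-\sqrt{\ell/L}$.

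I expect the main obstacle to be the endpoint reduction of the first paragraph, since the sign of $\beta$ dictates whether $\mu_+$ is in the complex or real regime and one must check monotonicity in $\mu$ across that transition; the second delicate point is confirming global optimality, namely that for $\alpha>\alpha^*$ — where the constraint fails and the optimal $\beta$ moves to the genuine crossing $f_+=f_-$ with $\beta<\beta_{\rm opt}(\mu_+)$ — the value $\rho(\alpha)$ does not drop below $\rho^*$. For this I would argue that $\rho(\alpha)$ is nondecreasing for $\alpha>\alpha^*$ and use the boundary behavior ($\alpha\to 1/L^+$ recovers the value $1-\sqrt{\ell/L}$ of \cref{thm:psd-SD-case} by continuity, while $\alpha\to\infty$ forces $\rho\to\infty$ since $|\mu_-|\to\infty$), so that the interior stationary point $\alpha^*$ is the global minimizer; the remaining algebra is routine once these monotonicity facts are in place.
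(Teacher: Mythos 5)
Your core strategy is the same as the paper's: reduce $\rho(T)$ to the two extreme eigenvalues $\mu_\pm=1-\alpha L,\,1-\alpha\ell$ of $q'_{SD}$ (the paper does this via the ``Moreover'' parts of \cref{minmax-real-nonnegative-case,minmax-real-negative-case}, you via monotonicity in $\mu$ for fixed $\beta>0$ --- equivalent in effect), then impose the equioscillation condition $1-\sqrt{\alpha\ell}=\max\mathcal{S}_{\mu_-}\bigl(\beta_{\rm opt}(\mu_+)\bigr)$, which is exactly the paper's \cref{eq:-negative-positive-same-rate}. Where you genuinely improve on the paper is the algebra of the third paragraph: exploiting the double-root structure at $\beta_{\rm opt}(\mu_+)$ to get $\rho=\tfrac{2\beta}{1+\beta}$ and then $\rho^2=\nu\bigl(\rho(1+\beta)+\beta\bigr)=3\beta\nu$, hence the clean equioscillation identity $-\mu_-=\mu_+/3$, is shorter and more illuminating than the paper's route of reducing \cref{eq:-negative-positive-same-rate} to the quadratic $(3\ell L+\ell^2)\alpha^2-(3L+5\ell)\alpha+4=0$ and selecting the admissible root. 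I verified this chain; it correctly recovers $\alpha^*=4/(3L+\ell)$, $\beta^*$, and $\rho^*$ of \cref{eq:improved-SD-opt-cs,eq:SD-opt-alpha-beta}.

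The genuine gap is in your last paragraph, the global-optimality step for $\alpha>\alpha^*$. You propose to show that $\rho(\alpha):=\min_\beta\max(f_+,f_-)$ is nondecreasing on $(\alpha^*,\infty)$ and then invoke boundary behavior ($\alpha\to 1/L^+$ and $\alpha\to\infty$) to conclude that the interior equioscillation point is the global minimizer. As stated this is not a proof: boundary values plus the existence of one interior critical point do not exclude a better value elsewhere, and the monotonicity of the crossing value $f_+(\beta_c(\alpha))=f_-(\beta_c(\alpha))$ in $\alpha$ --- while true, as far as I can check numerically --- is precisely the nontrivial content you would have to establish; note that $f_+$ shifts \emph{down} and $f_-$ shifts \emph{up} as $\alpha$ grows, so the behavior of their crossing is not obvious, and the cheap lower bounds $\rho(\alpha)\ge 1-\sqrt{\alpha\ell}$ (decreasing) and $\rho(\alpha)\ge\sqrt{\alpha L}-1$ (too small until $\alpha$ is large) do not close the window just above $\alpha^*$. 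The paper avoids this claim entirely by an explicit case analysis: for $\alpha\in[\tfrac{2}{L+\ell},\tfrac{1}{\ell})$ it runs a second equioscillation anchored on the \emph{negative} extreme eigenvalue, obtaining the candidate value $\sqrt{4L/(L+3\ell)}-1$, which it shows exceeds $1-\sqrt{4\ell/(3L+\ell)}$, and for $\alpha\ge\tfrac{1}{\ell}$ (all eigenvalues of $q'$ nonpositive, a range your proposal never treats separately) it shows the best achievable factor is $\sqrt{L/\ell}-1$. To complete your argument you would either have to prove the claimed monotonicity of the crossing value, or fall back on a range-by-range analysis of the kind the paper performs.
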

\begin{proof}
From  \cref{minmax-real-nonnegative-case,minmax-real-negative-case}, we only need to consider the extreme eigenvalues of $q'$ to minimize the spectral radius of $T$. Based on positive or negative eigenvalues of $q'$ that determine the spectral radius of $q'$, we divide the discussion into four cases.

{\bf Case 1}: $\alpha \in(0, \frac{1}{L}]$. According to  \cref{thm:psd-SD-case}, the optimal convergence factor is $1-\sqrt{\frac{\ell}{L}}$, for the step length choice $\alpha=\frac{1}{L}$.

{\bf Case 2}: $\alpha \in[\frac{1}{\ell},\infty)$. Note that all eigenvalues of $q'$ are nonpositive. Based on \cref{eq:AA-SD-negative}, we need to choose $\alpha$ to maximize $1-\alpha L$. It follows  $\alpha=\frac{1}{\ell}$. Then, according to \cref{eq:lemma-result-negative} in  \cref{minmax-real-negative-case}, the optimal convergence factor is achieved at $\beta=\frac{1-\sqrt{1-\mu}}{1+\sqrt{1-\mu}}$, where $\mu=1-\frac{L}{\ell}$, given by $\sqrt{1-(1-\frac{L}{\ell})}-1=\sqrt{\frac{L}{\ell}}-1$, which is larger than $1-\sqrt{\frac{\ell}{L}}$ given in case 1.

{\bf Case 3}: $\alpha\in [\frac{1}{L},\frac{2}{L+\ell}]$. Let $\mu_+=1-\alpha \ell>0$ and $\mu_-=1-\alpha L\leq0$. Then, $\sigma(q')\subseteq[\mu_-,\mu_+]$. Moreover, $|1-\alpha L|=\alpha L-1 <1-\alpha \ell$.

From \cref{minmax-real-nonnegative-case,minmax-real-negative-case}, we know
\begin{equation*}
\min_{\beta\in\mathbb{R}} \max\mathcal{S}_{\mu_+}(\beta)=1-\sqrt{1-(1-\alpha \ell)}=1-\sqrt{\alpha \ell}>\min_{\beta\in\mathbb{R}} \max \mathcal{S}_{\mu_-}(\beta)=\sqrt{\alpha L}-1.
\end{equation*}
Here, the optimal bound for $\mathcal{S}_{\mu_+}$, $1-\sqrt{\alpha \ell}$, is obtained for
\begin{equation}\label{eq:beta-opt}
\beta(\alpha)=\frac{1-\sqrt{\alpha \ell}}{1+\sqrt{\alpha \ell}}.
\end{equation}
Minimizing $\displaystyle\max(\mathcal{S}_{\mu_-}\cup\mathcal{S}_{\mu_+})$ over $\beta$
then requires us to choose $\alpha\in [\frac{1}{L},\frac{2}{L+\ell}]$ that minimizes
$1-\sqrt{\alpha \ell}$ using $\beta(\alpha)$ as in (\ref{eq:beta-opt}), but making sure
not to exceed $\max \mathcal{S}_{\mu_-}(\beta(\alpha))$.
Therefore, we seek $\alpha$ such that
\begin{equation}\label{eq:-negative-positive-same-rate}
  1-\sqrt{\alpha \ell} =\max \mathcal{S}_{\mu_-}(\beta(\alpha)).
\end{equation}
By an easy calculation, the right-hand side of \cref{eq:-negative-positive-same-rate} can be written as
\begin{equation*}
  \max \mathcal{S}_{\mu_-}(\beta(\alpha)) =\frac{-(1+\beta(\alpha))\mu_{-} + \sqrt{(1+\beta(\alpha))^2\mu_{-}^2-4\beta\mu_{-}}}{2}.
\end{equation*}

Simplifying \cref{eq:-negative-positive-same-rate} leads to
$ (3\ell L+\ell^2)\alpha^2-(3L+5\ell)\alpha+4=0$,
whose two roots are
$\displaystyle
  \alpha_1=\frac{4}{3L+\ell},  \,\, \alpha_2=\frac{1}{\ell}$.
It is obvious that $\displaystyle \frac{1}{L}<\alpha_1<\frac{2}{L+\ell}$. $\alpha_2>\frac{2}{L+\ell}$. Thus the optimal parameters are
$\displaystyle
\alpha^*=\frac{4}{3L+\ell}, \,\,\beta^{*}=\frac{1-\sqrt{\alpha^{*}\ell}}{1+\sqrt{\alpha^{*}\ell}},
$
and
\begin{equation*}
  \min_{\alpha \in [\frac{1}{L},\frac{2}{L+\ell}],\beta\in\mathbb{R}}\max \mathcal{S}_{\mu_-}(\alpha,\beta)=1-\sqrt{\alpha^*\ell}=1-\sqrt{\frac{4\ell}{3L+\ell}}<1-\sqrt{\frac{\ell}{L}}.
\end{equation*}
Note that
\begin{eqnarray*}
  1-\sqrt{\frac{4\ell}{3L+\ell}} &=& \frac{1-4\ell/(3L+\ell)}{1+\sqrt{4\ell/(3L+\ell)}}
   = \frac{3(\bar{\kappa}-1)}{3\bar{\kappa}+1+2\sqrt{3\bar{\kappa}+1}}
   =\frac{\sqrt{3\bar{\kappa}+1}-2}{\sqrt{3\bar{\kappa}+1}},
\end{eqnarray*}
and
\begin{eqnarray*}
  \beta^{*}=\frac{1-\sqrt{\alpha^{*}\ell}}{1+\sqrt{\alpha^{*}\ell}} &=& \frac{1-\sqrt{4\ell/(3L+\ell)}}{1+\sqrt{4\ell/(3L+\ell)}}
   = \frac{\sqrt{3\bar{\kappa}+1}-2}{\sqrt{3\bar{\kappa}+1}+2}.
\end{eqnarray*}
{\bf Case 4}: $\alpha \in \big[\frac{2}{L+\ell},\frac{1}{\ell}\big)$.  Let $\eta_+ =1-\alpha \ell>0$ and $\eta_{-}=1-\alpha L<0$. Note that $\sigma(q')\subset [\eta_-,\eta_+]$ and $\rho_{q'}=-\eta_-=\alpha L-1$.  Recall that $\max \mathcal{S}_{\eta_-}\big(\beta_{\rm opt}(\eta_-)\big)$ is increasing over $\big[\beta_{\rm opt}(\eta_-),0\big)$ and $\mathcal{S}_{\eta_+}(\beta)$ is decreasing over $[\beta_{\rm opt}(\eta_-),0)$ with
\begin{equation}
\max \mathcal{S}_{\eta_+}(\beta) =\frac{(1+\beta)\eta_{+} +\sqrt{(1+\beta)^2\eta^2_{+}-4\beta\eta_{+}}}{2},
\end{equation}
and $\max \mathcal{S}_{\eta_-}(\beta)>\max \mathcal{S}_{\eta_+}(\beta).$
Note that when $\alpha \in[\frac{4}{L+\ell+2\sqrt{\ell L}},\frac{1}{\ell}]=:I_1$,
\begin{equation*}
 \min_{\beta} \max \mathcal{S}_{\eta_-}(\beta)> \min_{\beta} \max \mathcal{S}_{\eta_+}(\beta).
 \end{equation*}
Minimizing  $\displaystyle\max(\mathcal{S}_{\eta_-}\cup\mathcal{S}_{\eta_+})$ is equivalent to finding $\alpha\in I_1$ such that
\begin{equation*}
  \min_{\beta} \max\mathcal{S}_{\eta_-}(\beta)=\sqrt{1-\eta_-}-1 =\max\mathcal{S}_{\eta_+}(\beta^*),
\end{equation*}
where $\beta^*=\frac{1-\sqrt{1-\eta_-}}{1+\sqrt{1-\eta_-}}=\frac{1-\sqrt{\alpha \ell}}{1+\sqrt{\alpha \ell}}$. An
easy calculation then shows that $\alpha$ satisfies
$(3\ell L+L^2)\alpha^2-(5L+3\ell)\alpha+4=0$,
with roots
$\displaystyle
  \alpha_1=\frac{4}{L+3\ell},  \,\, \alpha_2=\frac{1}{L}$.
It is obvious that $\frac{2}{L+\ell}<\alpha_1<\frac{1}{\ell}$ and $\alpha_1\in I_1$, and $\alpha_2<\frac{2}{L+\ell}$. Thus the optimal parameters are
\begin{equation*}
\alpha^*=\frac{4}{L+3\ell}, \,\,\beta^{*}=\frac{1-\sqrt{\alpha^{*}\ell}}{1+\sqrt{\alpha^{*}\ell}}.
\end{equation*}
Furthermore,
\begin{equation*}
\min_{\alpha \in I_1,\beta\in\mathbb{R}} \max\mathcal{S}_{\eta_-}(\beta)=\sqrt{1-\eta_-}-1=\sqrt{\alpha^* L}-1=\sqrt{\frac{4L}{L+3\ell}}-1>1-\sqrt{\frac{4\ell}{3L+\ell}}.
\end{equation*}
When $\alpha \in\big[\frac{2}{L+\ell},\frac{4}{L+\ell+2\sqrt{L\ell}}\big]$, although $\min_{\beta}\max \mathcal{S}_{\eta_-}(\beta)$ is less than $\sqrt{\alpha^* L}-1$, $\max \mathcal{S}_{\eta_+}(\beta)$ is decreasing with respect to both $\alpha$ and $\beta$. Thus, when $\alpha \in\big[\frac{2}{L+\ell},\frac{4}{L+\ell+2\sqrt{L\ell}}\big]$, $\displaystyle\min \max(\mathcal{S}_{\eta_-}\cup\mathcal{S}_{\eta_+})$ is larger than $\sqrt{\alpha^* L}-1$.

Combining the above four cases, we can conclude that the optimal convergence factor is achieved in case 3.
\end{proof}
\begin{remark}
Note that the optimal parameters of \cref{Thm:positive-negative-AA-SD} confirm the optimal parameters given
in \cite{lessard2016analysis} (without proof) for Nesterov's accelerated gradient descent. 
\end{remark}

\subsection{Lower bound on optimal asymptotic convergence factor of sAA(1) \yh{applied to fixed-point methods with complex Jacobian spectrum}}
\yh{We now consider fixed-point methods with complex Jacobian spectrum.
We assume $\rho_{q'}<1$ at $x^*$, which is true for  ALS , see \cite{Uschmajew2012}.
The complex eigenvalues make it more difficult to analyze the convergence factor of sAA(1).}
Thus, we only give a lower bound in \cref{thm:lower-bound-ALS} on the optimal asymptotic convergence factor under the
condition that $\rho_{q'}=\mu$ for some eigenvalue of $q'$.
Interestingly, for the case of ALS applied to canonical tensor decomposition, all our numerical tests (with randomized and real-world data) show that this condition is always satisfied and the lower bound is always achieved, so we formulate conjectures on this that may be provable based on the special structure of the canonical tensor decomposition Hessian (see \cite{acar2009optimizationHess}), but remain a topic of further research.
\begin{theorem}[lower bound for fixed-point methods with complex Jacobian spectrum]\label{thm:lower-bound-ALS}
 \yh{
 Let $x^*$ be a fixed point of iteration (\ref{eq:fixed-point}).
 Let the spectral radius of $q'(x^*)$ be $\rho_{q'}$. Assume that there exists a real eigenvalue $\mu$ of $q'(x^*)$ such that $\rho_{q'}=\mu$. Then the optimal asymptotic convergence factor of the sAA(1) method (\ref{system-form}) with Jacobian matrix $T$ defined in (\ref{eq:matrix-step1})
is bounded below by}
\begin{equation}\label{eq:lower-bound-ALS}
\yh{ \min_{\beta \in\mathbb{R}} \rho(T(x^{*};\beta))} \geq  1-\sqrt{1-\rho_{q'}}=:\rho_p,
\end{equation}
and if the equality holds, then \yh{the unique optimal $\beta$ is given by}
\begin{equation}\label{opt-beta-real}
  \beta_{\rm opt}=\frac{1-\sqrt{1-\rho_{q'}}}{1+\sqrt{1-\rho_{q'}}}.
\end{equation}

\begin{proof}
Since $\rho_{q'} \in \sigma(q')$,
\begin{equation*}
 \min_{\beta \in\mathbb{R}} \rho(T(x^{*};\beta))= \min_{\beta \in\mathbb{R}}\max\big(\cup_{\mu\in\sigma(q')} \mathcal{S}_{\mu}(\beta)\big)\geq \min_{\beta \in\mathbb{R}}\max\mathcal{S}_{\rho_{q'}}(\beta).
\end{equation*}
Based on $0<\rho_{q'}<1$ and  \cref{eq:lemma-result} in  \cref{minmax-real-nonnegative-case}, we have
\begin{equation*}
   \min_{\beta \in\mathbb{R}}\max\mathcal{S}_{\rho_{q'}}(\beta)=1-\sqrt{1-\rho_{q'}},
\end{equation*}
with $\beta$ given in \cref{minmax-real-nonnegative-case}.
This is the desired result.
\end{proof}

\end{theorem}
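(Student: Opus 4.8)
The plan is to exploit the fact that the spectral radius of $T$ decomposes over the spectrum of $q'$, and then to isolate the single real eigenvalue $\mu=\rho_{q'}$ so that the bound falls out of the real-spectrum analysis already established in \cref{minmax-real-nonnegative-case}. First I would record the clean identity coming from the block structure of $T$ in \cref{eq:matrix-step1}: each eigenvalue $\lambda$ of $T$ pairs with some $\mu\in\sigma(q')$ through the quadratic \cref{eq:AA-eig-form}, and conversely each $\mu$ contributes exactly the two roots whose moduli make up $\mathcal{S}_\mu(\beta)$. Hence, for every fixed $\beta$, $\rho(T(x^{*};\beta))=\max_{\mu\in\sigma(q')}\max\mathcal{S}_\mu(\beta)$.

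The lower bound then follows from a monotonicity-of-maximum argument. Since $\rho_{q'}$ is assumed to be a genuine \emph{real} element of $\sigma(q')$, restricting the outer maximum to this one eigenvalue can only decrease it, so $\rho(T(x^{*};\beta))\ge\max\mathcal{S}_{\rho_{q'}}(\beta)$ for every $\beta\in\mathbb{R}$. Taking the infimum over $\beta$ on both sides preserves the inequality, giving $\min_\beta\rho(T)\ge\min_\beta\max\mathcal{S}_{\rho_{q'}}(\beta)$. Because $q$ is convergent we have $0<\rho_{q'}<1$, so Case~1 of \cref{minmax-real-nonnegative-case} applies verbatim with $\mu=\rho_{q'}$ and evaluates the right-hand side to $1-\sqrt{1-\rho_{q'}}=\rho_p$, attained at the stated $\beta_{\rm opt}$; this is exactly \cref{eq:lower-bound-ALS}.

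For the equality and uniqueness claim I would argue as follows: if the bound is attained, then the minimizing $\beta$ must drive the $\rho_{q'}$-term down to its own minimum value $\rho_p$, since otherwise the outer maximum would strictly exceed $\rho_p$. By the uniqueness statement in \cref{minmax-real-nonnegative-case}, the only $\beta$ achieving $\max\mathcal{S}_{\rho_{q'}}(\beta)=\rho_p$ is $\beta_{\rm opt}$, which is precisely \cref{opt-beta-real}. Thus the optimal $\beta$ is forced and unique whenever equality holds.

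The hard part, however, is not this lower bound, which is essentially a one-line corollary of the lemma, but rather deciding whether equality is \emph{achieved}. Equality demands that at $\beta=\beta_{\rm opt}$ none of the complex eigenvalues $\mu\in\sigma(q')$ push $\max\mathcal{S}_\mu(\beta_{\rm opt})$ above $\rho_p$, and controlling $\mathcal{S}_\mu(\beta)$ for complex $\mu$ is exactly the obstruction flagged just before the theorem. This is why, at this level of generality, only a lower bound rather than an exact formula can be proved; establishing equality for the ALS case appears to require the special structure of the canonical tensor decomposition Hessian and is therefore left as a conjecture rather than resolved here.
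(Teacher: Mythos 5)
Your proof is correct and follows essentially the same route as the paper: you bound $\rho(T(x^{*};\beta))=\max_{\mu\in\sigma(q')}\max\mathcal{S}_{\mu}(\beta)$ from below by the term for the real eigenvalue $\mu=\rho_{q'}$, and then invoke case 1 of \cref{minmax-real-nonnegative-case} (valid since $0<\rho_{q'}<1$) to evaluate $\min_{\beta\in\mathbb{R}}\max\mathcal{S}_{\rho_{q'}}(\beta)=1-\sqrt{1-\rho_{q'}}$. The only difference is that you spell out explicitly the forcing argument for uniqueness of $\beta_{\rm opt}$ when equality holds, which the paper's proof leaves implicit by simply citing the lemma.
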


The numerical results in Section \ref{sec:num-sec} suggest the following conjectures \yh{for ALS applied to canonical tensor decomposition}:
\begin{conj}\label{conjec1-sAA-ALS}
For ALS applied to canonical tensor decomposition, there exists a real eigenvalue $\mu$ of $q'$ such that $\rho_{q'}=\mu$ (where $q'$ is
evaluated in a fixed point $x^*$, and the spectral radius excludes the eigenvalues 1 that result from the degeneracy of the Hessian
at $x^*$ \cite{Uschmajew2012}).
\end{conj}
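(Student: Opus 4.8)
The plan is to work from the block Gauss--Seidel representation of the ALS Jacobian. Writing $H=H(x^*)=M+L^{T}$, where $M$ is the lower block-triangular part of $H$ (including the block diagonal) and $L^{T}$ is the strictly upper block-triangular part (so that $L^{T}=U$ since $H$ is symmetric), \cref{eq:GS-iter} gives
\[
q'_{ALS}(x^*)=I-M^{-1}H=-M^{-1}L^{T}.
\]
An eigenpair $q'_{ALS}v=\lambda v$ is therefore equivalent to the generalized eigenproblem $Hv=(1-\lambda)Mv$; in particular $\lambda=1$ corresponds exactly to $Hv=0$, i.e.\ to the Hessian degeneracy excluded in the definition of $\rho_{q'}$ \cite{Uschmajew2012}. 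The conjecture claims that, among the remaining eigenvalues of the non-symmetric matrix $-M^{-1}L^{T}$, the one of largest modulus is real. I would prove this by inserting the explicit $N\times N$ block form of the canonical-decomposition Hessian from \cite{acar2009optimizationHess}, whose off-diagonal blocks are Hadamard products of the Gram matrices $A^{(k)\,T}A^{(k)}$, and trying to show that a dominant eigenvector of $-M^{-1}L^{T}$ can be taken real.

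The first step is the base case $N=2$, which I expect to be provable outright and to reveal the right mechanism. Here
\[
M=\begin{bmatrix}H_{11}&0\\ H_{21}&H_{22}\end{bmatrix},\qquad L^{T}=\begin{bmatrix}0&H_{12}\\ 0&0\end{bmatrix},
\]
and a direct computation shows that the nonzero eigenvalues of $q'_{ALS}(x^*)=-M^{-1}L^{T}$ are exactly those of $H_{22}^{-1}H_{12}^{T}H_{11}^{-1}H_{12}$ (using $H_{21}=H_{12}^{T}$). On the non-degenerate subspace $H_{11}$ and $H_{22}$ are symmetric positive definite and $H_{12}^{T}H_{11}^{-1}H_{12}$ is symmetric positive semidefinite, so this matrix is similar to $H_{22}^{-1/2}\bigl(H_{12}^{T}H_{11}^{-1}H_{12}\bigr)H_{22}^{-1/2}$ and hence has real, nonnegative eigenvalues. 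Thus $\rho_{q'}$ is attained at a real $\mu$, and the conjecture holds for $N=2$.

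For general $N$ and rank $r$ the goal would be to expose an analogous product-of-symmetric-operators structure on the dominant invariant subspace of $-M^{-1}L^{T}$. I would try to combine the Hadamard-product structure of the Hessian blocks with a Perron--Frobenius- or variational-type argument that isolates the dominant mode and forces it to be real, after first treating the simpler orthogonal-factor case in which the Gram matrices reduce to the identity and the block coupling simplifies considerably.

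The hard part will be exactly this last step. For $N\ge 3$ the ALS iteration matrix is genuinely non-normal and does carry complex eigenvalues --- this is precisely why it serves as the paper's ``complex spectrum'' example --- and there is no general reason for the dominant eigenvalue of a block Gauss--Seidel matrix to be real. Any proof must therefore pin down the dominant mode using problem-specific features of the tensor Hessian rather than a generic spectral argument; controlling the interaction of the many off-diagonal Hadamard blocks as $N$ and $r$ grow, and ruling out a complex-conjugate pair of strictly larger modulus than the leading real eigenvalue, is the central obstacle and the reason the statement is recorded here only as a conjecture.
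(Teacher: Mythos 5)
You should first be aware that the paper does not prove this statement at all: it is recorded as \cref{conjec1-sAA-ALS} precisely because the authors could not prove it, and the only support offered is numerical evidence (the eigenvalue plots for random tensors with $c=0.5,0.7,0.9$ and for the Claus and Enron real-data problems, where the dominant eigenvalue of $q'_{ALS}$ is observed to be real). So there is no ``paper proof'' to match your attempt against; the relevant question is whether your proposal actually closes the gap the authors left open. It does not, and you say so yourself, but the partial content is worth separating from the plan. Your $N=2$ computation is correct and complete: with $H=M+L^{T}$, the matrix $-M^{-1}L^{T}$ is block upper triangular with nonzero spectrum equal to that of $H_{22}^{-1}H_{12}^{T}H_{11}^{-1}H_{12}$, which is similar to the symmetric positive semidefinite matrix $H_{22}^{-1/2}H_{12}^{T}H_{11}^{-1}H_{12}H_{22}^{-1/2}$ (the diagonal blocks are positive definite on the non-degenerate subspace, since they are Gram--Kronecker matrices of full-column-rank factors). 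Hence for $N=2$ the whole spectrum is real and nonnegative and the conjecture holds; this is essentially the classical two-cyclic/consistently-ordered Gauss--Seidel fact, and it is a genuine rigorous statement that the paper itself does not contain.

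The gap is that $N=2$ is exactly the case in which the conjecture has no content: there the spectrum is entirely real, whereas the conjecture is interesting only because for $N\geq 3$ (the paper's experiments use $N=3$) the ALS iteration matrix has genuinely complex eigenvalues and one must rule out a dominant complex-conjugate pair. For that case your proposal offers only directions --- Hadamard-product structure of the off-diagonal Hessian blocks, a Perron--Frobenius or variational argument, an orthogonal-factor special case --- without identifying any concrete mechanism that pins the dominant mode to the real axis. None of these is developed far enough to see whether it could work; in particular, Perron--Frobenius arguments need nonnegativity or irreducibility structure that $-M^{-1}L^{T}$ has not been shown to possess, and the block triangular trick that powered your $N=2$ case produces, for $N\geq 3$, a product of blocks that is no longer similar to a symmetric matrix. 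So your attempt should be read as: a correct proof of a degenerate special case, plus an honest research program for the real problem --- which leaves the statement exactly where the paper leaves it, as a conjecture.
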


\begin{conj}\label{conjec2-sAA-ALS}
For sAA(1)-ALS for canonical tensor decomposition,
\begin{equation*}
 \min_{\beta \in\mathbb{R}} \rho(T(x^{*};\beta))=1-\sqrt{1-\rho_{q'}},
\end{equation*}
\yh{where $x^*$ is the fixed point and the Jacobian matrix $T$ of the sAA(1) method (\ref{system-form}) is defined in (\ref{eq:matrix-step1}).}
\yh{The unique optimal $\beta$ is given by \cref{opt-beta-real}.}
\end{conj}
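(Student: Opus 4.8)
The plan is to prove the claimed equality, together with the uniqueness of the optimal $\beta$, by reducing everything to the single upper bound $\rho(T(x^{*};\beta_{\rm opt})) \le \rho_p$, where $\rho_p := 1-\sqrt{1-\rho_{q'}}$ and $\beta_{\rm opt}$ is the value in \cref{opt-beta-real}. Granting \cref{conjec1-sAA-ALS} (so that $\mu=\rho_{q'}$ is a genuine real eigenvalue of $q'$), \cref{thm:lower-bound-ALS} already supplies the matching lower bound $\min_{\beta}\rho(T)\ge\rho_p$. Moreover, \cref{minmax-real-nonnegative-case} shows that $\beta_{\rm opt}$ is the \emph{unique} minimizer of $\beta\mapsto\max\mathcal{S}_{\rho_{q'}}(\beta)$ and that every other $\beta$ gives $\max\mathcal{S}_{\rho_{q'}}(\beta)>\rho_p$; since $\rho(T(\beta))=\max_{\mu\in\sigma(q')}\max\mathcal{S}_{\mu}(\beta)\ge\max\mathcal{S}_{\rho_{q'}}(\beta)$, any $\beta\ne\beta_{\rm opt}$ forces $\rho(T(\beta))>\rho_p$. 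Hence, once $\rho(T(\beta_{\rm opt}))\le\rho_p$ is proved, the equality holds and $\beta_{\rm opt}$ is the strict, unique minimizer; the entire problem collapses to this one inequality.

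Next I would reformulate the upper bound spectrally: $\rho(T(\beta_{\rm opt}))\le\rho_p$ is equivalent to $\max\mathcal{S}_{\mu}(\beta_{\rm opt})\le\rho_p$ for every $\mu\in\sigma(q')$, i.e., both roots of the characteristic equation \cref{eq:AA-eig-form} lie in the closed disk $\overline{D}_{\rho_p}=\{|\lambda|\le\rho_p\}$. The crucial step is a normalization. Substituting $\lambda=\rho_p\,w$ and $\nu=\mu/\rho_{q'}$ and using the two identities $\rho_p^2/\beta_{\rm opt}=\rho_{q'}$ and $(1+\beta_{\rm opt})\rho_{q'}/\rho_p=2$ (both immediate from \cref{opt-beta-real}), the condition $\max\mathcal{S}_{\mu}(\beta_{\rm opt})\le\rho_p$ collapses to the parameter-free requirement that both roots of
\[
  w^2-2\nu w+\nu=0
\]
lie in $\overline{D}_1$. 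Thus the upper bound is \emph{exactly} the geometric containment $\sigma(q')/\rho_{q'}\subseteq\mathcal{G}$, where $\mathcal{G}:=\{\nu\in\mathbb{C}:\text{both roots of }w^2-2\nu w+\nu\text{ lie in }\overline{D}_1\}$ is a fixed region independent of the problem data.

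I would then characterize $\mathcal{G}$ explicitly. Putting one root on the unit circle, $w=e^{i\theta}$, and solving gives the boundary curve $\nu(\theta)=e^{2i\theta}/(2e^{i\theta}-1)$; the other root then has modulus $|\nu|=1/\sqrt{5-4\cos\theta}\le1$, with equality only at $\theta=0$, so $\mathcal{G}\subset\overline{D}_1$ and $\partial\mathcal{G}$ touches the unit circle precisely at $\nu=1$. Equivalently, the Schur--Cohn conditions for $w^2-2\nu w+\nu$ cut out $\mathcal{G}$ as an explicit region in $(\operatorname{Re}\nu,\operatorname{Im}\nu)$. Note that $\nu=1$ (that is, $\mu=\rho_{q'}$) lies on $\partial\mathcal{G}$, which is exactly the tangency that produces the lower bound; everything is therefore consistent, and the whole question reduces to whether the scaled spectrum fits inside $\mathcal{G}$.

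The main obstacle --- and the reason \cref{conjec2-sAA-ALS} is stated as a conjecture --- is this final containment $\sigma(q'_{ALS})/\rho_{q'}\subseteq\mathcal{G}$. Because $\partial\mathcal{G}$ meets the unit circle only at $\nu=1$, the statement is genuinely restrictive: it forbids any complex eigenvalue of $q'_{ALS}$ whose modulus is close to $\rho_{q'}$ while lying away from the positive real axis, and more generally constrains the admissible shape of the complex spectrum near the circle $|\mu|=\rho_{q'}$. To establish it I would exploit the specific structure of $q'_{ALS}(x^{*})=I-M^{-1}(x^{*})H(x^{*})$ from \cref{eq:GS-iter}, where $M$ is the lower block-triangular part of the canonical-tensor-decomposition Hessian whose explicit block form is given in \cite{acar2009optimizationHess}. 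Concretely, I would attempt an eigenvalue localization for the block Gauss--Seidel matrix $I-M^{-1}H$ --- for instance via its field of values or a block-scaling argument --- using the near-block-diagonal positive-semidefinite structure of $H(x^{*})$ together with the scaling degeneracy responsible for the trivial eigenvalues $1$, and then check that the resulting localization region, scaled by $1/\rho_{q'}$, sits inside $\mathcal{G}$. I expect this spectral-localization step to be the crux: the preceding reductions are elementary and parameter-free, but pinning down where the complex ALS eigenvalues actually fall relative to the curve $\partial\mathcal{G}$ is precisely the structure-dependent difficulty that the numerical evidence supports and that a complete proof must resolve.
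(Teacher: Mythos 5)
Your reductions are all correct, and they are sharper than anything in the paper itself: the identities $\rho_p^2=\beta_{\rm opt}\,\rho_{q'}$ and $(1+\beta_{\rm opt})\rho_{q'}=2\rho_p$ do follow from \cref{opt-beta-real} (write $s=\sqrt{1-\rho_{q'}}$, so $\rho_p=1-s$, $\beta_{\rm opt}=(1-s)/(1+s)$, and both check out), so the substitution $\lambda=\rho_p w$, $\mu=\rho_{q'}\nu$ really does turn \cref{eq:AA-eig-form} at $\beta=\beta_{\rm opt}$ into the parameter-free quadratic $w^2-2\nu w+\nu=0$; the boundary parametrization $\nu(\theta)=e^{2i\theta}/(2e^{i\theta}-1)$ with $|\nu(\theta)|=(5-4\cos\theta)^{-1/2}$, tangent to the unit circle only at $\nu=1$, is also right; and the strict monotonicity established in the proof of \cref{minmax-real-nonnegative-case} does give uniqueness of $\beta_{\rm opt}$ once equality holds, exactly as in the paper's conditional uniqueness claim in \cref{thm:lower-bound-ALS}. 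One small point you should make explicit: the trivial eigenvalues $\mu=1$ from the Hessian degeneracy give $\nu=1/\rho_{q'}>1$, which lies outside your region $\mathcal{G}$, so the containment must be stated for $\sigma(q'_{ALS})$ with those eigenvalues excluded (they yield roots $\lambda\in\{1,\beta_{\rm opt}\}$, the $1$'s being excluded by the paper's modified spectral-radius convention, and $|\beta_{\rm opt}|<\rho_p$, so they are harmless but must be handled).

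The genuine gap is that the proposal does not prove the statement, and indeed cannot as written: the statement is a \emph{conjecture} in the paper, supported only by numerical evidence, with only the lower-bound half proven (\cref{thm:lower-bound-ALS}, which itself rests on the unproven \cref{conjec1-sAA-ALS} that you also grant). Everything in your plan up to the final step is an equivalence-preserving reformulation; the actual mathematical content — the containment $(\sigma(q'_{ALS})\setminus\{1\})/\rho_{q'}\subseteq\mathcal{G}$ — is precisely where a proof must happen, and there you offer only a research program (field-of-values or block-scaling localization of $\sigma(I-M^{-1}H)$ using the structure of the canonical tensor decomposition Hessian from \cref{eq:GS-iter}) with no argument. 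This is not a minor technicality: since $\partial\mathcal{G}$ meets the unit circle only at $\nu=1$, the containment forbids any complex eigenvalue of modulus near $\rho_{q'}$ off the positive real axis, a delicate constraint that generic FOV or Gershgorin-type localizations of block Gauss--Seidel iteration matrices are far too crude to deliver. So the honest assessment is that you have replaced the paper's conjecture by an equivalent, cleanly stated, but still open spectral-localization conjecture — a genuinely useful reformulation that the paper does not contain, but not a proof.
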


If the assumption that $\mu=\rho_{q'}$ in \cref{thm:lower-bound-ALS} does not hold, a weaker form of \cref{thm:lower-bound-ALS} may still hold, see Supplementary Materials (SM), Section \ref{subsec:weaker}.

\section{Optimal asymptotic convergence factors for stationary NGMRES}\label{sec:convergence-NGMRES}
We now consider the theoretical problem of finding the coefficients $\beta_0$ and
$\beta_1$ in sNGMRES($m$) iteration (\ref{eq:sNGMRES}) for
$m=1$ that result in the optimal asymptotic convergence factor, assuming $q'(x^*)$ is known.
As it turns out, the analysis for sNGMRES($m$) is simplified if one first considers a reduced version of the method,
where the first term in the sum, with coefficient $\beta_0$, is left out:
\begin{align}
  x_{k+1}&= q(x_k) + \sum_{i=1}^{m}\beta_{i}(q(x_k)-x_{k-i})  \qquad k=m, m+1,\ldots.
\label{eq:sNGMRES-R}
\end{align}
We call this reduced version sNGMRES-R($m$).
As explained in SM Section \ref{sub:NGMRES-NGMRES-R},
the optimal convergence factors for sNGMRES-R(1)-SD and sNGMRES(1)-SD are the
same, and the performance of optimally tuned sNGMRES($m$)-ALS cannot be worse than sNGMRES-R($m$)-ALS.

\subsection{Optimal asymptotic convergence factor of sNGMRES-R(1)
}
\label{subsec:optimal}
We first consider sNGMRES-R(1)-SD, that is
\begin{equation}\label{eq:M-NGMRES(1)}
  x_{k+1} = (1+\beta) q(x_k) -\beta x_{k-1},
\end{equation}
applied to $q_{SD}(x)$.
We rewrite the above iteration as a system
\begin{equation}\label{system-form-NGM}
\boldsymbol{y}_{k+1}=\begin{pmatrix}
x_{k+1}\\x_k
\end{pmatrix} =
\begin{pmatrix}
(1+\beta)q(x_k)  -\beta x_{k-1} \\
x_k
\end{pmatrix}
 =: \Psi_N(\boldsymbol{y}_k).
\end{equation}
Note that
\begin{equation}\label{eq:matrix-step1-NGM}
  \Psi_N'(x^*,x^*) =: T_N(x^*) =\begin{bmatrix}
(1+\beta)q'(x^*)& -\beta I\\
I  &  0
\end{bmatrix}.
\end{equation}

Optimal parameters for sNGMRES-R(1)-SD are determined as follows:
\begin{theorem}\label{thm:sNGMRE-R-SD}
For sNGMRES-R(1) acceleration of the steepest descent method as defined in \cref{system-form-NGM}, the optimal asymptotic convergence factor is given by
\begin{equation}\label{eq:improved-SD-opt-cs-NGM}
  \rho^{*}_{\rm sNGMRES-R(1)-SD}=\min_{\alpha>0,\beta\in\mathbb{R}}\rho(T_N(x^{*};\beta)) =\frac{\sqrt{\bar{\kappa}}-1}{\sqrt{\bar{\kappa}}+1} <1-\sqrt{\frac{4\ell}{3L+\ell}},
\end{equation}
\yh{where $x^*$ is the fixed point and the Jacobian matrix $T_N$ of the sNGMRES-R(1) method (\ref{system-form-NGM}) is defined in (\ref{eq:matrix-step1-NGM}).}
\yh{The unique optimal $\alpha$ and $\beta$ are given by }
\begin{equation}\label{eq:SD-opt-alpha-beta-NGM}
\alpha_N^* =\frac{2}{L+\ell}, \, \beta_N^{*}=\frac{1-\sqrt{1-\rho_{q'}^2}}{1+\sqrt{1-\rho_{q'}^2}}=\Big(\frac{\sqrt{\bar{\kappa}}-1}{\sqrt{\bar{\kappa}}+1}\Big)^2, \textrm{ where } \rho_{q'}=\frac{L-\ell}{L+\ell}.
\end{equation}
\end{theorem}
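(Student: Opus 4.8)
The plan is to reduce the spectral radius of the block matrix $T_N$ in \cref{eq:matrix-step1-NGM} to a scalar problem in the eigenvalues $\mu$ of $q'=I-\alpha H$, exactly as was done for sAA(1), and then to exploit a simplification special to the reduced NGMRES iteration. First I would repeat the computation leading to \cref{eq:AA-eig-form}: writing $T_N(u,v)^T=\lambda(u,v)^T$ gives $u=\lambda v$ and, for $v$ an eigenvector of $q'$ with eigenvalue $\mu$, the characteristic relation
\begin{equation*}
\lambda^2-(1+\beta)\mu\,\lambda+\beta=0 .
\end{equation*}
The crucial difference from the sAA(1) relation \cref{eq:AA-eig-form} is that the constant term is $\beta$ rather than $\beta\mu$. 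Since the product of the two roots equals this constant term, whenever the discriminant $(1+\beta)^2\mu^2-4\beta$ is negative the two roots are complex conjugates of common modulus $\sqrt{\beta}$, \emph{independently of} $\mu$ (and the negative discriminant forces $\beta>0$, so the square root is real). This is the observation that makes the reduced method both cleaner to analyze and strictly better than sAA(1).

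Building on this, for a fixed step length $\alpha$ I would show that the optimal $\beta$ drives the extreme eigenvalue of $q'$, i.e.\ the one with $|\mu|=\rho_{q'}$, exactly onto the real/complex boundary. The transition values solve $\mu^2(1+\beta)^2=4\beta$, whose roots $\beta_1(\mu)\le\beta_2(\mu)$ satisfy $\beta_1\beta_2=1$ and
\begin{equation*}
\beta_1(\mu)=\frac{1-\sqrt{1-\mu^2}}{1+\sqrt{1-\mu^2}} .
\end{equation*}
Because the complex interval $(\beta_1(|\mu|),\beta_2(|\mu|))$ widens as $|\mu|$ decreases, choosing $\beta=\beta_1(\rho_{q'})$ places the extreme eigenvalue on the boundary and forces every $\mu$ with $|\mu|\le\rho_{q'}$ into its complex regime; hence the whole spectrum of $T_N$ has the common modulus $\sqrt{\beta_1(\rho_{q'})}$ and $\rho(T_N)=\sqrt{\beta_1(\rho_{q'})}$. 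To confirm this value is a minimum and not merely a convenient choice, I would establish that, for the extreme eigenvalue, $\max\mathcal{S}_{\mu}(\beta)$ is decreasing on $(-\infty,\beta_1]$ and increasing on $[\beta_1,\infty)$, by the same derivative computation used in \cref{minmax-real-nonnegative-case} (and checking the endpoint value $\max\mathcal{S}_{\mu}(0)=|\mu|=\rho_{q'}>\sqrt{\beta_1(\rho_{q'})}$ to fix the sign). Note that, unlike the sAA(1) proof of \cref{Thm:positive-negative-AA-SD}, no separate treatment of positive and negative $\mu$ is needed, since both the boundary condition and the modulus $\sqrt{\beta}$ depend on $\mu$ only through $\mu^2$.

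It remains to optimize over $\alpha$. Since $\sqrt{\beta_1(\rho_{q'})}$ is an increasing function of $\rho_{q'}$, minimizing over $\alpha>0$ amounts to minimizing the SD spectral radius $\rho_{q'}=\max(|1-\alpha L|,|1-\alpha\ell|)$, which is classical: the minimum $\rho_{q'}=\frac{L-\ell}{L+\ell}$ is attained at $\alpha=\frac{2}{L+\ell}$, giving $\alpha_N^*$ in \cref{eq:SD-opt-alpha-beta-NGM}. Substituting $1-\rho_{q'}^2=\frac{4L\ell}{(L+\ell)^2}$ into $\beta_1(\rho_{q'})$ and simplifying with the factors $\sqrt{L}\pm\sqrt{\ell}$ yields $\beta_N^*=\big((\sqrt{\bar{\kappa}}-1)/(\sqrt{\bar{\kappa}}+1)\big)^2$ and the optimal factor $\rho(T_N)=\sqrt{\beta_N^*}=(\sqrt{\bar{\kappa}}-1)/(\sqrt{\bar{\kappa}}+1)$, as claimed, while the final strict inequality against the sAA(1) factor of \cref{Thm:positive-negative-AA-SD} is a routine algebraic comparison. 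The main obstacle is the monotonicity step: proving carefully that the real-root branch of $\max\mathcal{S}_{\mu}(\beta)$ — including negative $\beta$, where the two roots are real with product $\beta<0$ and opposite signs — never undercuts the complex-regime value $\sqrt{\beta_1(\rho_{q'})}$, so that the boundary choice is genuinely globally optimal in $\beta$ rather than only over the complex interval.
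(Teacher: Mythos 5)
Your proposal is correct and follows essentially the same route as the paper's own proof (given in SM Section \ref{subsec:proof4.1} via \cref{minmax-real-nonnegative-case-NGM}): the characteristic equation $\lambda^2-(1+\beta)\mu\lambda+\beta=0$, the key observation that complex roots have modulus $\sqrt{\beta}$ independent of $\mu$ so that the optimal $\beta$ places the extreme eigenvalue $|\mu|=\rho_{q'}$ on the real/complex boundary while forcing all smaller eigenvalues into the complex regime, the derivative-based monotonicity argument on the real-root branch, the reduction of the $\alpha$-optimization to the classical minimization of $\rho_{q'}=\max(|1-\alpha L|,|1-\alpha\ell|)$, and the same closing algebra. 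The only cosmetic difference is that you handle $\beta$ outside $(-1,1)$ by monotonicity over all of $\mathbb{R}$, whereas the paper dismisses $|\beta|\ge 1$ at the outset by noting the product of the two roots equals $\beta$, so the spectral radius would be at least $1$; both arguments dispose of the same cases.
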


The proof is similar to the proof of \cref{Thm:positive-negative-AA-SD} and
can be found in SM Section \ref{subsec:proof4.1}, building on a Lemma
similar to \cref{minmax-real-nonnegative-case}, \yh{for the case of fixed-point methods with real Jacobian spectrum}.
Note that the result in \cref{thm:sNGMRE-R-SD} can be derived from
\cite{MR703121} or \cite{hong2018accelerating}, see SM Section \ref{subsec:proof4.1},
but our proof is different, and our result in \cref{minmax-real-nonnegative-case-NGM}
can be used to derive optimal sNGMRES-R(1) parameters for other fixed-point methods
than SD in the case the spectrum of $q'(x^*)$ is real.

\yh{Optimal bounds for sNGMRES-R(1) applied to fixed-point methods with complex Jacobian spectrum are discussed in SM Section \ref{subsec:sNGMRES-R(1)-ALS-bounds}.
We obtain a result similar to \cref{thm:lower-bound-ALS} for sAA(1), but our numerical results show that, in the case of ALS for canonical tensor decomposition,} there is no equivalent to \cref{conjec2-sAA-ALS}. However, further lower and upper bounds are stated in \cref{thm:lowe-upper-bound}.

\subsection{Summary of optimal asymptotic convergence factors for sAA(1) and sNGMRES-R(1) acceleration of SD}
\label{subsec:optfac}

\begin{table}
 \caption{Optimal asymptotic convergence factors $\rho^*$ for SD, sAA(1)-SD with step length $\alpha=\frac{1}{L}$, sAA(1)-SD with optimal $\alpha$ as in \cref{eq:SD-opt-alpha-beta}, and sNGMRES-R(1)-SD with optimal $\alpha$ as in
 \cref{eq:SD-opt-alpha-beta-NGM}.}
\centering
\begin{tabular}{|l|c|c|c|c|}
\hline
 method                 &$\alpha$      &$\beta$   &$\rho_{q'}$   & $\rho^*$    \\
\hline
SD                       &$\frac{2}{L+\ell}$    & -   &$\frac{\bar{\kappa}-1}{\bar{\kappa}+1}$        &$\frac{\bar{\kappa}-1}{\bar{\kappa}+1}\approx 1-\frac{2}{\bar{\kappa}}$ \\
\hline
 sAA(1)-SD with $\alpha=\frac{1}{L}$         &$\frac{1}{L}$          &$\frac{\sqrt{\bar{\kappa}}-1}{\sqrt{\bar{\kappa}+1}}$   &$\frac{\bar{\kappa} -1}{\bar{\kappa}}$        & $\frac{\sqrt{\bar{\kappa}}-1}{\sqrt{\bar{\kappa}}}= 1-\frac{1}{\sqrt{\bar{\kappa}}}$\\
\hline
sAA(1)-SD with optimal $\alpha$              &$\frac{4}{3L+\ell}$          &$\frac{\sqrt{3\bar{\kappa}+1}-2}{\sqrt{3\bar{\kappa}+1}+2}$       &$\frac{3(\bar{\kappa}-1)}{3\bar{\kappa}+1}$             &$\frac{\sqrt{3\bar{\kappa}+1}-2}{\sqrt{3\bar{\kappa}+1}}\approx 1-\frac{2}{\sqrt{3} \sqrt{\bar{\kappa}}}$  \\
\hline
 sNGMRES-R(1)-SD              &$\frac{2}{L+\ell}$     &$\Big(\frac{\sqrt{\bar{\kappa}}-1}{\sqrt{\bar{\kappa}}+1}\Big)^2$       &$\frac{\bar{\kappa}-1}{\bar{\kappa}+1}$            &$\frac{\sqrt{\bar{\kappa}}-1}{\sqrt{\bar{\kappa}}+1} \approx 1-\frac{2}{\sqrt{\bar{\kappa}}}$ \\
\hline
\end{tabular}\label{comparsion-ASD-vs-NGMRESSD}
\end{table}

We summarize the optimal asymptotic convergence for sAA(1)-SD and sNGMRES-R(1)-SD in  \cref{comparsion-ASD-vs-NGMRESSD}.
The approximations of $\rho^*$ for large $\bar{\kappa}$ in the last column of \cref{comparsion-ASD-vs-NGMRESSD} show that,
as is well-known, 1-step acceleration methods are most useful for ill-conditioned
problems (large $\bar{\kappa}$): for the error reduction to reach a relative tolerance $\tau$, SD requires $O(\bar{\kappa})$ iterations, and the three 1-step acceleration methods each require $O(\sqrt{\bar{\kappa}})$ iterations.
Furthermore, the optimal sAA(1)-SD with the optimal step length
as in \cref{eq:SD-opt-alpha-beta} requires approximately $\sqrt{3}/2\approx 86.7\%$ of the iterations of the optimal sAA(1)-SD with the standard step length $1/L$, and the optimal sNGMRES-R(1)-SD needs about half the number of iterations of the optimal sAA(1)-SD with step length $1/L$,
see also \cref{acce-ratio-plot}.
\vlong{Note that the Heavy Ball Method is another 1-step acceleration method for SD that has the same $\rho^*$ as
sNGMRES-R(1)-SD \cite{recht2010cs726}.}

\section{The $m=\infty$ case: linear asymptotic GMRES convergence bounds for estimating
AA($\infty$) and NGMRES($\infty$) convergence factors}\label{sec:inf-bounds}

In this section we discuss how applying GMRES convergence bounds to fixed-point problem (\ref{eq:fixed-point}) linearized about
$x^*$ may relate to linear asymptotic convergence factors for the nonlinear AA and NGMRES iterations with window size $m=\infty$.

Linearizing fixed-point function $q(x)$ about $x^*$ gives $q(x)\approx q(x^*)+q'(x^*) \, (x-x^*)$, and
using $x^*=q(x^*)$ one obtains from $x=q(x)$ the linearized fixed-point problem
\begin{equation}
  \left(I -q'(x^*) \right)x = \left(I -q'(x^*)\right)x^*.
\label{eq:fixed-lin}
\end{equation}

Consider applying GMRES to $Ax=b$ with $A \in \mathbb{R}^{n \times n}$.
When $0 \notin \textrm{FOV}(A)$, where FOV$(A)$ is the field of values or numerical range of $A$ \cite{trefethen2005spectra},
the following property holds:
\begin{theorem}{\cite{beckermann2005some}
}\label{thm:GMRES-conv}
Define $\nu(A)=min_{z \in \textrm{FOV}(A)} |z|$ (the distance of FOV($A$) to the origin), $r(A)=max_{z \in \textrm{FOV}(A)} |z|$ (the numerical radius of $A$), and
$$\cos{\beta}=\nu(A)/r(A).$$
If $0 \notin \textrm{FOV}(A)$, then
\begin{equation}
\frac{\|r_k\|}{\|r_0\|} \le c_\beta \, \rho_\beta^k<10 \, \rho_\beta^k \quad \textrm{for any } r_0,
\end{equation}
where $\rho_\beta=2\sin{(\beta/(4-2\beta/\pi))} < \sin{\beta}$ and $c_\beta=(2+2/\sqrt{3}) \, (2+\rho_\beta)$.
\end{theorem}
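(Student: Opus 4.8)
The plan is to reduce the matrix convergence estimate to a scalar polynomial-approximation problem on the convex set $\textrm{FOV}(A)$, in two stages. First I would invoke the optimality characterization of GMRES: the $k$-th residual satisfies $r_k = p_k(A)\,r_0$, where $p_k$ ranges over polynomials of degree at most $k$ normalized by $p_k(0)=1$ and is chosen to minimize $\|p_k(A)\,r_0\|$. Hence, for \emph{any} admissible polynomial $p$,
\begin{equation*}
\frac{\|r_k\|}{\|r_0\|} \le \min_{\substack{\deg p \le k \\ p(0)=1}} \|p(A)\| .
\end{equation*}
This removes the dependence on the particular $r_0$ and turns the problem into the construction of a single good polynomial that is uniformly small on the relevant spectral region of $A$.

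The second stage passes from the operator norm $\|p(A)\|$ to a scalar sup-norm over $\textrm{FOV}(A)$. Here I would use a Crouzeix-type field-of-values bound of the form $\|p(A)\| \le (2+2/\sqrt{3})\,\max_{z\in\textrm{FOV}(A)}|p(z)|$, which is available because $\textrm{FOV}(A)$ is convex and compact; this is precisely the source of the leading constant $2+2/\sqrt{3}$ in $c_\beta$. After this reduction it remains to bound the purely scalar quantity $\min_{p}\max_{z\in\textrm{FOV}(A)}|p(z)|$ over normalized polynomials of degree $k$, a quantity that no longer sees the matrix.

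For the scalar problem I would exploit the geometry encoded by $\beta$. The supporting line of $\textrm{FOV}(A)$ at its point nearest the origin, together with the outer bound $|z|\le r(A)$, confines $\textrm{FOV}(A)$ to a sector of half-angle $\beta$ with vertex at $0$, since any $z$ in that intersection obeys $\cos\theta \ge \nu(A)/|z| \ge \nu(A)/r(A) = \cos\beta$ for the angle $\theta$ to the central direction. A crude enclosure of this sector by a disk $D(c,R)$ tangent to its bounding rays, for which $R/|c|=\sin\beta$, already yields the bound $\sin^k\beta$ via $p(z)=(1-z/c)^k$, which explains the elementary estimate $\rho_\beta<\sin\beta$. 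To obtain the sharper factor $\rho_\beta = 2\sin\!\big(\beta/(4-2\beta/\pi)\big)$ I would instead work directly on the sector: using the conformal map from the complement of the sector onto the exterior of the unit disk, I would build near-optimal Faber-type polynomials whose sup-norm on the sector decays like the $k$-th power of the reciprocal conformal radius, and the tail of the truncated Faber series would contribute the remaining factor $(2+\rho_\beta)$ in $c_\beta$.

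The main obstacle is this last step: producing the explicit conformal map of the sector and extracting the precise exponent $\beta/(4-2\beta/\pi)$, together with a clean control of the Faber-series remainder so as to reach the stated constant rather than a merely qualitative $O(\rho_\beta^k)$ bound. The geometric claim that a convex set omitting the origin sits inside a sector of half-angle $\beta$ is routine, as is the disk estimate; the delicate point is that the sector is \emph{unbounded}, so the polynomial construction must be organized around the local behavior near the vertex at the origin, and the sharpness of the exponent rests on the opening angle of the exterior conformal map rather than on any crude circular enclosure.
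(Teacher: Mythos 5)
This theorem is not proved in the paper at all: it is quoted verbatim from the cited reference \cite{beckermann2005some} (Beckermann, Goreinov, and Tyrtyshnikov), so your proposal has to be measured against the proof in that reference. Your two-stage architecture --- the GMRES minimax reduction, a passage from $\|p(A)\|$ to a sup-norm over a convex set containing $\textrm{FOV}(A)$, and then an exterior-conformal-map/Faber-polynomial estimate of the scalar Chebyshev problem --- is indeed the architecture of that proof, with one historical caveat: there the constant $2+2/\sqrt{3}$ enters as Beckermann's bound on the Faber operator $f \mapsto (Ff)(A)$ for convex $E \supseteq \textrm{FOV}(A)$, not as a ready-made Crouzeix-type spectral-set inequality (no such inequality with a constant that small was available in 2005; today you could substitute the Crouzeix--Palencia constant $1+\sqrt{2} < 2+2/\sqrt{3}$, so this stage of your plan is defensible).

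The genuine gap is that you reduce to the wrong region, and the decisive step of your plan would fail there. The closed sector with vertex at the origin \emph{contains} the point $z=0$, and every admissible polynomial satisfies $p(0)=1$; hence
\begin{equation*}
\min_{\deg p \le k,\ p(0)=1}\ \max_{z \in \textrm{sector}} |p(z)| = 1 ,
\end{equation*}
so no polynomial construction on that set can produce a decaying bound --- equivalently, the exterior conformal map sends $0$ to the unit circle, and your Faber program returns rate $1$. For the same reason your crude disk step is misstated: no disk tangent to the two bounding rays contains the sector, since the vertex lies at distance $|c| > |c|\sin\beta$ from the center (what that disk does contain is the field of values itself). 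The region on which both the crude $\sin^k\beta$ bound and the sharp bound must be run is the bounded convex set that actually traps $\textrm{FOV}(A)$ at distance $\nu(A)$ from the origin, namely the circular segment $\{|z|\le r(A)\}\cap\{\mathrm{Re}(e^{-i\phi}z)\ge \nu(A)\}$. Its two corners, with interior angle $\beta$ and exterior angle $2\pi-\beta$, are exactly what generate the stated exponent, since $\beta/(4-2\beta/\pi)=\tfrac{\pi}{2}\cdot\tfrac{\beta}{2\pi-\beta}$; a vertex-at-origin sector of half-angle $\beta$ would instead produce denominators of the form $4-4\beta/\pi$, which already shows your geometry cannot reproduce the theorem's rate. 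Since you explicitly defer this conformal/Faber step as the ``main obstacle,'' neither $\rho_\beta$ nor the factor $(2+\rho_\beta)$ is actually derived in your proposal, and as organized around the origin-vertex sector it could not be.
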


Given the linearized problem matrix $I -q'(x^*)$ from \cref{eq:fixed-lin}, its field of values
can be computed numerically, or a box that is a superset of the
field of values can easily be computed \cite{mees1979domains}.
Both approaches allow to compute a linear convergence factor $\rho_\beta$ in the bound
of \cref{thm:GMRES-conv}.
When a linear asymptotic convergence factor $\rho_\beta$ is in hand for linearized problem
(\ref{eq:fixed-lin}), it is reasonable to expect that this convergence factor may also
be relevant for the asymptotic convergence behavior of AA and NGMRES as
$x_k \rightarrow x^*$. We are not aware of a proof that this is indeed the case,
so we formulate the conjecture below on a local linear convergence bound for
AA($\infty$) and NGMRES($\infty$). Our numerical tests in Section \ref{sec:num-sec}
are consistent with this conjecture.

\begin{conj}\label{conjec:inf}
Consider GMRES($\infty$) applied to linearized fixed-point problem (\ref{eq:fixed-lin}) with fixed point $x^*$.
If the GMRES residuals satisfy
$$
\frac{\|r_k\|}{\|r_0\|}  \le c_1 \rho^k \quad \textrm{for any } r_0,
$$
then the nonlinear residuals of applying NGMRES($\infty$) and AA($\infty$) to the nonlinear fixed-point iteration (\ref{eq:fixed-point})
associated with (\ref{eq:fixed-lin}) satisfy
$$
\frac{\|r_k\|}{\|r_0\|}  \le c_2 \rho^k,
$$
provided $x_0$ is chosen such that the nonlinear methods converge to $x^*$, and $x_0$ is chosen sufficiently close to $x^*$.
\end{conj}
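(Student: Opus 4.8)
The plan is to treat nonlinear AA($\infty$) and NGMRES($\infty$) as higher-order perturbations of linear GMRES applied to the linearized problem \cref{eq:fixed-lin}, and to propagate the hypothesized linear bound through a bootstrapping induction on the error $e_k := x_k - x^*$. The natural starting point is the linear equivalence recalled in \cref{sec:intro}: when $q$ is the affine map $q(x) = (I - q'(x^*))x + q'(x^*)x^*$, both AA($\infty$) and NGMRES($\infty$) reproduce exactly the GMRES iterates for \cref{eq:fixed-lin}, so the assumed bound $\|r_k\|/\|r_0\| \le c_1 \rho^k$ holds verbatim there. The whole task is to show that the curvature of $q$ perturbs this behaviour only by terms that are harmless once $x_0$ is close enough to $x^*$ and the iterations are known (by hypothesis) to converge.

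First I would quantify the nonlinearity. Using the smoothness of $q$ near $x^*$ (with locally Lipschitz $q'$, as holds for $q_{SD}$ and $q_{ALS}$), a Taylor expansion gives $q(x) = x^* + q'(x^*)\,e + R(e)$ with $\|R(e)\| \le \tfrac{1}{2} M \|e\|^2$ locally, so the AA residual \cref{eq:resid} obeys $r(x_k) = A\,e_k + O(\|e_k\|^2)$ with $A = I - q'(x^*)$, and likewise $g(x_k) = H(x^*)\,e_k + O(\|e_k\|^2)$ for NGMRES. Consequently every difference vector spanning the least-squares subspace in \cref{eq:Andersonbetas} (respectively \cref{eq:NGMRESbetas}) equals its exact linear counterpart plus an $O(\max_{j\le k}\|e_j\|^2)$ perturbation; the nonlinear least-squares problem is thus a perturbed copy of the Krylov minimization whose optimal value is the GMRES residual.

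The core of the argument is an inductive estimate. Assuming $\|e_j\| \le c_2\,\rho^{\,j}\|e_0\|$ for all $j \le k$, I would bound $\|e_{k+1}\|$ by a linear contribution plus a perturbation contribution. The linear contribution is controlled because the minimized linearized residual in \cref{eq:Andersonbetas}/\cref{eq:NGMRESbetas} differs from the exact GMRES residual over the corresponding Krylov space by only $O(\max_j\|e_j\|^2)$, and that GMRES residual is bounded by $c_1\rho^{\,k+1}$ times the initial residual through the hypothesis (equivalently \cref{thm:GMRES-conv}); the perturbation contribution, coming both from the perturbed coefficients and from the gap between the linearized-predicted and the actual next residual, is of size $O\!\big(B\,(c_2\rho^{\,k}\|e_0\|)^2\big)$, where $B$ bounds $\sum_i |\beta_i^{(k)}|$. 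Since the linear part is $\approx c_1\rho^{\,k+1}\|e_0\|$ while the perturbation is $O(\rho^{\,2k}\|e_0\|^2)$, taking $\|e_0\|$ small and $c_2$ slightly larger than $c_1$ lets the quadratic term be absorbed and closes the induction; this is exactly the relaxation regime in which inexact Krylov methods are known to keep their convergence rate. A final conversion between $\|e_k\|$ and the residual norm, using the invertibility of $A$ on the relevant non-degenerate subspace (guaranteed by $0\notin\textrm{FOV}(A)$ in the setting of \cref{thm:GMRES-conv}), yields the stated bound with the modified constant $c_2$.

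The main obstacle, and the reason the statement is only a conjecture, is the control of the coefficient sum $B$. With infinite window the spanning set grows with $k$, and the $O(\|e\|^2)$ perturbation can in principle drive the difference vectors toward linear dependence, so that the least-squares coefficients blow up and amplify the quadratic remainders beyond what the $\rho^{2k}$ decay can absorb. A rigorous proof would therefore rest on a lemma guaranteeing that $\sum_i|\beta_i^{(k)}|$ stays bounded uniformly in $k$ as $x_k \to x^*$, in the spirit of the boundedness hypothesis used for AA in \cite{toth2015}. Establishing such a uniform bound for the infinite-window, ill-conditioning-prone perturbed Krylov minimization is precisely the gap that prevents an unconditional proof here.
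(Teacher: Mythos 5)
There is no proof of this statement in the paper to compare against: it is deliberately formulated as \cref{conjec:inf}, the authors state explicitly that they are not aware of a proof, and the only support offered is numerical (the GMRES versus AA($\infty$)/NGMRES($\infty$) convergence curves in \cref{sec:num-sec}). So your proposal must be judged on its own terms, and on those terms it is what you candidly say it is: a perturbation-plus-bootstrap strategy with a hole. Your identification of uniform control of $\sum_i|\beta_i^{(k)}|$ as a missing lemma (in the spirit of the boundedness hypothesis of \cite{toth2015}) is a genuine obstruction, and flagging it is to your credit.

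However, there is a second, more structural flaw in your sketch that you do not flag, and it would break your induction even if the bounded-coefficient lemma were granted. With window size $m=\infty$, the earliest iterates never leave the least-squares problem: the term $\beta_k^{(k)}\bigl(q(x_k)-q(x_0)\bigr)$ carries the Taylor remainder $R(e_0)$ of size $O(\|e_0\|^2)$, so the perturbation entering step $k+1$ is $O\bigl(B\,\|e_0\|^2\bigr)$, not $O\bigl(B\,\rho^{2k}\|e_0\|^2\bigr)$ as you claim — the quantity $\max_{j\le k}\|e_j\|^2$ is dominated by the \emph{oldest} errors, which do not decay with $k$. Your induction target $c_2\rho^{k+1}\|e_0\|$ decays geometrically while $\|e_0\|^2$ is a fixed constant, and the hypothesis ``$x_0$ sufficiently close to $x^*$'' is exercised once, before the iteration starts; it cannot be renewed in a $k$-dependent way, so no fixed $\|e_0\|$ satisfies $\|e_0\|^2\le\epsilon\,\rho^{k+1}\|e_0\|$ for all $k$. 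Closing the induction would therefore require not merely bounded coefficients but weights on the stale directions that vanish geometrically in $k$ (roughly $|\beta_k^{(k)}|\lesssim\rho^{k}/\|e_0\|$), i.e., a proof that infinite-window AA and NGMRES asymptotically ignore contaminated old information — a far stronger and equally unproven statement. This incompatibility between the asymptotic regime $k\to\infty$ and the perturbative regime $\|e_0\|\to 0$ is arguably the real reason the statement remains a conjecture (and why practical methods use finite windows or restarts, for which $\max_{j\in\text{window}}\|e_j\|^2=O(\rho^{2(k-m)}\|e_0\|^2)$ does decay and your bootstrap has a chance). A smaller issue: your final error-to-residual conversion invokes $0\notin\textrm{FOV}(I-q'(x^*))$, which the conjecture does not assume, and which fails without projection in the paper's own tensor application, where $I-q'(x^*)$ is singular and the authors must pass to the deflated system \cref{eq:projected}.
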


\section{Numerical experiments}\label{sec:num-sec}
In all our numerical experiments, we consider the problem of canonical tensor decomposition, see
(\ref{eq:tensorf}).
In our tests, we consider three-way cubic tensors of size 50, that is, $N=3$, $n_1=n_2=n_3 =50$.
We randomly generate data tensors $\mathcal{Z}$ following the procedures and parameters in \cite{acar2011scalable,sterck2012nonlinear,mitchell2020nesterov}.
The data tensors are composed by generating underlying rank-$r$ tensors in the format of
\cref{eq:can}, with $r=3$, to which noise is added.
We randomly generate the factor matrices $A^{(1)}, A^{(2)}$, and $A^{(3)}$ of the underlying rank-$r$ tensor
so that the collinearity of the factors in each mode is set to a particular value, $c$, see \cite{acar2011scalable,sterck2012nonlinear,mitchell2020nesterov}, \yh{given by
\begin{equation*}
c = \frac{a_i^{(m)T} a_j^{(m)}}{ \|a_i^{(m)}\|  \|a_j^{(m)}\|},
\end{equation*}
where $a^{(m)}_i$ and $a^{(m)}_j$ are the columns of factor matrices $A^{(m)} \in \mathbb{R}^{n_m \times r}$, for $i,j = 1,\ldots,r$, $m=1,\ldots,N$, see \cref{eq:tensorf,eq:can}.}
The goal is to recover these underlying factor matrices once assembled into the tensor and noise
has been added.  We set $l_1 = 1$, and $l_2 = 1$ to be the desired noise ratios
of homoscedastic and heteroscedastic noise, respectively \cite{acar2011scalable,sterck2012nonlinear}. \yh{The tensor $\mathcal{Z}$ is generated as follows. First generate an $r\times r$ matrix $K$
that has diagonal elements 1 and off-diagonal elements $c$, and compute the Cholesky
factor $C$ of $K$. Then generate 3 uniformly random $50\times r$ matrices, orthonormalize
their columns using the QR decomposition, and multiply on the right with $C$. Then
let $T_r$ be the canonical rank-$r$ tensor generated by these matrices as factor matrices. 
Two types of noise are added to $T_r$. Generate tensors $\mathcal{N}_1$ and $\mathcal{N}_2 \in \mathbb{R}^{50\times  50\times 50}$ with
elements drawn from the standard normal distribution. An intermediate tensor $\hat{\mathcal{Z}}$ is
generated as $\hat{\mathcal{Z}}= T_r + (100/l_1 -1)^{-1/2}\|T_r\|_F \mathcal{N}_1/\|\mathcal{N}_1\|_F$, and finally $\mathcal{Z}$ is obtained
as $\mathcal{Z}=\hat{\mathcal{Z}} + (100/l_2 - 1)^{-1/2} \|\hat{\mathcal{Z}}\|_F (\mathcal{N}_2\ast\hat{\mathcal{Z}})/\|\mathcal{N}_2\ast \hat{\mathcal{Z}}\|_F$, where $\ast$ denotes element-wise
multiplication. }  All numerical tests were performed in Matlab, using the Tensor Toolbox \cite{bader2015matlab} and
the Poblano Toolbox for optimization \cite{dunlavy2010poblano}.

\begin{figure}
\centering
\includegraphics[width=0.45\linewidth]{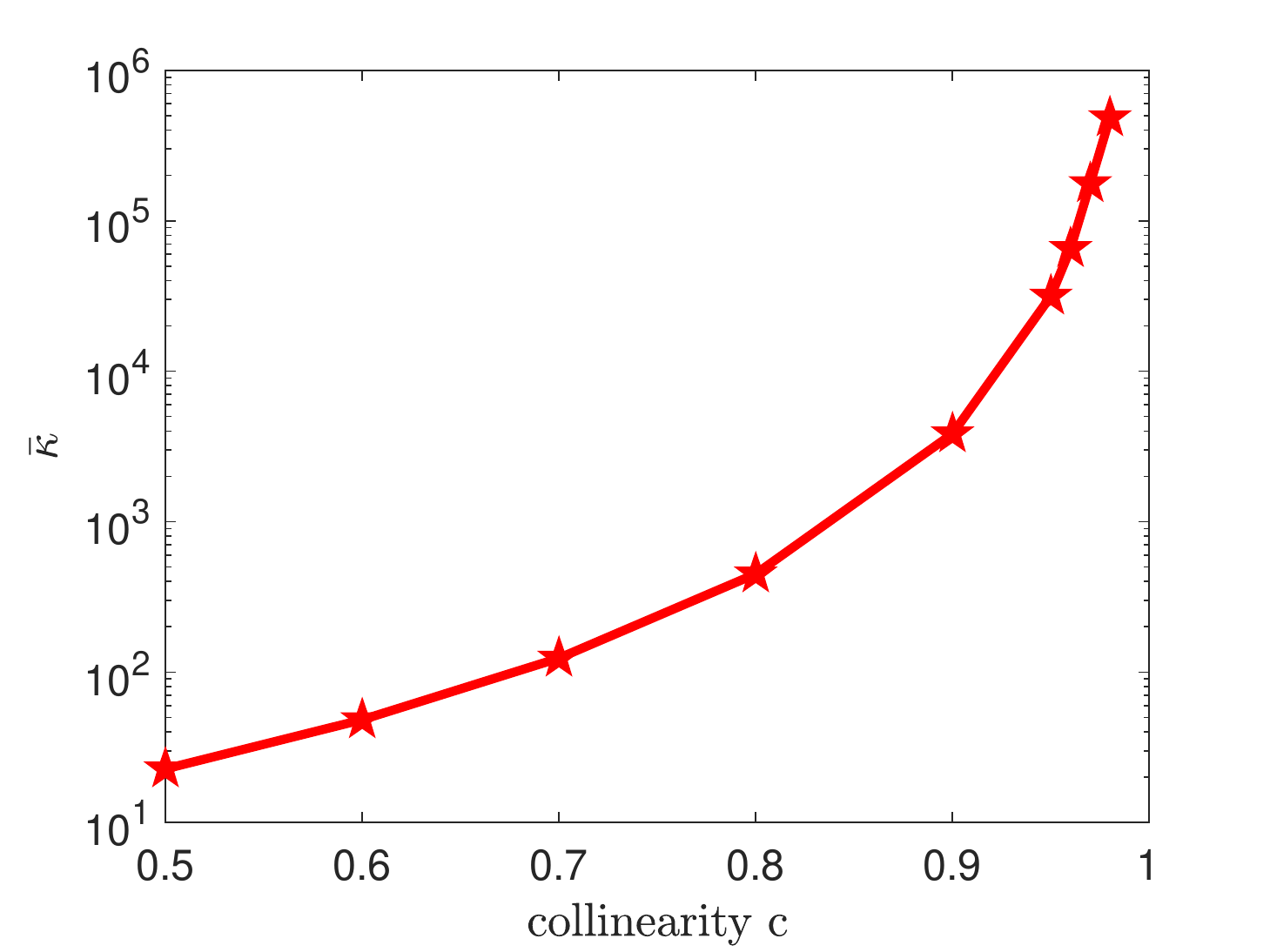}
\caption{Condition number of the Hessian at $x^*$ as a function of collinearity $c$.} \label{condi-vs-collinearity-plot}
\end{figure}
\yh{Since in our numerical tests we need to evaluate the
Hessian $H(x)$ of $f(x)$ at a local minimum, $x^{*}$, we first run our optimization methods until we obtain an
approximation of a fixed point $x^*$ where $\nabla f(x)$ vanishes up to machine accuracy.}
To get an idea of the difficulty of our test problems as a function of the collinearity parameter $c$,
we plot the (modified) condition number $\bar{\kappa}$ of $H(x^{*})$ (see \cref{eq:modified-condition-number}) as a function of $c$ in  \cref{condi-vs-collinearity-plot}. It can be observed that with increasing $c$, the condition number increases substantially.
It was known before that higher collinearity $c$ requires more iterations for ALS and other methods to converge,
but we now quantify this ill-conditioning using the modified condition number.
In the following tests we use $c = 0.5, 0.7, 0.9$ to validate our theoretical results
and to gain insight into how and by how much an effective nonlinear preconditioner like $q_{ALS}(x)$ may lead to improved
asymptotic convergence for the AA and NGMRES iterations, or, equivalently, by how much AA and NGMRES can accelerate $q_{ALS}(x)$
asymptotically.

\begin{figure}
\centering
\includegraphics[width=0.45\linewidth]{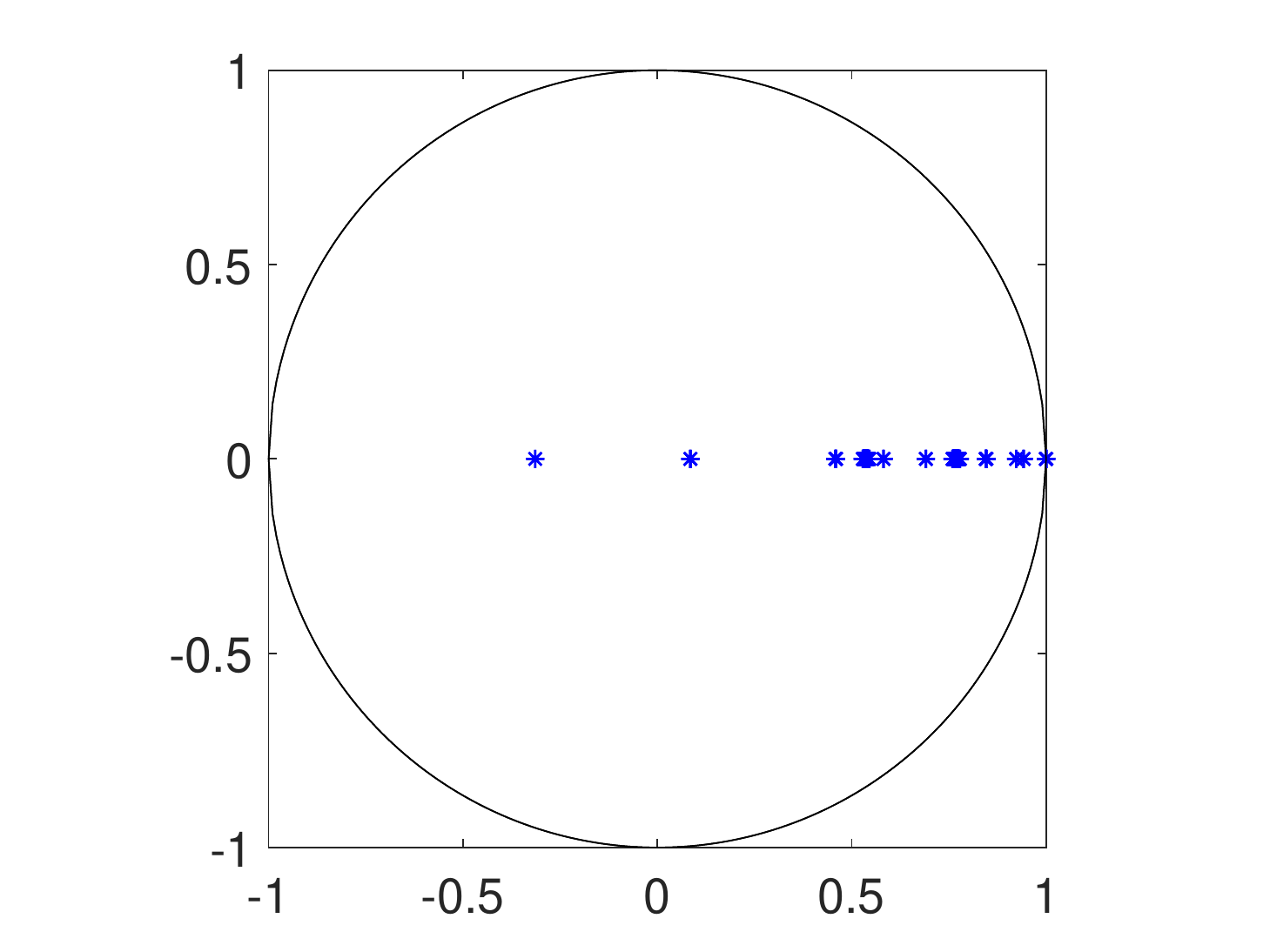}
\includegraphics[width=0.45\linewidth]{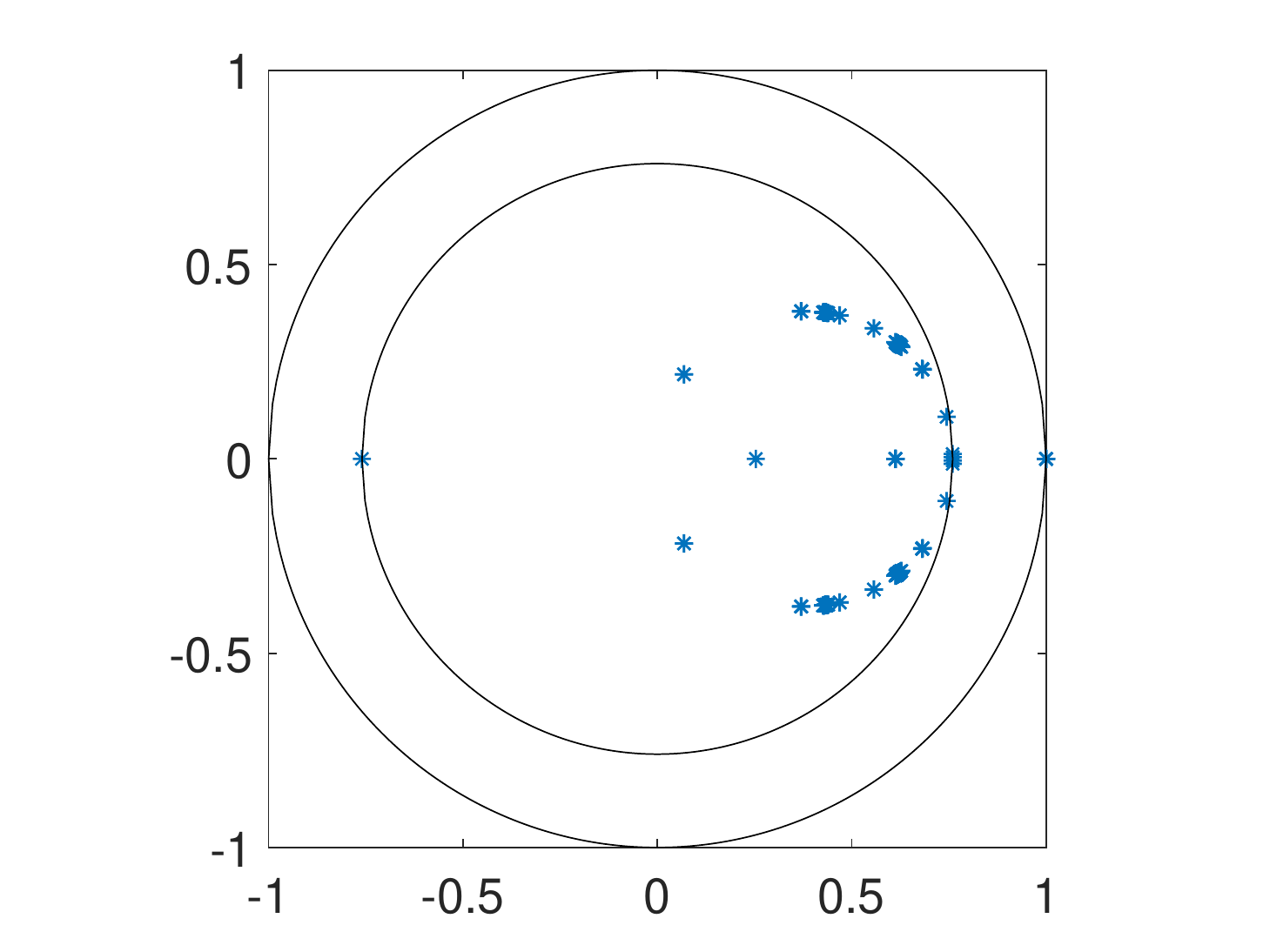}\\
\includegraphics[width=0.45\linewidth]{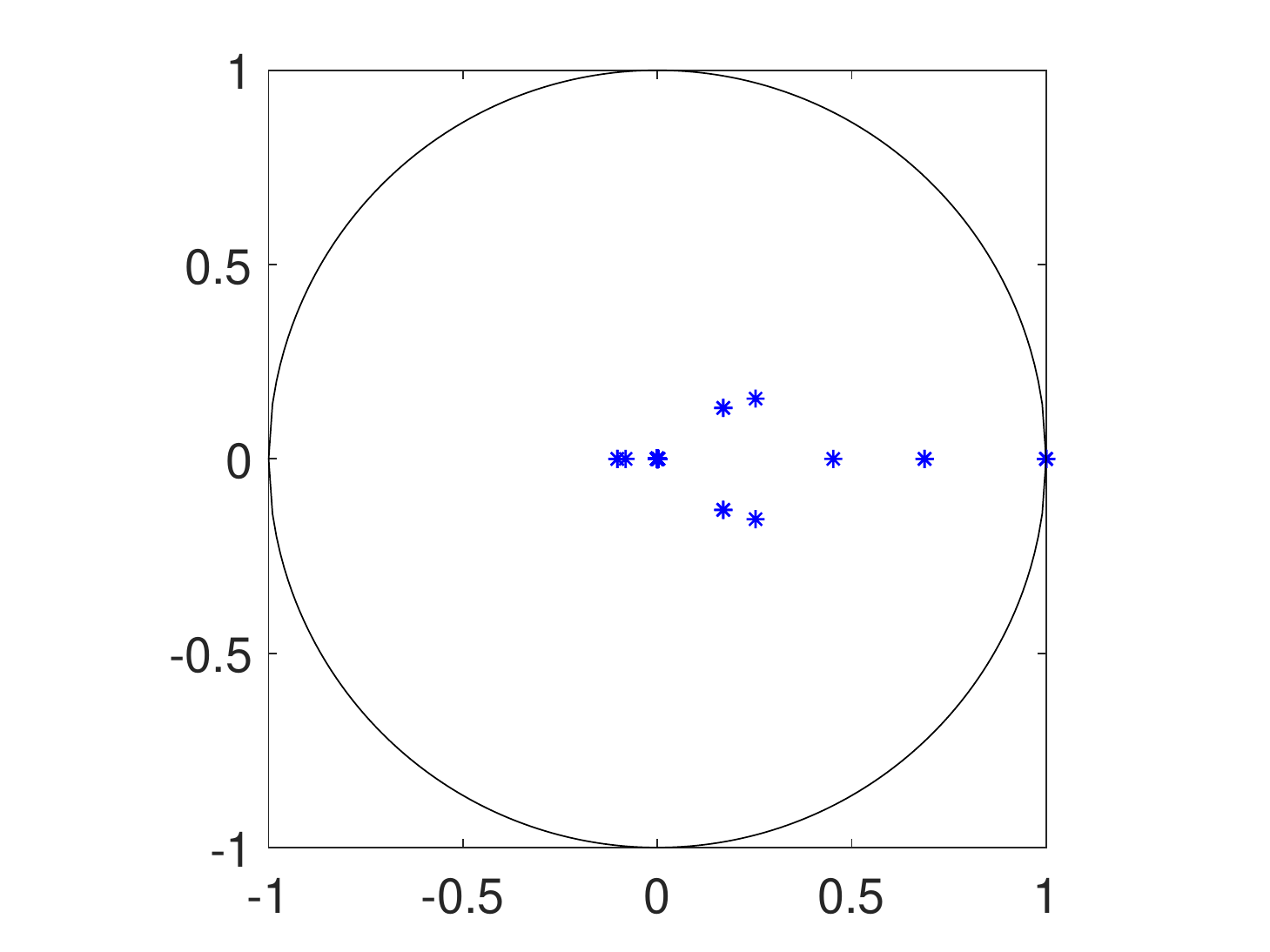}
\includegraphics[width=0.45\linewidth]{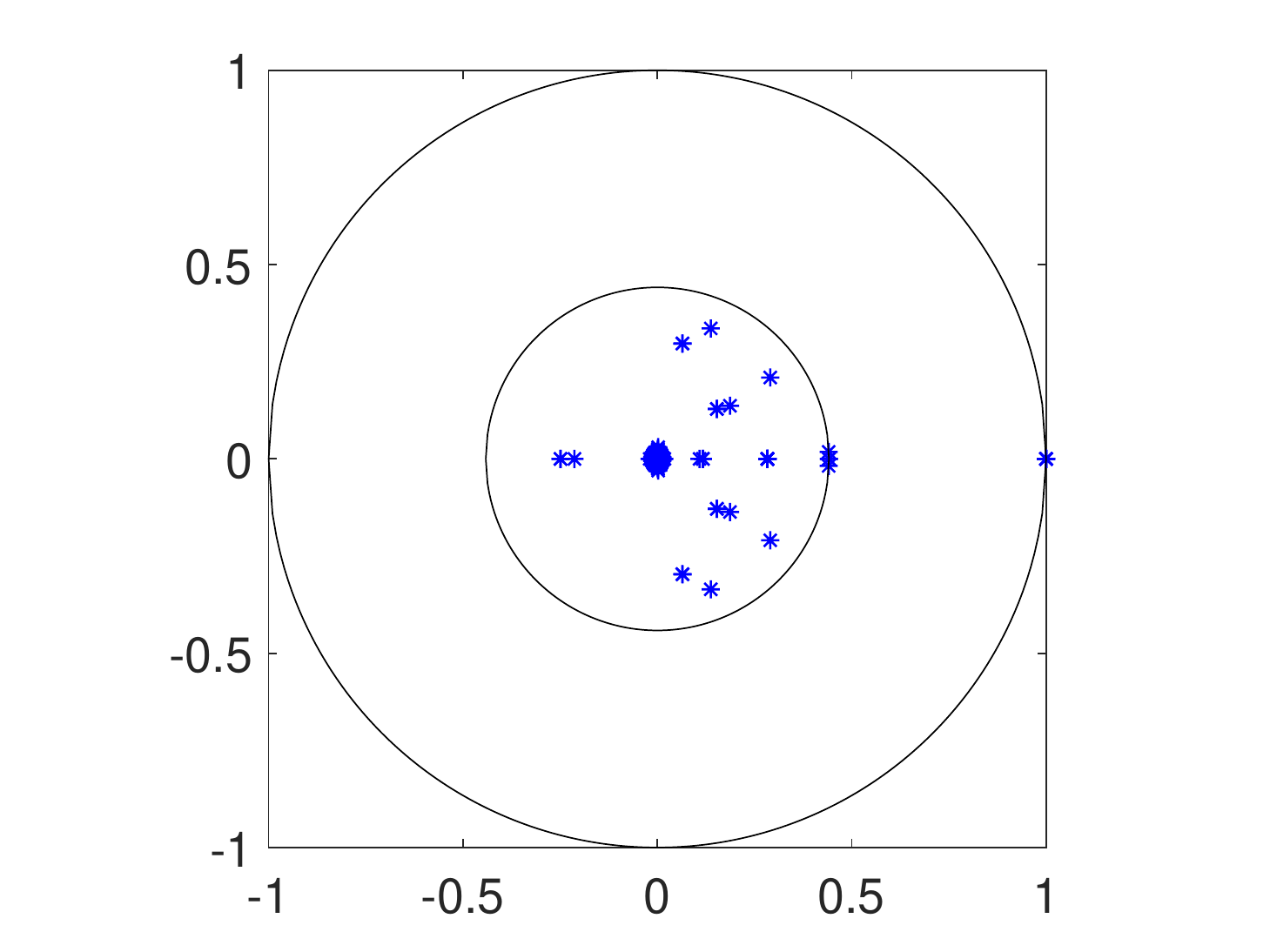}
\caption{
Eigenvalue distributions at $x^*$ for sAA(1) acceleration of steepest descent (top row) and ALS (bottom row) for a tensor problem with $c=0.5$.
(top left) Eigenvalues of $q'_{SD}$ with $\alpha$ from \cref{eq:SD-opt-alpha-beta}; $\rho(q'_{SD})=0.942$.
(top right) Eigenvalues of $T$ for sAA(1)-SD with the optimal parameters from \cref{Thm:positive-negative-AA-SD}; $\rho(T)=0.760$. The radius of the inner circle is $\rho^{*}_{\rm sAA(1)-SD}$ from \cref{eq:improved-SD-opt-cs}.
(bottom left) Eigenvalues of $q'_{ALS}$; $\rho(q')=0.688$.
(bottom right) Eigenvalues of $T$ for sAA(1)-ALS using the predicted $\beta$ in \cref{opt-beta-real}; $\rho(T)=0.441$.
The radius of the inner circle is $\rho_p$ in \cref{eq:lower-bound-ALS}.
For the ALS results, making abstraction of the eigenvalues one that correspond to the Hessian degeneracy,
the eigenvalue of $q'_{ALS}$ with the largest modulus is real, and the eigenvalue of $T$ with the largest modulus lies on the inner circle, in accordance with Conjectures \ref{conjec1-sAA-ALS} and \ref{conjec2-sAA-ALS}.
}
\label{eigs-sAA-c05-plot}
\end{figure}

\subsection{Quantifying asymptotic convergence acceleration by sAA and sNGMRES using spectral properties of the nonlinear preconditioner}
\label{subsec:spectral}

We first illustrate numerically how our theoretical results from Sections
\ref{sec:convergence-Anderson} and \ref{sec:convergence-NGMRES}
can be used to quantify the asymptotic convergence acceleration that can
be provided by the stationary sAA and sNGMRES methods with optimal parameters.
\vlong{In particular, this provides insight, for the canonical tensor decomposition application,
into how and by how much the asymptotic convergence of SD and
ALS can be accelerated by nonlinear
convergence acceleration methods of Anderson and NGMRES type, aiming to
explain asymptotic convergence behavior as seen in \cref{compare-SD-ALS-AA-plot}.}

\cref{eigs-sAA-c05-plot} considers acceleration by sAA(1)
for a mildly ill-conditioned tensor decomposition problem with $c=0.5$ and condition number
$\bar{\kappa}=22.76$.
The top row shows eigenvalue distributions for acceleration of the SD method.
The eigenvalues of $q'_{SD}(x^*)=I - \alpha H(x^*)$ (left panel) are real. SD converges slowly,
with asymptotic convergence factor $\rho(q'_{SD})=0.942$ (where $\alpha$ from \cref{eq:SD-opt-alpha-beta} is used).
The top right panel of \cref{eigs-sAA-c05-plot} shows how sAA(1) with the optimal parameters
from \cref{Thm:positive-negative-AA-SD} modifies the real $q'_{SD}(x^*)$ spectrum into a complex
spectrum for $T(q'(x^*))$ from \cref{eq:matrix-step1} with substantially reduced spectral radius:
$\rho(T)=0.760$ and asymptotic convergence is faster.
This optimal asymptotic convergence factor $\rho(T)$ for sAA(1)-SD
can be computed as a function of the condition number of $H$
using our theoretical result from \cref{Thm:positive-negative-AA-SD}.
Note that due to the scaling indeterminacy, $H$ has $2r$ eigenvalues 0, so $q'$ and $T$ have $2r$ eigenvalues 1.
These eigenvalues of value 1 do not influence the convergence speed.
\vlong{The convergence factor is the largest modulus
of eigenvalues of $q'$ and $T$ smaller than 1.}

The bottom row of \cref{eigs-sAA-c05-plot} shows how sAA(1) accelerates ALS.
The spectrum of $q'_{ALS}(x^*)=I - M(x^*)^{-1} H(x^*)$ contains complex eigenvalues and has a much smaller
spectral radius than SD, $\rho(q'_{ALS})=0.688$.
The bottom right panel shows how sAA(1) contracts the spectrum of $q'_{ALS}$, resulting in
a substantially reduced spectral radius for $T(q'(x^*))$ from \cref{system-form-NGM}: $\rho(T)=0.441$,
with the fastest asymptotic convergence by far.
Making abstraction of the eigenvalues one that correspond to the Hessian degeneracy,
the eigenvalue of $q'_{ALS}$ with the largest modulus is real, and the eigenvalue of $T$
with the largest modulus lies on the inner circle
with radius $\rho_p$ from \cref{eq:lower-bound-ALS}, in accordance with Conjectures
\ref{conjec1-sAA-ALS} and \ref{conjec2-sAA-ALS}.
This means that the asymptotic convergence factor of sAA(1)-ALS is given by $1-\sqrt{1-\rho_{q'}}$,
in accordance with \cref{thm:lower-bound-ALS} and \cref{conjec2-sAA-ALS}.
We have also verified that \cref{conjec2-sAA-ALS} holds for tensor problems with $c=0.7$ and $c=0.9$,
(see SM \cref{eigs-sAA-c07-c09-plot})
and for two additional real-data canonical tensor problems
from \cite{mitchell2020nesterov} (see \cref{eigs-sAA-real-plot}).
These results provide explanations and quantification of asymptotic convergence acceleration
by AA and NGMRES as seen in \cref{compare-SD-ALS-AA-plot} and in convergence plots in Section \ref{subsec:nonstationary}.

\begin{figure}
\centering
\includegraphics[width=0.45\linewidth]{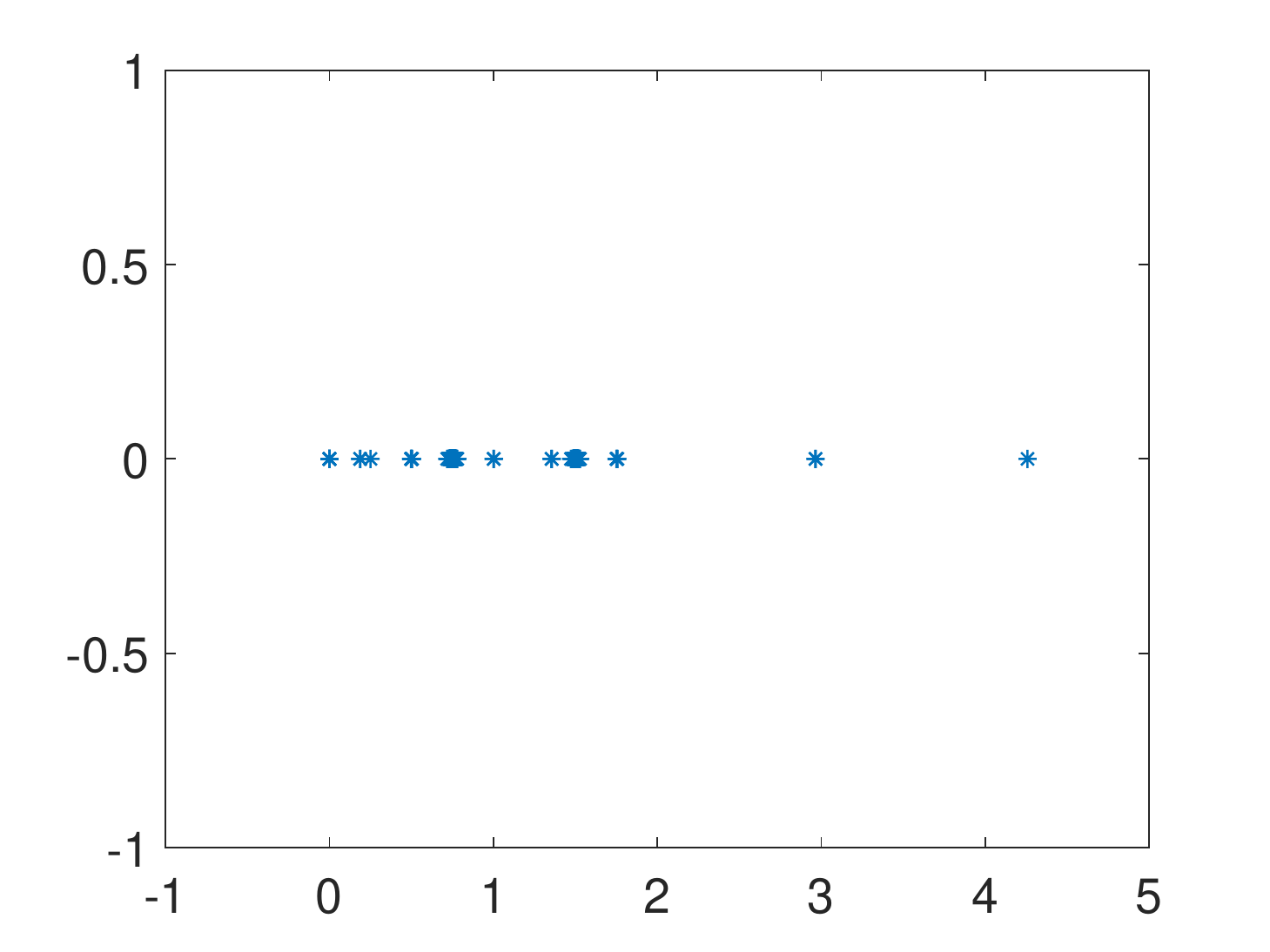}
\includegraphics[width=0.45\linewidth]{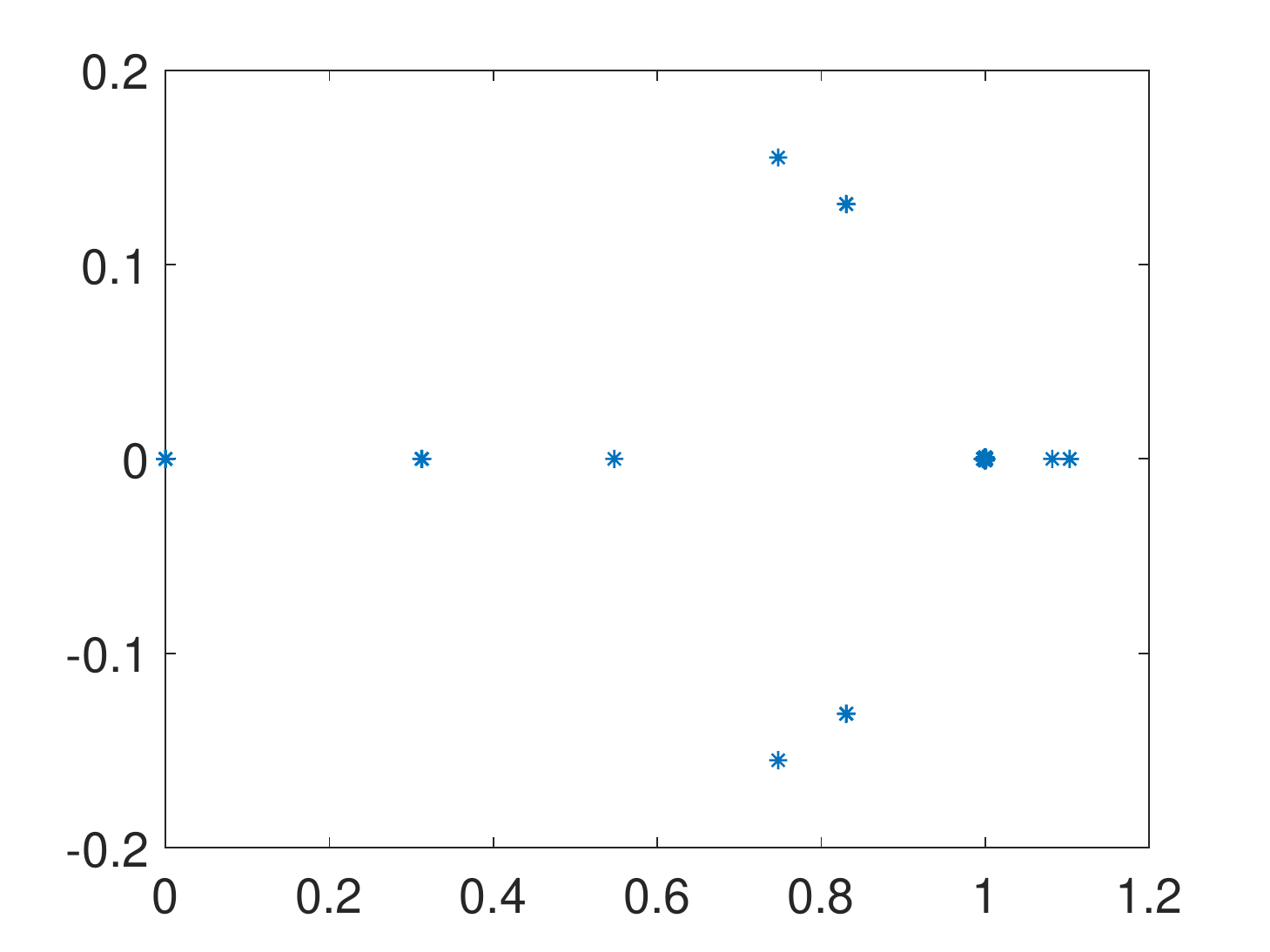}
\caption{Tensor problem with $c=0.5$. (left) Eigenvalue distribution of $H(x^*)$; the (modified) 2-norm condition number $\bar{\kappa}_2(H(x^*))=22.76$.
(right) Eigenvalue distribution of $M^{-1}(x^*)H(x^*)$; $\bar{\kappa}_2(M^{-1}(x^*)H(x^*))=7.39$.
}
\label{eigs-Hessian-c05-plot}
\end{figure}

\begin{figure}
\centering
\includegraphics[width=.45\linewidth]{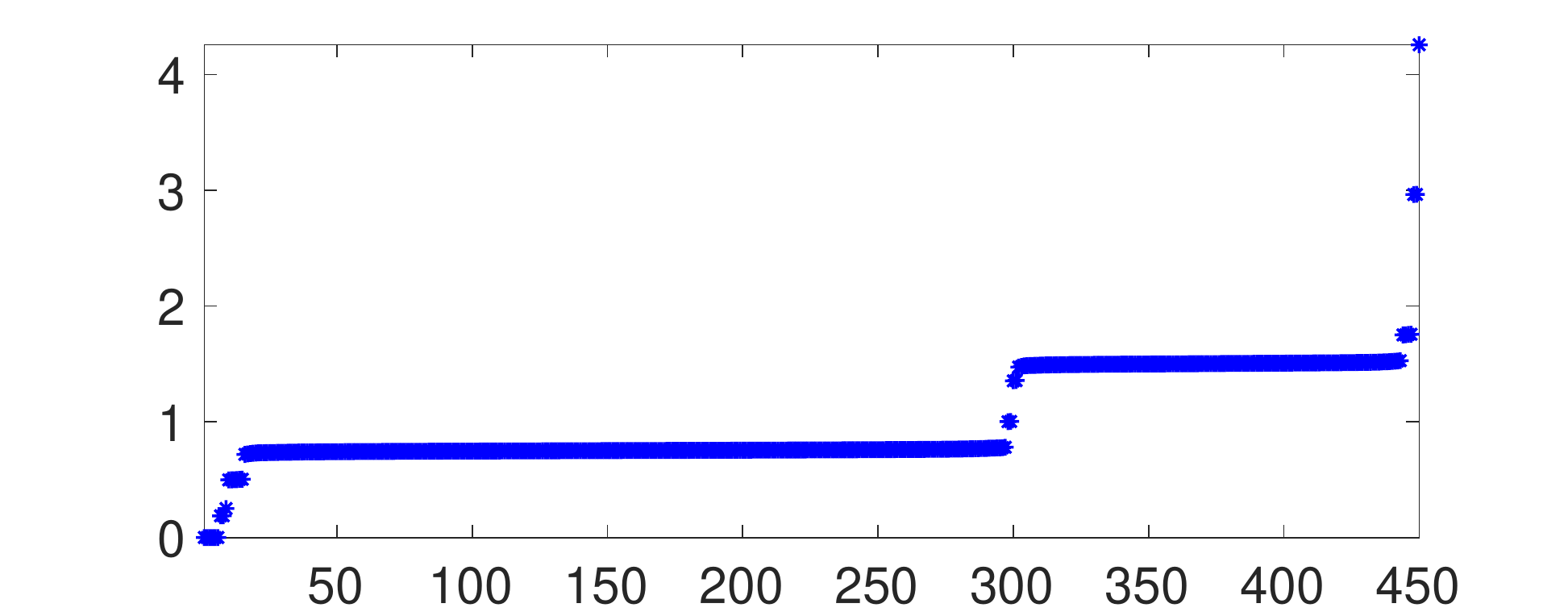}
\includegraphics[width=.45\linewidth]{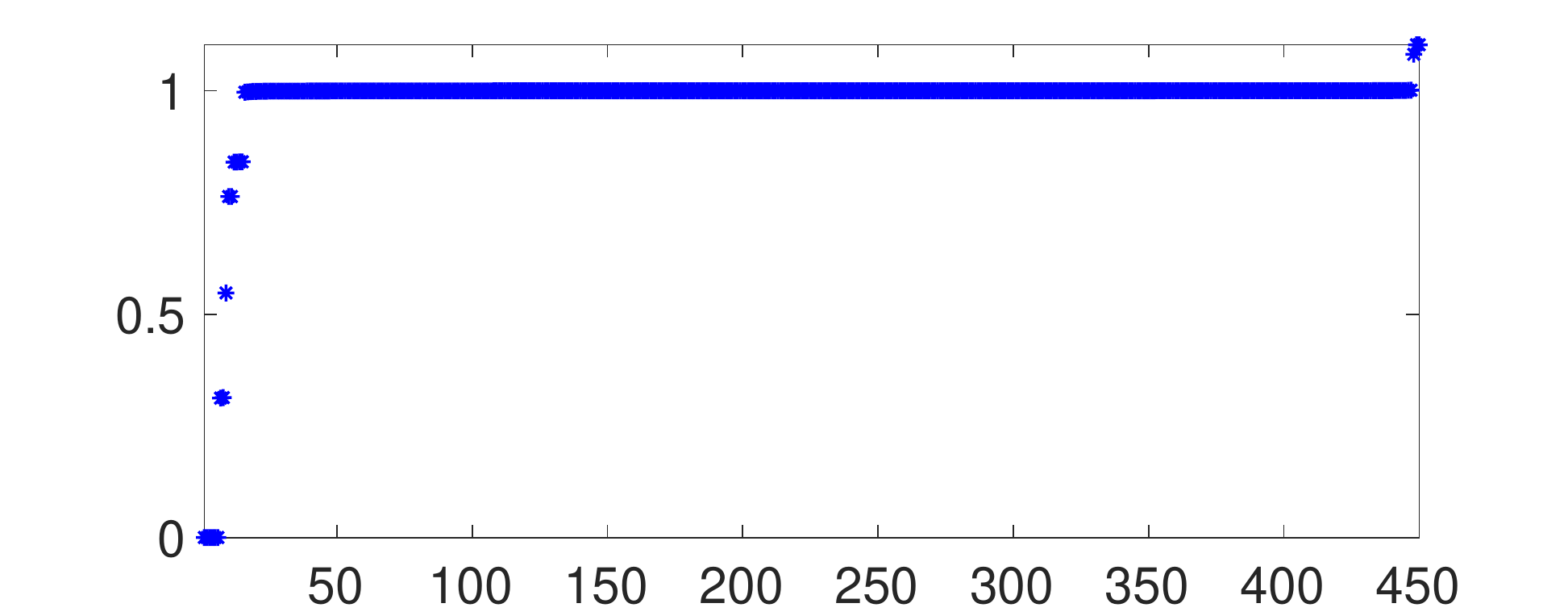}
\caption{Tensor problem with $c=0.5$. (left) Modulus of the eigenvalues of $H(x^*)$. (right) Modulus of the eigenvalues of $M^{-1}(x^*)H(x^*)$.} \label{eigs-Hessian-full-c05-plot}
\end{figure}

It is also interesting to interpret the difference in asymptotic convergence speed of
sAA(1)-SD and sAA(1)-ALS, as indicated by the convergence factors of the right-hand
panels of \cref{eigs-sAA-c05-plot},
in terms of the efficiency of the nonlinear preconditioners $q_{SD}$ and $q_{ALS}$ for sAA(1).
Indeed, comparing $q'_{SD}(x^*)=I - \alpha H(x^*)$ and $q'_{ALS}(x^*)=I - M(x^*)^{-1} H(x^*)$
with $q'=I - P\,A$ for the linear preconditioned case of \cref{eq:qprec} with preconditioning matrix $P$,
we consider the spectrum of the nonlinearly preconditioned Hessian $M(x^*)^{-1} H(x^*)$ (as used in ALS,
with $M(x^*)^{-1}$ being the nonlinear equivalent of $P$)
and the spectrum of the un-preconditioned Hessian $H(x^*)$ (as used in SD)
in \cref{eigs-Hessian-c05-plot,eigs-Hessian-full-c05-plot}.
In the linear case, it is known from preconditioning for GMRES that preconditioning
can substantially improve asymptotic convergence in several ways, including by
reducing the condition number of $A$, and by clustering eigenvalues such that the
value of the GMRES polynomial can more effectively be minimized over the spectrum
(see, e.g., \cite{trefethen1997numerical}).
In \cref{eigs-Hessian-c05-plot,eigs-Hessian-full-c05-plot} we see that ALS' preconditioning
by $M(x)^{-1}$ does indeed reduce the condition number of $H(x^*)$ by contracting the spectrum (\cref{eigs-Hessian-c05-plot}),
and it also clusters many eigenvalues at 1 (\cref{eigs-Hessian-full-c05-plot}), resulting in very efficient
nonlinear preconditioning for AA and NGMRES, compared to the identity-preconditioning provided by SD.

Results for sNGMRES(1) acceleration of SD and ALS that are similar to the sAA(1) results of \cref{eigs-sAA-c05-plot} are given in
SM Section \ref{subsubsec:sNGMRES(1)}, as well as further results for more ill-conditioned
tensor problems with $c=0.7$ and $c=0.9$, confirming the general findings of \cref{eigs-sAA-c05-plot}.

\subsection{Convergence acceleration by nonstationary AA and NGMRES}
\label{subsec:nonstationary}
In this section we shift the focus from the asymptotic numerical results at $x^*$ of Section \ref{subsec:spectral} in terms
of eigenvalue spectra and spectral radii, and
consider complete nonlinear convergence histories $f(x_k)-f(x^*)$ starting from the initial guess $x_0$, with special
attention for the convergence behavior as $x_k \rightarrow x^*$.
We also investigate the linear upper bounds described in Section \ref{sec:inf-bounds} for GMRES($\infty$) applied to the linearized problem, and compare with the asymptotic convergence behavior of AA and NGMRES for $m=\infty$ and for finite $m$. We focus on ALS.
Note that the convergence plots in this section show $f(x_k)-f(x^*)$, which converges asymptotically with
factor $\rho^2$, see \cref{rem:f2}.
In the nonlinear test runs we use the following parameters and notation:
\begin{enumerate}
\item We use a globalization method based on the Mor\'{e}-Thuente cubic line search of \cite{MR1367800} for all AA($m$), sAA($m$), sNGMRES($m$), and NGMRES($m$) runs, with line search parameters chosen as in \cite{sterck2012nonlinear,sterck2013steepest}.
\item For the Nesterov method with restart \cite{mitchell2020nesterov}: we use a gradient ratio formula for $\beta^{(k)}$, and we use the function restart mechanism. For details, see \cite{mitchell2020nesterov}.
\item Theoretical convergence factors:  The optimal convergence factors $\rho_{\rm SD}$, $\rho_{\rm ALS}$,
$\rho_{\rm sAA(1)-SD}$, and $\rho_{\rm sNGMRES(1)-SD}$ are as in \cref{comparsion-ASD-vs-NGMRESSD}. The optimal
convergence factor $\rho_{\rm sAA(1)-ALS}$ is from \cref{thm:lower-bound-ALS,conjec2-sAA-ALS}, and the
optimal convergence factor $\rho_{\rm sNGMRES(1)-ALS}$ is from \cref{convergence-table-AA-ALS}.
\item All initial guesses $x_0$ are chosen with uniformly random components in [0,1].
\end{enumerate}
Matlab code with the acceleration methods used for our tests can be found at
\url{https://github.com/hansdesterck/nonlinear-preconditioning-for-optimization}.

\subsubsection{Comparing asymptotic convergence of nonstationary AA and NGMRES with optimal stationary convergence
factors}
\label{subsubsec:nonstationary-stationary}

\begin{figure}
\centering
\includegraphics[width=0.45\linewidth]{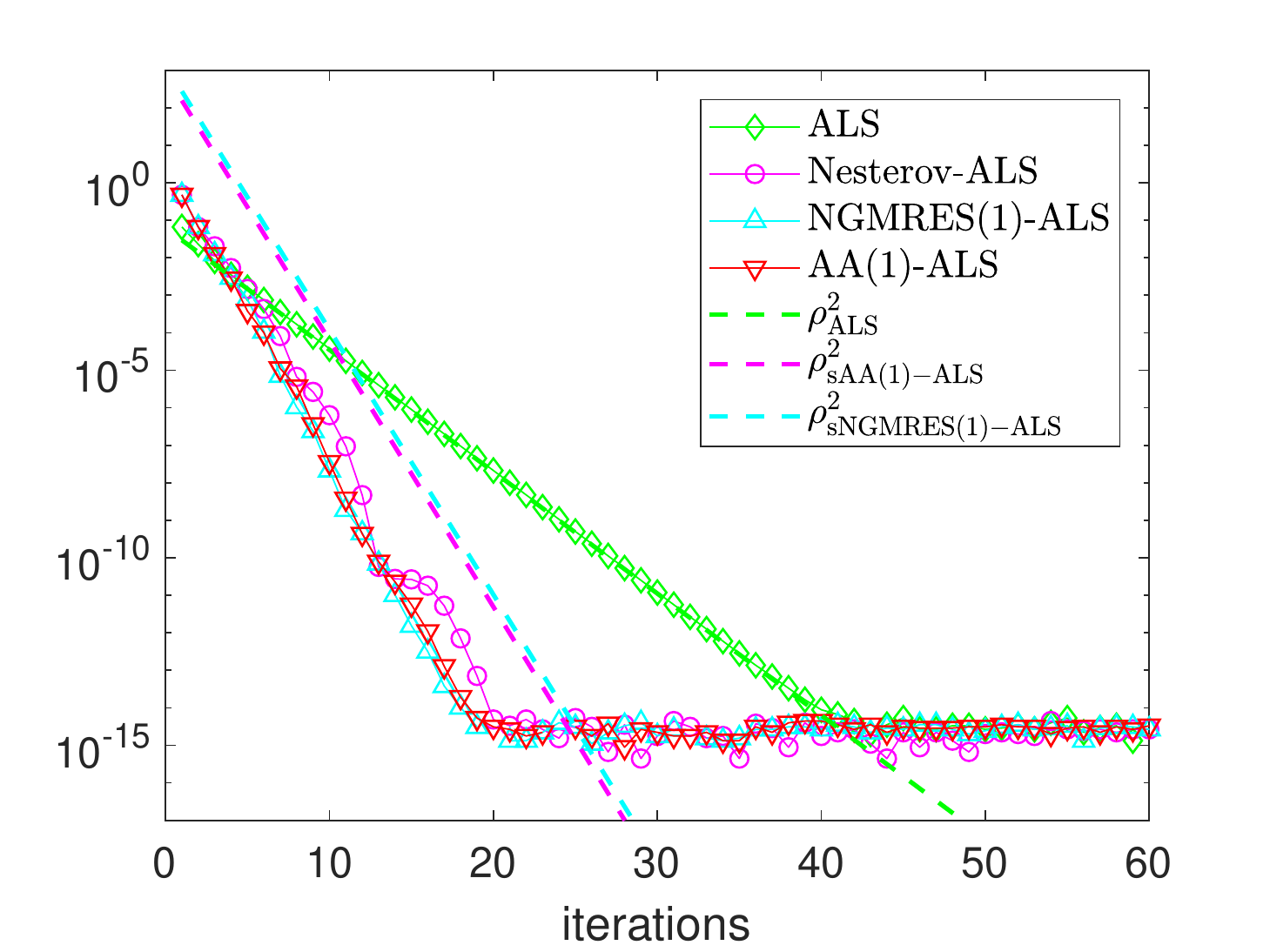}
\includegraphics[width=0.45\linewidth]{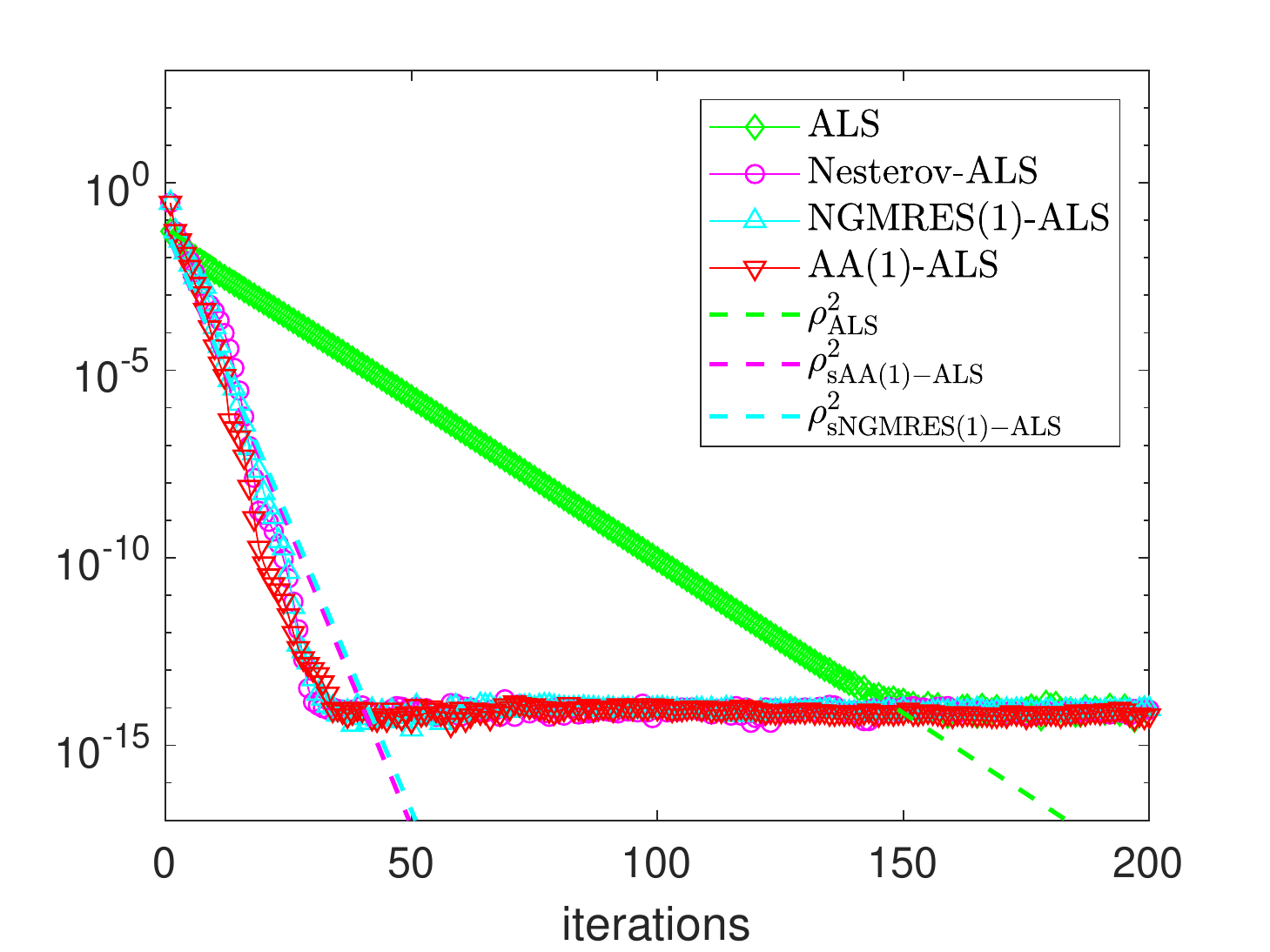}
\caption{
Comparison of the nonstationary AA(1)-ALS, NGMRES(1)-ALS, and Nesterov-ALS methods with theoretical asymptotic convergence factors for optimal stationary methods, for tensor problems with $c=0.5$ (left panel) and $c=0.7$ (right panel). The vertical axis represents $f(x_k)-f(x^*)$, the convergence towards the minimum value of $f(x)$.
}
\label{c05-AA-NGMRES-plot}
\end{figure}

\cref{c05-AA-NGMRES-plot} shows what we believe is an interesting result.
For the same tensor problem with $c=0.5$ as in \cref{eigs-sAA-c05-plot}, and another problem with
$c=0.7$, the figure shows that the nonstationary
iterations AA(1)-ALS, NGMRES(1)-ALS, and Nesterov-ALS converge \emph{with nearly the same asymptotic convergence factor}
as the optimal stationary methods sAA(1)-ALS and sNGMRES(1)-ALS.
\cref{c05-AA-NGMRES-plotb} confirms this overall picture for a tensor problem with $c=0.9$.
Intuitively this is not unexpected, but perhaps still surprising:
a plausible explanation is that the \emph{locally optimal} least-squares coefficients in each iteration of the nonstationary methods lead to asymptotic convergence behavior that has nearly the same linear convergence factor as the stationary methods with fixed coefficients that are \emph{globally optimal} in terms of asymptotic convergence factor.
These numerical results indicate that the effectiveness of ALS as a nonlinear preconditioner, as was
demonstrated and quantified for the stationary sAA(1) and SNGMRES(1) methods theoretically in Sections \ref{sec:convergence-Anderson} and \ref{sec:convergence-NGMRES} and numerically and in terms of spectral properties in Section \ref{subsec:spectral},
appears to translate to the nonstationary methods of \cref{c05-AA-NGMRES-plot}. As such, we can extrapolate
that our ways to understand and quantify the effectiveness of nonlinear preconditioners for the stationary methods
also offer good predictions for the nonstationary, practical methods.

Note that our numerical results as in \cref{c05-AA-NGMRES-plot} report iteration counts, where it has to be taken into account that the cost of an accelerated iteration is about two to four times the cost of an SD or ALS iteration, see \cite{mitchell2020nesterov}.
\subsubsection{Comparing asymptotic convergence of nonstationary AA and NGMRES with GMRES($\infty$) convergence factors}
\label{subsubsec:nonstationary-GMRES}

\begin{figure}
\centering
\includegraphics[width=0.45\linewidth]{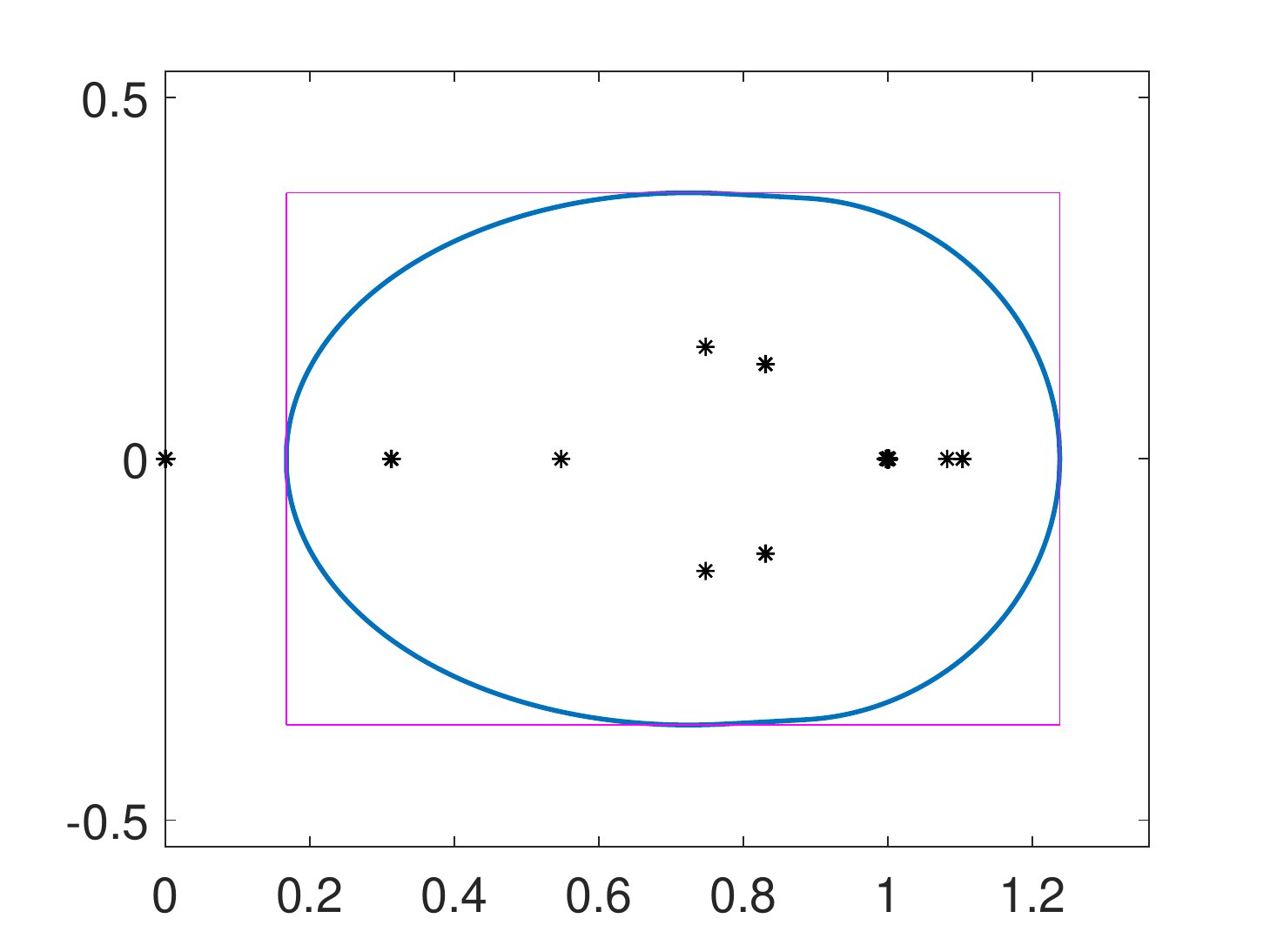}
\caption{
Field of values (blue) and bounding rectangle \cref{eq:bounding} for the matrix $B$ of \cref{eq:B}, where $B$ is the projection
of the preconditioned Hessian $M^{-1}(x^*) H(x^*)$ from the right panel of \cref{eigs-Hessian-c05-plot} onto the subspace
spanned by the eigenvectors of $M^{-1}(x^*) H(x^*)$ that have nonzero eigenvalues. The eigenvalues of $B$ and $M^{-1}(x^*) H(x^*)$
are also shown.
The FOV provides a linear convergence bound for GMRES applied to the (equivalent) linearized
fixed-point equation $B \, z = Q^T \, b$, see \cref{thm:GMRES-conv}.
}
\label{c05-FOV-plot}
\end{figure}

\begin{figure}
\centering
\includegraphics[width=0.45\linewidth]{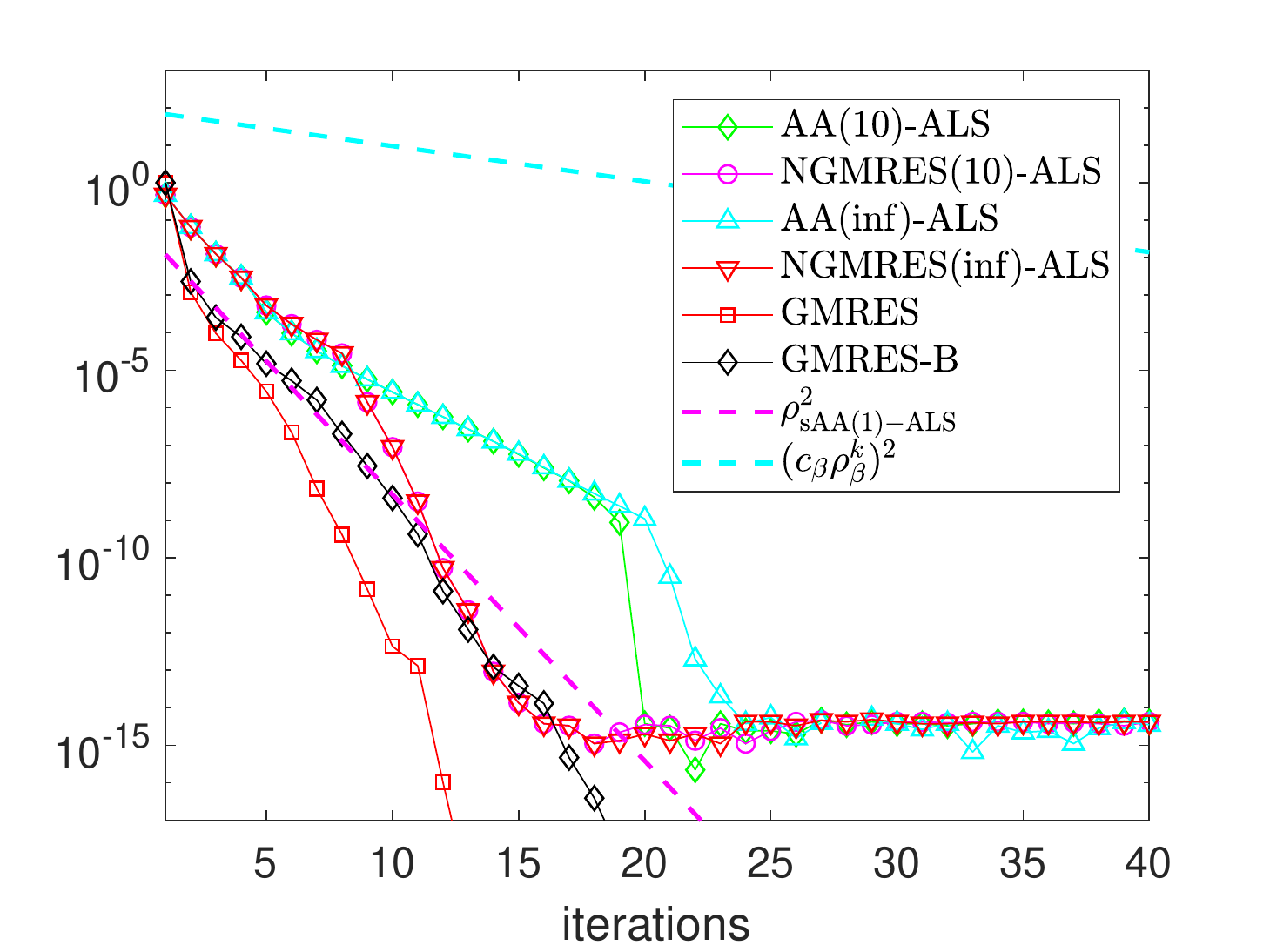}
\includegraphics[width=0.45\linewidth]{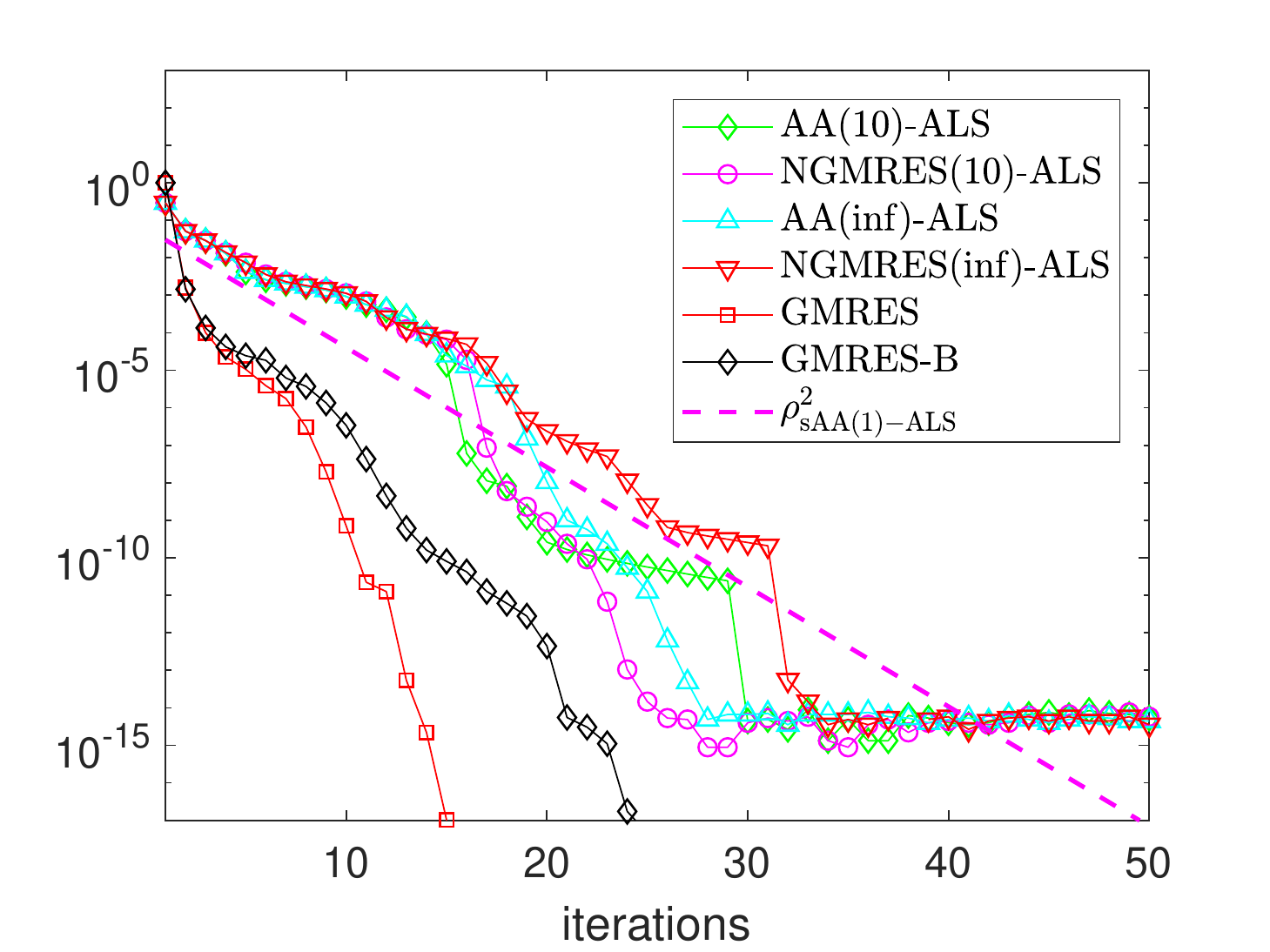}
\caption{Comparison of AA and NGMRES convergence curves for two tensor problems with $c=0.5$ (left panel)
and $c=0.7$ (right panel).
The four nonlinear AA and NGMRES curves are compared with GMRES applied to
linearized equation \cref{eq:fixed-lin} and GMRES-B applied to projected
nonsingular linearized system \cref{eq:projected}. For $c=0.5$,
$c_\beta \rho_{\beta}^k$ (computed based on the numerical FOV of \cref{c05-FOV-plot})
provides a pessimistic upper bound.
Our new $\rho_{sAA(1)-ALS}$ from \cref{thm:lower-bound-ALS,conjec2-sAA-ALS} appears to provide
a useful indication of the convergence speed of the linear and nonlinear methods.
For the four nonlinear methods, the vertical axis represents $f(x_k)-f(x^*)$.
For the GMRES runs, the vertical axis represents $\|r_k\|^2/\|r_0\|^2$.}
\label{c05-inf-plot}
\end{figure}

Finally, we discuss results for the other, more direct, way of predicting asymptotic convergence factors for ALS accelerated by nonstationary
AA and NGMRES, based on GMRES bounds for the linearized problem about $x^*$, with window size $m=\infty$
and using the FOV bounds from \cref{thm:GMRES-conv}.
We consider the tensor problem with $c=0.5$ and associated preconditioned Hessian $M^{-1}(x^*) H(x^*)$
from the right panel of \cref{eigs-Hessian-c05-plot}.
We consider linearized fixed-point equation (\ref{eq:fixed-lin}) with $I-q'(x^*)=M^{-1}(x^*) H(x^*)$, see \cref{eq:GS-iter}.

Since $M^{-1}(x^*) H(x^*)$ is singular, we cannot directly use \cref{thm:GMRES-conv} to determine linear convergence
bounds for solving this system using GMRES.
We proceed as follows to transform the singular system to an equivalent nonsingular system that can be used to quantify
asymptotic GMRES convergence. Let $V$ be the matrix with the eigenvectors of $M^{-1}(x^*) \, H(x^*)$ as its columns, but
with the 6 eigenvectors that correspond to eigenvalues 0 removed. Let $V=QR$ be the thin $QR$ decomposition of $V$,
and consider
\begin{equation}
B=Q^T \, M^{-1}(x^*) \, H(x^*) \,Q.
\label{eq:B}
\end{equation}
Matrix $B$ has the same eigenvalues as $M^{-1}(x^*) \, H(x^*)$, except for
the 6 zero eigenvalues, and the eigenvectors $y$ of $B$ are related to the eigenvectors $x$ of $M^{-1}(x^*) \, H(x^*)$ by
$Q \, y=x$. We can transform the singular linearized system $M^{-1}(x^*) \, H(x^*) \, x = M^{-1}(x^*) \, H(x^*) \, x^* =: b$ into
the equivalent nonsingular system
\begin{equation}
B \, z = Q^T \, b,
\label{eq:projected}
\end{equation}
with $x=Q\,z$.
We can now obtain linear convergence bounds for solving this transformed system using GMRES with the help of
\cref{thm:GMRES-conv}.

We compute $\rho_\beta$ in the linear asymptotic convergence bound of \cref{thm:GMRES-conv}
in two ways. First, we approximate the field of values of $B$ by a bounding rectangle \cite{mees1979domains}
\begin{equation}
   [\lambda_{\min}(B_s), \lambda_{\max}(B_s)] \times [ -\rho(B_a)i,  \rho(B_a)i ],
\label{eq:bounding}
\end{equation}
where $B_s=(B+B^T)/2$ and $B_a=(B-B^T)/2$ are the symmetric and anti-symmetric parts of $B$.
Based on this bounding rectangle for the FOV, we obtain the asymptotic convergence factor
$\rho_{\beta,bb}=0.9014$ for the bound of \cref{thm:GMRES-conv}.
A slightly better convergence factor for the bound can be obtained by computing the FOV numerically
\cite{driscoll2014chebfun,trefethen2005spectra}, giving $\rho_{\beta,num}=0.8971$.
\cref{c05-FOV-plot} shows the FOV of $B$ and the bounding rectangle estimate obtained from \cref{eq:bounding}.

The results in \cref{c05-inf-plot} show several interesting findings.
We first focus on the left panel with $c=0.5$.
First, each of the AA($\infty$)-ALS, AA(10)-ALS,
NGMRES($\infty$)-ALS and NGMRES(10)-ALS methods show comparable convergence behavior.
Both GMRES applied to linearized equation \cref{eq:fixed-lin} and GMRES applied to the projected
nonsingular system \cref{eq:projected} converge with similar asymptotic speed as the four nonlinear methods.
All these methods appear to satisfy the asymptotic linear convergence bound of \cref{thm:GMRES-conv},
with convergence factor $\rho_{\beta}=\rho_{\beta,num}$ computed based on the FOV of \cref{c05-FOV-plot}.
This indicates that, if a linear asymptotic convergence factor bound can be found for the linearized problem
about $x^*$, e.g., as in \cref{thm:GMRES-conv}, then the convergence of the nonlinear AA and NGMRES iterations may locally
have the same  linear asymptotic convergence factor bound, i.e., if $x_0$ is chosen close enough to $x^*$, in accordance with
\cref{conjec:inf}.
However, this bound, while rigorous for GMRES applied to \cref{eq:projected}, appears quite pessimistic.
On the other hand, and remarkably, our new theoretical convergence factor $\rho_{sAA(1)-ALS}$ from
\cref{thm:lower-bound-ALS,conjec2-sAA-ALS} appears to be an accurate indicator of the asymptotic
convergence speed of the four nonlinear methods and GMRES.
The $c=0.7$ result confirms this overall picture. We note, however, that we were not able to obtain an FOV
convergence factor for $c=0.7$ using \cref{thm:GMRES-conv}, because the left intersection of the numerical
FOV with the $x$-axis occurred at a slightly negative $x$-value. It is possible the accurate FOV contains 0, but
this negative number may also be a result of the ill-conditioning of $M^{-1}(x^*)$ and resulting inaccuracies
in the eigenvector and $QR$ computations. This is another potential drawback of estimating a linear
convergence factor through \cref{thm:GMRES-conv} for ill-conditioned matrices, while our
$\rho_{sAA(1)-ALS}$ prediction may be more robust.
Further results for the problems from \cref{c05-inf-plot}
are shown in \cref{c05-inf-plotb} with different random seeds,
confirming the general trends from \cref{c05-inf-plot}.

\section{Conclusion} \label{sec:concl}
In this work, we provide two methods for estimating the asymptotic
convergence improvement resulting from AA and NGMRES
acceleration of fixed-point methods. While such improvement has been
observed numerically in many applications, there is a lack in understanding
and quantifying this improvement theoretically.
Asymptotic convergence results for AA and NGMRES
with finite window size appear difficult, but we made progress
in the simplified setting of stationary versions of AA and NGMRES.
We derived theoretical results, for small window sizes, on finding
coefficients for the stationary methods that result in optimal asymptotic
convergence factors, assuming knowledge of $q'(x^*)$.
This allowed us to understand the effectiveness
of a fixed-point iteration viewed as a nonlinear preconditioner for AA or NGMRES
in terms of the spectral properties of $q'(x^*)$.
We showed numerically that the convergence factors of the stationary
methods with globally optimal, fixed coefficients indeed provide a
good estimate of the asymptotic convergence of nonstationary AA
and NGMRES, which determine optimal coefficients locally in each iteration.

Our second way of estimating AA and NGMRES asymptotic convergence factors
applies GMRES to the fixed-point method
linearized about the fixed point, and derives linear convergence bounds
for GMRES using the field of values of $I-q'(x^*)$. While these bounds are
rigorous for GMRES
\vlong{and may provide the best hope of deriving local
linear convergence bounds for the nonlinear methods through our
\cref{conjec:inf},}
we found the associated linear convergence
factors pessimistic in our numerical tests, and less predictive than the
convergence factor estimates we obtained from our analysis
of stationary AA and NGMRES methods.
\cref{conjec:inf} may provide a direction for
proving local linear convergence bounds for AA and NGMRES with
infinite window size, but, similar to what we
explained for the linear case of GMRES, it is likely that the mere existence
of such bounds will depend substantially on the matrix properties
of $I-q'(x^*)$, including the location of its eigenvalues in the complex
plane and the geometry of its eigenvector basis.
\vlong{Specifically, if
$0 \notin \textrm{FOV}(I-q'(x^*))$, \cref{thm:GMRES-conv} and
\cref{conjec:inf} would imply existence of a local linear
asymptotic convergence bound for AA and NGMRES.}

In terms of the canonical tensor decomposition application,
this paper provides the insight
and methodology to understand and quantify why and by
how much the acceleration by AA and NGMRES improves the
asymptotic convergence of ALS, or, equivalently, why ALS
is an effective nonlinear preconditioner for AA and NGMRES,
as had been observed numerically before
\cite{sterck2012nonlinear,de2016nonlinearly,mitchell2020nesterov}.
Next steps include proving the conjectures we made on
optimal sAA(1)-ALS convergence bounds for canonical
tensor decomposition, and attempting to bound ALS convergence
factors in terms of $\bar{\kappa}(H(x^*))$, using the
structure of the canonical tensor decomposition Hessian.
Similarly, it may be possible to bound $\rho_\beta$ in
\cref{thm:GMRES-conv} applied to \cref{eq:fixed-lin}
in terms of $\bar{\kappa}(H(x^*))$
or other properties of the Hessian.
\yh{More generally, the approaches and results of this paper
can be applied to quantify convergence acceleration by AA or NGMRES
applied to other fixed-point methods such as the Alternating
Direction Method of Multipliers (ADMM) \cite{zhang2019accelerating},
and can be extended to problems with less smoothness.
For example, see \cite{wang2020} for an application of the findings
of this paper to AA acceleration of ADMM.}


\vspace{-.4cm}

\bibliographystyle{siamplain}
\bibliography{convergence_reference}

\newpage

\renewcommand\thefigure{\thesection.\arabic{figure}}
\setcounter{figure}{0}
\renewcommand\thetable{\thesection.\arabic{table}}
\setcounter{table}{0}
\appendix
\addtocounter{section}{19}
\section*{Supplementary materials}

\subsection{A weaker form of \cref{thm:lower-bound-ALS}}
\label{subsec:weaker}
\ \\
If the assumption that $\mu =\rho_{q'}$ in \cref{thm:lower-bound-ALS} does not hold, we still can give an estimate of the lower bound on the asymptotic convergence factor by considering the nonnegative (real) eigenvalues of $q'$, which is stated in the following.
\begin{corollary}[lower bound for fixed-point methods with complex Jacobian spectrum]\label{cor:weaker}
\yh{Consider the sAA(1) acceleration method \cref{system-form} with Jacobian matrix $T$ defined in (\ref{eq:matrix-step1}) applied to fixed-point method (\ref{eq:fixed-point}) with fixed point $x^*$.}
Denote by $\rho_{+}$ the largest nonnegative (real) eigenvalue of $q'(x^*)$. 
Then the optimal asymptotic convergence factor of sAA(1) is bounded by
\begin{equation*}
  \min_{\beta\in\mathbb{R}}\rho(T(x^{*};\beta)) \geq  1-\sqrt{1-\rho_{+}},
\end{equation*}
and if the equality holds, then \yh{the unique optimal $\beta$ is given by}
\begin{equation*}
\beta_{\rm opt}=\frac{1-\sqrt{1-\rho_+}}{1+\sqrt{1-\rho_{+}}}.
\end{equation*}
\end{corollary}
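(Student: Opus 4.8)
The plan is to follow the proof of \cref{thm:lower-bound-ALS} essentially verbatim, replacing the assumption that the spectral radius is attained at a real eigenvalue by the milder observation that $\rho_+$ is itself a real, nonnegative eigenvalue of $q'(x^*)$; this substitution is precisely why the resulting bound is weaker. First I would record the decomposition of the spectral radius of $T$ in terms of the spectrum of $q'$. By the eigenvalue relation \cref{eq:AA-eig-form} together with the block structure of $T$ in \cref{eq:matrix-step1}, every eigenvalue $\lambda$ of $T(x^{*};\beta)$ is a root of the quadratic associated with some $\mu\in\sigma(q')$, so that $\rho(T(x^{*};\beta))=\max_{\mu\in\sigma(q')}\max\mathcal{S}_{\mu}(\beta)$.

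Since $\rho_+\in\sigma(q')$, discarding all other eigenvalues from the maximum yields the pointwise lower bound $\rho(T(x^{*};\beta))\ge \max\mathcal{S}_{\rho_+}(\beta)$ for every $\beta\in\mathbb{R}$, and minimizing both sides over $\beta$ preserves the inequality. Because $q'(x^*)$ is convergent we have $0\le\rho_+<1$; in the nontrivial case $0<\rho_+<1$ this places us in case~1 of \cref{minmax-real-nonnegative-case} (namely \cref{eq:lemma-result}), which gives $\min_{\beta\in\mathbb{R}}\max\mathcal{S}_{\rho_+}(\beta)=1-\sqrt{1-\rho_+}$, attained uniquely at the stated $\beta_{\rm opt}=\frac{1-\sqrt{1-\rho_+}}{1+\sqrt{1-\rho_+}}$. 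Chaining the two facts delivers the claimed lower bound.

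It then remains to justify the uniqueness assertion in the equality case. If the overall minimum $\min_{\beta}\rho(T(x^{*};\beta))$ equals $1-\sqrt{1-\rho_+}$ and is attained at some $\beta^{*}$, then in particular $\max\mathcal{S}_{\rho_+}(\beta^{*})\le 1-\sqrt{1-\rho_+}$. But the proof of \cref{minmax-real-nonnegative-case} establishes that $\max\mathcal{S}_{\rho_+}(\beta)$ is strictly decreasing on $(-\infty,\beta_{\rm opt}]$ and strictly increasing on $[\beta_{\rm opt},\infty)$, so it equals its minimum value $1-\sqrt{1-\rho_+}$ only at $\beta=\beta_{\rm opt}$ and is strictly larger elsewhere; hence $\beta^{*}=\beta_{\rm opt}$.

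I do not anticipate a genuine obstacle here: the statement is a direct corollary of the real-eigenvalue machinery already packaged in \cref{minmax-real-nonnegative-case}, and the argument is structurally identical to that of \cref{thm:lower-bound-ALS}. The only point that warrants care is the uniqueness step, which leans on the strict monotonicity of $\max\mathcal{S}_{\rho_+}(\cdot)$ away from its minimizer rather than on mere attainment of the minimum; this monotonicity is exactly what was shown (for $g_1$ and its reflection on $\beta<-1$) inside the proof of \cref{minmax-real-nonnegative-case}, so no new estimate is needed.
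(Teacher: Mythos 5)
Your proposal is correct and takes essentially the same approach as the paper's own proof: it uses $\rho_+\in\sigma(q')$ to bound $\rho(T(x^{*};\beta))$ below by $\max\mathcal{S}_{\rho_+}(\beta)$ and then invokes case~1 of \cref{minmax-real-nonnegative-case}, exactly as in the proof of \cref{thm:lower-bound-ALS}. The only difference is that you make explicit the uniqueness argument in the equality case (via the monotonicity of $\max\mathcal{S}_{\rho_+}(\cdot)$ away from its minimizer), a detail the paper leaves implicit by citing the lemma's uniqueness claim.
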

\begin{proof}
The proof is similar to that in \cref{thm:lower-bound-ALS}. We use the fact that $\rho_{+} \in \sigma(q')$, and
\begin{equation*}
 \min_{\beta \in\mathbb{R}} \rho(T(x^{*};\beta))= \min_{\beta \in\mathbb{R}}\max\big(\cup_{\mu\in\sigma(q')} \mathcal{S}_{\mu}(\beta)\big)\geq \min_{\beta \in\mathbb{R}}\max\mathcal{S}_{\rho_{+}}(\beta).
\end{equation*}
It is easy to see that $\min_{\beta \in\mathbb{R}}\max\mathcal{S}_{\rho_{+}}(\beta)=1-\sqrt{1-\rho_{q_+}}$, using
\cref{minmax-real-nonnegative-case}.
\end{proof}

\subsection{Proof of \cref{thm:sNGMRE-R-SD} -- Optimal asymptotic convergence factor of sNGMRES-R(1) \yh{applied to fixed-point methods with real Jacobian spectrum}}
\label{subsec:proof4.1}
\ \\
We prepare for the proof of \cref{thm:sNGMRE-R-SD} \yh{for sNGMRES-R(1) applied to SD by first proving \cref{minmax-real-nonnegative-case-NGM} for the general case of fixed-point methods with real Jacobian spectrum}.

\yh{Denote the eigenvalues of $q'$ as $\mu$ (real or complex).} Then it can be shown that the eigenvalues $\lambda$ of $T_N$ in \cref{eq:matrix-step1-NGM} satisfy
\begin{equation}\label{eq:AA-eig-form-NGM}
  \lambda^2-(1+\beta)\mu \lambda+ \beta=0.
\end{equation}
The two roots of the above equation are
\begin{equation}
 \yh{ \lambda_{1,2}}=\frac{(1+\beta)\mu\pm \sqrt{(1+\beta)^2\mu^2-4\beta}}{2}=:y(\beta).
  \label{eq:g1-postive-mu-NGM}
\end{equation}

For any given $\mu$, we define the set
\begin{equation}\label{eq:eig-set-q-prime-NGM}
  \yh{\mathcal{T}_{\mu}(\beta) =\Big\{|\lambda_1|, |\lambda_2| \Big\}.}
\end{equation}
Since the eigenvalues of $q'$ will affect the eigenvalues $\lambda$ of $T_N$ in \cref{eq:AA-eig-form-NGM}, it is useful to know how $\lambda$ changes for a given $\mu \in \sigma(q')$, and what the optimal value of  $\min_{\beta}\max\mathcal{T}_{\mu}(\beta)$  is for a given $\mu$.

\yh{Let us now consider the case of real $\mu$.}
In order to guarantee sNGMRES-R(1) converges, $\lambda<1$ is required. It follows that $\beta\in (-1,1)$, since the product of the two roots of \cref{eq:AA-eig-form-NGM} is $\beta$.
\begin{lemma}\label{minmax-real-nonnegative-case-NGM}
\begin{enumerate}
\item Assume $\mu \in \mathbb{R}$, and $0<|\mu|<1$. Then
\begin{equation*}
  \min_{\beta}\max\mathcal{T}_{\mu}(\beta) =  \frac{|\mu|}{1+\sqrt{1-\mu^2}},
\end{equation*}
if and only if \yh{$\beta$ is taken to be}
\begin{equation*}
  \beta_{\rm opt}(\mu)=\frac{1-\sqrt{1-\mu^2}}{1+\sqrt{1-\mu^2}}.
\end{equation*}
Moreover, for any given $\mu_1, \mu_2 \in \mathbb{R}$,  if $0<|\mu_1|<|\mu_2|<1$, then
\begin{equation}\label{eq:optimal-increase-mu-NGM}
  \max\mathcal{T}_{\mu_1}\big(\beta_{\rm{opt}}(\mu_2)\big)= \min_{\beta}\max \mathcal{T}_{\mu_2}(\beta).
\end{equation}
\item  Assume $\mu \in \mathbb{R}$ and  $|\mu|\geq 1$. Then,
\begin{equation}\label{eq:lemma-NGM-div}
  \min_{\beta}\max \mathcal{T}_{\mu}(\beta) = 1.
\end{equation}
\end{enumerate}
\end{lemma}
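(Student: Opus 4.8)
The plan is to mirror the structure of the proof of \cref{minmax-real-nonnegative-case}, exploiting the one structural difference that for sNGMRES-R(1) the product of the two roots of \cref{eq:AA-eig-form-NGM} equals $\beta$ (rather than $\beta\mu$ as in the sAA(1) case). First I would record a symmetry that lets us assume $\mu>0$: replacing $\mu$ by $-\mu$ in \cref{eq:AA-eig-form-NGM} sends each root $\lambda$ to $-\lambda$, so $\mathcal{T}_{-\mu}(\beta)=\mathcal{T}_{\mu}(\beta)$ and the result depends only on $|\mu|$. The discriminant is $\Delta=(1+\beta)^2\mu^2-4\beta$, and the roots are complex exactly when $\Delta<0$; in that case $\beta>0$ and the roots are complex conjugates, so their common modulus is $\sqrt{\lambda_1\lambda_2}=\sqrt{\beta}$, giving $\max\mathcal{T}_\mu(\beta)=\sqrt{\beta}$ throughout the complex region. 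Solving $\Delta=0$ as a quadratic in $\beta$ produces the two boundary values
\begin{equation*}
\beta_1=\frac{1-\sqrt{1-\mu^2}}{1+\sqrt{1-\mu^2}},\qquad \beta_2=\frac{1+\sqrt{1-\mu^2}}{1-\sqrt{1-\mu^2}}=\frac{1}{\beta_1},
\end{equation*}
so the roots are complex on $[\beta_1,\beta_2]$ and real otherwise; writing $s=\sqrt{1-\mu^2}$ and using $(1-s)(1+s)=\mu^2$ gives $\beta_1=\mu^2/(1+s)^2$, hence $\sqrt{\beta_1}=|\mu|/(1+\sqrt{1-\mu^2})$, the claimed value.

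For Part 1 ($0<\mu<1$) the complex-region value $\sqrt{\beta}$ is increasing, so its minimum there is at $\beta_1$, and it remains to rule out the real region. On $(-1,\beta_1)$ the positive root $g_1(\beta)=\tfrac12\big((1+\beta)\mu+\sqrt{\Delta}\big)$ carries the larger modulus, and a short computation gives
\begin{equation*}
g_1'(\beta)=\frac{1}{2\sqrt{\Delta}}\Big(\mu\sqrt{\Delta}+(1+\beta)\mu^2-2\Big),
\end{equation*}
whose sign I would determine by isolating $\mu\sqrt{\Delta}$ and squaring; the inequality $g_1'(\beta)<0$ then collapses exactly to $\mu^2<1$, so $g_1$ is strictly decreasing on $(-1,\beta_1)$ down to $g_1(\beta_1)=\tfrac12(1+\beta_1)\mu=|\mu|/(1+s)$. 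For $\beta\le-1$ one checks $\max\mathcal{T}_\mu(\beta)\ge\max\mathcal{T}_\mu(-1)=1>|\mu|/(1+s)$, and for $\beta>\beta_2$ the larger real root exceeds $\sqrt{\beta_2}>\sqrt{\beta_1}$, so the global minimum over $\beta\in\mathbb{R}$ is attained uniquely at $\beta_{\rm opt}=\beta_1$, which is the first assertion.

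For the monotonicity statement \cref{eq:optimal-increase-mu-NGM}, the crucial observation is that $\beta_1(\mu)$ is increasing in $|\mu|$ while $\beta_2(\mu)=1/\beta_1(\mu)>1$. Thus for $0<|\mu_1|<|\mu_2|<1$ the point $\beta_{\rm opt}(\mu_2)=\beta_1(\mu_2)$ satisfies $\beta_1(\mu_1)<\beta_1(\mu_2)<1<\beta_2(\mu_1)$, so it lies strictly inside the complex region of $\mu_1$. Since the complex-region value $\sqrt{\beta}$ is \emph{independent of $\mu$}, this yields $\max\mathcal{T}_{\mu_1}(\beta_1(\mu_2))=\sqrt{\beta_1(\mu_2)}=\min_\beta\max\mathcal{T}_{\mu_2}(\beta)$, which is precisely the claimed \emph{equality}. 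This is exactly the point of departure from the sAA(1) analysis of \cref{minmax-real-nonnegative-case}, where the complex-region value $\sqrt{\beta\mu}$ does depend on $\mu$ and the analogous relation is a strict inequality.

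For Part 2 ($|\mu|\ge1$) I would apply the Schur--Cohn (Jury) criterion to the monic quadratic $p(\lambda)=\lambda^2-(1+\beta)\mu\lambda+\beta$: both roots lie in the open unit disk iff $|\beta|<1$, $p(1)>0$, and $p(-1)>0$, and for $\beta\in(-1,1)$ the last two factor as $(1+\beta)(1-\mu)>0$ and $(1+\beta)(1+\mu)>0$, i.e.\ reduce to $|\mu|<1$. Hence for $|\mu|\ge1$ we have $\max\mathcal{T}_\mu(\beta)\ge1$ for every $\beta$, while at $\beta=-1$ equation \cref{eq:AA-eig-form-NGM} degenerates to $\lambda^2-1=0$, giving $\lambda=\pm1$ and $\max\mathcal{T}_\mu(-1)=1$; therefore $\min_\beta\max\mathcal{T}_\mu(\beta)=1$, attained at $\beta=-1$. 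I anticipate the main obstacle to be the real-region monotonicity in Part 1: one must track which of the two real roots carries the larger modulus across the sign changes of the product $\beta$ and the sum $(1+\beta)\mu$ (in particular across $\beta=0$ and $\beta=-1$), and then justify the squaring step in the sign analysis of $g_1'$ (both sides being nonnegative on the relevant range). Once this bookkeeping is settled the algebra reduces cleanly to $\mu^2<1$, and the remaining pieces, namely the complex-region minimum and the Part 2 boundary computation, are immediate.
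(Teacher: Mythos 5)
Your proposal is correct, and for Part 1 --- the heart of the lemma --- it follows essentially the same route as the paper's own proof: split on the sign of the discriminant of \cref{eq:AA-eig-form-NGM}, observe that on the complex-root interval $[\beta_1,\beta_2]$ the common modulus is $\sqrt{\beta}$ (independent of $\mu$, since the product of the roots is $\beta$), show that the dominant real root $g_1$ is strictly decreasing on $(-1,\beta_1)$ via a derivative-sign computation that collapses exactly to $\mu^2<1$, and deduce the equality \cref{eq:optimal-increase-mu-NGM} from the nesting $\beta_1(\mu_1)<\beta_1(\mu_2)<\beta_2(\mu_2)<\beta_2(\mu_1)$ together with the $\mu$-independence of the complex-region value --- this last step is word-for-word the paper's argument, and your remark that this is precisely where the sNGMRES analysis departs from \cref{minmax-real-nonnegative-case} (where the value $\sqrt{\beta\mu}$ does depend on $\mu$, giving a strict inequality) matches the paper's point of view.

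The differences are minor but worth noting. First, you reduce to $\mu>0$ by the sign-flip symmetry $\mu\mapsto-\mu$, $\lambda\mapsto-\lambda$, whereas the paper simply carries $|\mu|$ through the computations; both are fine. Second, you treat $\beta\le-1$ and $\beta>\beta_2$ directly (the product-of-roots bound $\max\mathcal{T}_\mu(\beta)\ge\sqrt{|\beta|}$ makes this immediate), whereas the paper excludes those regions up front by the convergence restriction $\beta\in(-1,1)$ and the observation $\beta_{N,2}>1$; your version is slightly more self-contained since the lemma is stated as a minimum over all real $\beta$. Third, and this is the one genuinely different ingredient: for Part 2 you invoke the Schur--Cohn/Jury criterion, noting $p(1)=(1+\beta)(1-\mu)$ and $p(-1)=(1+\beta)(1+\mu)$, so that for $|\mu|\ge1$ no choice of $\beta$ puts both roots strictly inside the unit disk, and then you exhibit $\beta=-1$ where the roots are $\pm1$. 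The paper instead verifies $\Delta\ge0$ (real roots) and shows $y_1$ is increasing on $[-1,1)$. Your route is cleaner, covers all real $\beta$ in one stroke, and avoids the monotonicity bookkeeping; the paper's route has the merit of reusing the same derivative computation as Part 1. Either way the conclusion \cref{eq:lemma-NGM-div} is established correctly.
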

\begin{proof}
We first consider $0< |\mu|\leq 1$. Denote $\Delta =(1+\beta)^2\mu^2-4\beta$. Note that $\lambda$ in \cref{eq:eig-set-q-prime-NGM} might be real or complex. Thus, we consider two cases as follows.\\
{\bf Complex eigenvalues:} If $\mu^2\leq \frac{4\beta}{(1+\beta)^2}$, then $\Delta \leq 0$.  Moreover, $|y(\beta)|^2=\beta $.   In order to minimize $ \displaystyle\max\mathcal{T}_{\mu}(\beta)$, we only need to solve
\begin{equation}\label{eq:complex-minimization-form-NGM}
  \min_{\beta} |y(\beta)|=\min_{\beta}\sqrt{\beta},
\end{equation}
under the condition that
\begin{equation*}
 s(\beta)=\beta^2+ (2-\frac{4}{\mu^2})\beta +1\leq 0.
\end{equation*}
Note that $(2-\frac{4}{\mu^2})^2-4>0$ since $\mu^2<1$. The two roots of $s(\beta)=0$ are
\begin{equation}\label{eq:complex-domain-ends-NGM}
 \beta_{N,1}(\mu)=\frac{1-\sqrt{1-\mu^2}}{1+\sqrt{1-\mu^2}},\,\, \beta_{N,2}(\mu)=\frac{1+\sqrt{1-\mu^2}}{1-\sqrt{1-\mu^2}}>1.
\end{equation}
So the solution of \cref{eq:complex-minimization-form-NGM} is
\begin{eqnarray*}
  \min_{\beta\in[\beta_{N,1},1]} \sqrt{\beta} &=&\sqrt{\beta_{N,1}(\mu)} \\
  &=&\sqrt{\frac{1-\sqrt{1-\mu^2}}{1+\sqrt{1-\mu^2}}}\\
  &=&\frac{1-\sqrt{1-\mu^2}}{|\mu|}\\
  &=&\frac{|\mu|}{1+\sqrt{1-\mu^2}}.
\end{eqnarray*}
{\bf Real eigenvalues:} When $\beta <\beta_{N,1}(\mu)$ or $\beta>\beta_{N,2}(\mu)$, the $y(\beta)$ are real. However, since $\beta_{N,2}(\mu)>1$, we have $\lambda>1$. Thus, we only consider $-1<\beta<\beta_{N,1}(\mu)$.  Note that
\begin{equation*}
  \max |y(\beta)| =\frac{(1+\beta)|\mu| + \sqrt{(1+\beta)^2\mu^2-4\beta}}{2}=:y_1(\beta).
\end{equation*}
We rewrite $y_1(\beta)$ as
\begin{equation*}
  y_1(\beta) =\frac{(1+\beta)|\mu| + |\mu| \sqrt{\beta^2+(2-\frac{4}{\mu^2})\beta+1}}{2} =\frac{(1+\beta)|\mu| + |\mu|\sqrt{s(\beta)}}{2}.
\end{equation*}
Note that $ y_1(\beta_{N,1}(\mu))=\frac{|\mu|}{1+\sqrt{1-\mu^2}}$.  We claim that $y_1(\beta)$ is decreasing over $(-1,\beta_{N,1}(\mu))$.  In fact, when $\beta \in (-1,\beta_{N,1}(\mu))\subseteq [-1,1)$, $1+\beta<2$. It follows that $1+\beta-\frac{2}{\mu^2}<0$. Note that
\begin{eqnarray*}
  y'_1(\beta) &=& \frac{|\mu|}{2}\Big(1+\frac{1+\beta-\frac{2}{\mu^2}}{\sqrt{(1+\beta)^2-\frac{4\beta}{\mu^2}}}\Big) \\
   &=&\frac{|\mu|}{2}\frac{\sqrt{(1+\beta-\frac{2}{\mu^2})^2+\frac{4}{\mu^2}(1-\frac{1}{\mu^2})}+1+\beta-\frac{2}{\mu^2}}{\sqrt{(1+\beta)^2-\frac{4\beta}{\mu^2}}}  \\
   &<&\frac{|\mu|}{2} \frac{\sqrt{(1+\beta-\frac{2}{\mu^2})^2}+1+\beta-\frac{2}{\mu^2}}{{\sqrt{(1+\beta)^2-\frac{4\beta}{\mu^2}}}}\\
   &=&0,
\end{eqnarray*}
where the last equality is due to $1+\beta-\frac{2}{\mu^2}<0$.

Combining the above two cases, we know that $|y_1(\beta)|$ is decreasing on $[-1,\beta_{N,1}(\mu)]$ and increasing on $[\beta_{N,1}(\mu),\beta_{N,2}(\mu)]$. Thus,
\begin{equation*}
\min_{\beta}\max\mathcal{T}_{\mu}(\beta)=y_1(\beta_{N,1}(\mu))=\frac{|\mu|}{1+\sqrt{1-\mu^2}}.
\end{equation*}
Next we prove the second statement. From \cref{eq:complex-domain-ends-NGM} and \yh{the fact that $\beta_{N,1}(\mu)\beta_{N,2}(\mu)=1$, we know that for any given $\mu_1$ and $\mu_2$ such that $|\mu_1|<|\mu_2|$, }
\begin{equation*}
  \beta_{N,1}(\mu_1) < \beta_{N,1}(\mu_2) <\beta_{N,2}(\mu_2)<\beta_{N,2}(\mu_1).
\end{equation*}
It follows that for $\beta\in [\beta_{N,1}(\mu_2),\beta_{N,2}(\mu_2)]$, the  $\lambda$ corresponding to $\mu_1$ in \cref{eq:AA-eig-form-NGM} are complex. Thus, \begin{equation*}
\max \mathcal{T}_{\mu_1}\big(\beta_{N,1}(\mu_2)\big) =\sqrt{\beta_{N,1}(\mu_2)} =\min_{\beta}\max \mathcal{T}_{\mu_2}(\beta),
\end{equation*}
which is the desired result.

Finally we consider $|\mu|> 1$. Recall $\Delta =(1+\beta)^2\mu^2-4\beta=\mu^2\big((1+\beta)^2-4\beta/\mu^2\big)$. We claim that $\Delta\geq 0$. For $\beta<0$, this is obvious. When $\beta\geq 0$, $\Delta = \mu^2\big((1-\beta)^2+4\beta(1-\frac{1}{\mu^2})\big)\geq0$. This means $\lambda$ is real.

When $-1<\beta<1$,
\begin{equation*}
  \max \mathcal{T}_{\mu}(\beta) =y_1(\beta).
\end{equation*}
Since $1-\frac{1}{\mu^2}>0$, we have
\begin{eqnarray*}
   y'_1(\beta) &=& \frac{|\mu|}{2}\Big(1+\frac{1+\beta-\frac{2}{\mu^2}}{\sqrt{(1+\beta)^2-\frac{4\beta}{\mu^2}}}\Big) \\
   &=&\frac{|\mu|}{2}\frac{\sqrt{(1+\beta-\frac{2}{\mu^2})^2+\frac{4}{\mu^2}(1-\frac{1}{\mu^2})}+1+\beta-\frac{2}{\mu^2}}{\sqrt{(1+\beta)^2-\frac{4\beta}{\mu^2}}}  \\
   &>&\frac{|\mu|}{2} \frac{\sqrt{(1+\beta-\frac{2}{\mu^2})^2}+1+\beta-\frac{2}{\mu^2}}{{\sqrt{(1+\beta)^2-\frac{4\beta}{\mu^2}}}}\\
   &\geq&0.
\end{eqnarray*}
This means that $g_1(\beta)$ is increasing over $[-1,1)$. Thus,
\begin{equation*}
\max_{\beta\in \mathbb{R}} \mathcal{T}_{\mu}(\beta) =\max_{\beta=-1} \mathcal{T}_{\mu}(\beta)=1.
\end{equation*}
Clearly, when $|\mu|=1, \max_{\beta\in \mathbb{R}} \mathcal{T}_{\mu}(\beta)=1$.
\end{proof}

From the proof of  \cref{minmax-real-nonnegative-case-NGM}, we know that when $\beta\in(-1,0]$, $\lambda$ is real and $\max\mathcal{T}_{\mu}(\beta)=\frac{(1+\beta)|\mu|+\sqrt{(1+\beta)^2\mu^2-4\beta}}{2}$ is a decreasing function of $\beta$ in $[-1,0]$. Thus, we will only consider $\beta\in[0,1]$ in sNGMRES-R(1)-SD.\\

\noindent
\textbf{Proof of \cref{thm:sNGMRE-R-SD}:}
\begin{proof}
From \cref{eq:optimal-increase-mu-NGM} in \cref{minmax-real-nonnegative-case-NGM}, we only need to minimize $\rho_{q'}$, the spectral radius of $q'$, since $\frac{|\mu|}{1+\sqrt{1-\mu^2}}$ is an increasing function of $|\mu|$.  Recall that $q'= I-\alpha H$, so $\displaystyle\min_{\alpha} \rho_{q'}(\alpha)=\frac{L-\ell}{L+\ell}$ if and only if $\alpha=\frac{2}{L+\ell}$. Thus, from   \cref{minmax-real-nonnegative-case-NGM}, we have
\begin{equation*}
  \rho^{*}_{\rm sNGMRES-R(1)-SD} =\min_{\alpha,\beta}\rho(T_N) =\frac{\rho_{q'}}{1+\sqrt{1-\rho^2_{q'}}},
\end{equation*}
if and only if
\begin{equation*}
   \alpha= \alpha_N^* =\frac{2}{L+\ell}, \, \beta=\beta_N^{*}=\frac{1-\sqrt{1-\rho_{q'}^2}}{1+\sqrt{1-\rho_{q'}^2}}.
\end{equation*}
Since $\rho_{q'} =\frac{L-\ell}{L+\ell}$,
\begin{eqnarray*}
   \frac{\rho_{q'}}{1+\sqrt{1-\rho^2_{q'}}}&<& \frac{\rho_{q'}}{1+\sqrt{1-\rho_{q'}}} \\
    &=&1-\sqrt{1-\rho_{q'}} \\
    &=& 1-\sqrt{\frac{2\ell}{L+\ell}}\\
    &<& 1-\sqrt{\frac{4\ell}{3L+\ell}}.
\end{eqnarray*}
Note also that
\begin{eqnarray*}
 \frac{\rho_{q'}}{1+\sqrt{1-\rho^2_{q'}}}&=&\frac{(L-\ell)/(L+\ell)}{1+\sqrt{1-\big((L-\ell)/(L+\ell)\big)^2}}\\
 &=&\frac{L-\ell}{L+\ell+2\sqrt{\ell L}}\\
 &=&\frac{\bar{\kappa}-1}{\bar{\kappa}+1+2\sqrt{\bar{\kappa}}}\\
 &=&\frac{\sqrt{\bar{\kappa}}-1}{\sqrt{\bar{\kappa}} +1}.
\end{eqnarray*}
Since $\rho^{*} = \sqrt{\beta^{*}_{N}}$, $\beta^{*}_{N} =\Big(\frac{\sqrt{\bar{\kappa}}-1}{\sqrt{\bar{\kappa}}+1}\Big)^2$.
\end{proof}
\begin{remark}
In \cite{hong2018accelerating} the asymptotic convergence factor is computed for a two-grid version of the Sequential Subspace Optimization method to accelerate multigrid optimization (SESOP-MG) with window size 1 for quadratic objectives. In that work, the convergence factor is determined by analyzing a $2 \times 2$ coarse-grid correction block matrix similar to \cref{eq:matrix-step1-NGM}.
The same optimal convergence parameters are obtained using a different proof technique. Indeed, the 1-step SESOP acceleration method is related to sNGMRES-R(1).
\end{remark}

\begin{remark}
In \cite{MR703121}, upper and lower bounds on the convergence factor of stationary $k$-step iterative methods for linear systems are discussed. In the case of $k=2$, the optimal result using the theory of Euler methods is the same as what we presented here after a transformation of the parameters. Our optimal results can be retrieved with some effort from these results in \cite{MR703121}, but
our proof is substantially shorter and more elementary, and provides more specific insight in the variation of the parameters as $\beta$ varies.
\end{remark}
\subsection{Lower and upper bounds on the optimal asymptotic convergence factor for sNGMRES-R(1) \yh{applied to fixed-point methods with complex spectrum} -- extension of Section \ref{subsec:optimal}}
\label{subsec:sNGMRES-R(1)-ALS-bounds}
\ \\
\yh{Here we consider sNGMRES-R(1) acceleration in the case that $q'$ has complex specturm, e.g., ALS applied to canonical tensor decomposition.}

First, using \cref{minmax-real-nonnegative-case-NGM}, we can obtain a lower bound on the optimal convergence
factor of \yh{sNGMRES-R(1)}.

\begin{theorem}[lower bound for fixed-point methods with complex Jacobian spectrum]\label{thm:similar-lower-bound-NGMRES-ALS}
 \yh{
Let $x^*$ be a fixed point of iteration (\ref{eq:fixed-point}).
Let the spectral radius of $q'(x^*)$ be $\rho_{q'}$. Assume that there exists a real eigenvalue $\mu$ of $q'(x^*)$ such that $\rho_{q'}=|\mu|$. Then the optimal asymptotic convergence factor of the sNGMRES-R(1) iteration \cref{system-form-NGM} with Jacobian matrix $T_N$ defined in (\ref{eq:matrix-step1-NGM}) is bounded below by}
\begin{equation}\label{eq:similar-lower-bound-NGMRES-ALS}
 \min_{\beta \in\mathbb{R}} \rho(T_N(x^{*};\beta)) \geq  \frac{\rho_{q'}}{1+\sqrt{1-\rho^2_{q'}}}=:\rho_{p,N},
\end{equation}
and if the equality holds, then \yh{the unique optimal $\beta$ is given by}
\begin{equation}\label{opt-beta-real-NGMRES-ALS}
  \beta_{\rm opt}=\frac{1-\sqrt{1-\rho^2_{q'}}}{1+\sqrt{1-\rho^2_{q'}}}.
\end{equation}
\end{theorem}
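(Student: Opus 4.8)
The plan is to mirror the proof of \cref{thm:lower-bound-ALS} for sAA(1), replacing the single-eigenvalue minimax analysis of \cref{minmax-real-nonnegative-case} by its sNGMRES-R(1) counterpart \cref{minmax-real-nonnegative-case-NGM}. The essential observation is that the spectral radius of $T_N(x^*;\beta)$ decomposes over the spectrum of $q'$: since each $\mu\in\sigma(q')$ contributes the two roots of \cref{eq:AA-eig-form-NGM} to $\sigma(T_N)$, we have $\rho(T_N(x^*;\beta))=\max\big(\cup_{\mu\in\sigma(q')}\mathcal{T}_{\mu}(\beta)\big)$. Bounding a maximum over the whole spectrum below by the contribution of one distinguished eigenvalue then reduces the problem to the one-dimensional minimax already solved in the lemma.

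Concretely, I would proceed as follows. First, by hypothesis there is a real $\mu\in\sigma(q')$ with $|\mu|=\rho_{q'}$, so that $\mathcal{T}_{\mu}(\beta)\subseteq\cup_{\mu'\in\sigma(q')}\mathcal{T}_{\mu'}(\beta)$ for every $\beta$; taking maxima and then minimizing over $\beta$ gives
$$\min_{\beta\in\mathbb{R}}\rho(T_N(x^*;\beta))=\min_{\beta\in\mathbb{R}}\max\big(\cup_{\mu'\in\sigma(q')}\mathcal{T}_{\mu'}(\beta)\big)\ge\min_{\beta\in\mathbb{R}}\max\mathcal{T}_{\mu}(\beta).$$
Second, since $\rho_{q'}<1$ we have $0<|\mu|<1$, so case~1 of \cref{minmax-real-nonnegative-case-NGM} applies and yields $\min_{\beta}\max\mathcal{T}_{\mu}(\beta)=|\mu|/(1+\sqrt{1-\mu^2})$, attained uniquely at $\beta_{\rm opt}(\mu)=(1-\sqrt{1-\mu^2})/(1+\sqrt{1-\mu^2})$. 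Third, because the lemma's expressions depend on $\mu$ only through $|\mu|$ and $\mu^2$, I may substitute $|\mu|=\rho_{q'}$ and $\mu^2=\rho_{q'}^2$ to obtain exactly $\rho_{p,N}=\rho_{q'}/(1+\sqrt{1-\rho_{q'}^2})$ together with the stated $\beta_{\rm opt}$. For the equality clause, if the left-hand side equals $\rho_{p,N}$ and is attained at some $\beta^*$, then $\max\mathcal{T}_{\mu}(\beta^*)\le\rho(T_N(x^*;\beta^*))=\rho_{p,N}=\min_{\beta}\max\mathcal{T}_{\mu}(\beta)$, which forces $\max\mathcal{T}_{\mu}(\beta^*)$ to attain its minimum; the uniqueness asserted in \cref{minmax-real-nonnegative-case-NGM} then pins down $\beta^*=\beta_{\rm opt}(\mu)$.

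I do not expect a genuine obstacle here, since the result is only a one-sided bound and the analytic heavy lifting is entirely contained in \cref{minmax-real-nonnegative-case-NGM}. The one point worth emphasizing, in contrast to the sAA(1) analysis where positive and negative eigenvalues required the separate \cref{minmax-real-nonnegative-case,minmax-real-negative-case}, is that the sNGMRES-R(1) root equation \cref{eq:AA-eig-form-NGM} has constant term $\beta$ rather than $\beta\mu$, so that $|\lambda|^2=\beta$ for complex roots and $\max\mathcal{T}_{\mu}$ depends on $\mu$ only through $|\mu|$; hence the sign of the distinguished real eigenvalue is immaterial and a single lemma suffices. The reason only a lower bound rather than equality can be claimed is that $\beta_{\rm opt}(\mu)$ minimizes only the contribution of the single eigenvalue $\mu$, while the complex eigenvalues of $q'$ may push $\max\big(\cup_{\mu'}\mathcal{T}_{\mu'}(\beta_{\rm opt}(\mu))\big)$ strictly above $\rho_{p,N}$; characterizing when equality holds would require controlling those complex contributions, which is precisely the content of the further bounds in \cref{thm:lowe-upper-bound}.
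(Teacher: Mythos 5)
Your proposal is correct and takes essentially the same route as the paper's own proof: bound $\min_{\beta}\rho(T_N(x^*;\beta))$ from below by the minimax contribution of the distinguished real eigenvalue with $|\mu|=\rho_{q'}$, then invoke case 1 of \cref{minmax-real-nonnegative-case-NGM} (applicable since $0<\rho_{q'}<1$) to evaluate that single-eigenvalue minimax and identify the unique optimal $\beta$. Your added observations---that $\mathcal{T}_{\mu}(\beta)$ depends on $\mu$ only through $|\mu|$, so the sign of the eigenvalue is immaterial and one lemma suffices, and the explicit argument for the equality/uniqueness clause---are points the paper's terser proof leaves implicit.
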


\begin{proof}
Since $\rho_{q'} \in \sigma(q')$,
\begin{equation*}
 \min_{\beta \in\mathbb{R}} \rho(T_N(x^{*};\beta))= \min_{\beta \in\mathbb{R}}\max\big(\cup_{\mu\in\sigma(q')} \mathcal{T}_{\mu}(\beta)\big)\geq \min_{\beta \in\mathbb{R}}\max\mathcal{T}_{\rho_{q'}}(\beta).
\end{equation*}
Based on $0<\rho_{q'}<1$ and \cref{minmax-real-nonnegative-case-NGM}, we have
\begin{equation*}
   \min_{\beta \in\mathbb{R}}\max\mathcal{T}_{\rho_{q'}}(\beta)=\frac{\rho_{q'}}{1+\sqrt{1-\rho^2_{q'}}},
\end{equation*}
which  is the desired result.
\end{proof}

In \yh{the case of ALS for canonical tensor decomposition,} our numerical tests in Section \ref{sec:num-sec} indicate
that $\rho_{q'}=\mu$ for sAA(1)-ALS, supporting \cref{conjec1-sAA-ALS}, but we do
find numerically that the inequality \cref{eq:similar-lower-bound-NGMRES-ALS} is non-strict for ALS, so there is no conjecture equivalent to
\cref{conjec2-sAA-ALS} for sNGMRES-R(1)-ALS.

So we pursue some further lower and upper bounds on the optimal convergence factor for sNGMRES-R(1) that may be more useful
for the case of fixed-point methods with complex Jacobian spectrum. Assume that the spectrum of $q'$ is bounded by a rectangle
\begin{equation}\label{eq:rectange}
R_{r_1,r_2}=\Big\{z \in\mathbb{C}| -r_1\leq {\rm Re}(z)\leq r_1, -r_2\leq {\rm Im}(z)\leq r_2\Big\},
\end{equation}
where $0\leq r_1,r_2<1$. In the case of ALS, the requirement that $r_1,r_2<1$ is reasonable, since ALS is convergent, that is, $\rho(q')<1$, see  \cite{Uschmajew2012}. In the literature, there is some research on the spectrum $\sigma(q')$ for $k$-step stationary  iterative methods, see \cite{MR703121,Varga1986}. We take advantage of the existing results there and  apply them to our problems. \yh{The following result is true for the
general case where the eigenvalues of $q'(x^*)$ are bounded by \cref{eq:rectange}.}
\begin{theorem}\label{thm:lowe-upper-bound}
The optimal asymptotic convergence factor $\rho$ obtained for sNGMRES-R(1) \yh{applied to a fixed-point method with Jacobian spectrum
satisfying (\ref{eq:rectange})} is bounded by
\begin{itemize}
\item If $r_1>r_2$, then
\begin{equation*}\label{eq:NR-lower-upper-bound-A}
\frac{2\eta_1}{r_2\eta_0-\sqrt{(r_2 \eta_0)^2-4\eta_1}}=\delta_1  <\rho< \delta_2(a)=\frac{2\tau_1}{-a\tau_0+\sqrt{(a \tau_0)^2+4\tau_1}},
\end{equation*}
where
\begin{eqnarray*}
\eta_0&=&\frac{2}{1+\sqrt{1-r_1^2+r_2^2}},\quad \eta_1=1-\eta_0,\\
\tau_0&=&\frac{2}{1+\sqrt{1-a^2+b^2}},  \quad \tau_1=1-\tau_0,
\end{eqnarray*}
with  $b=\frac{ar_2}{\sqrt{a^2-r_1^2}},$ and $r_1< a<1$.
\item If $r_1<r_2$, then
\begin{equation*}\label{eq:NR-lower-upper-bound-B}
\frac{2\eta_1}{r_2\eta_0-\sqrt{(r_2 \eta_0)^2-4\eta_1}}=\delta_1 <\rho< \delta_2(a)=\frac{2\tau_1}{b\tau_0-\sqrt{(b \tau_0)^2-4\tau_1}},
\end{equation*}
where
\begin{eqnarray*}
\eta_0&=&\frac{2}{1+\sqrt{1-r_1^2+r_2^2}},\quad \eta_1=1-\eta_0,\\
\tau_0&=&\frac{2}{1+\sqrt{1-a^2+b^2}},  \quad \tau_1=1-\tau_0,
\end{eqnarray*}
with  $b=\frac{ar_2}{\sqrt{a^2-r_1^2}},$ and $r_2< a<1$.
\item If $r_1=r_2$, then
\begin{equation*}
 \frac{r_1}{1+\sqrt{1-r^2_1}} =\delta_1< \rho.
\end{equation*}

\end{itemize}
\end{theorem}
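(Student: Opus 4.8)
The plan is to recast the inner min--max as a geometric containment problem. For fixed $\beta$ the two roots of \cref{eq:AA-eig-form-NGM} satisfy $\lambda_1\lambda_2=\beta$, and eliminating the square root yields the Joukowski-type relation $\mu=\frac{1}{1+\beta}\big(\lambda+\frac{\beta}{\lambda}\big)$. Writing $\lambda=\rho e^{i\theta}$ shows that the level set $\{\mu:\max\mathcal{T}_\mu(\beta)=\rho\}$ is an ellipse $E_\rho$ with real and imaginary semi-axes $\frac{\rho+\beta/\rho}{1+\beta}$ and $\frac{|\rho-\beta/\rho|}{1+\beta}$, and that all these ellipses are confocal, with foci $\pm\frac{2\sqrt{\beta}}{1+\beta}$ (real when $\beta>0$, imaginary when $\beta<0$). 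Since $\max\mathcal{T}_\mu(\beta)$ increases as $\mu$ moves to outer confocal ellipses, the optimal factor is $\min_\beta\rho^\star(\beta)$, where $\rho^\star(\beta)$ is the parameter of the smallest confocal ellipse enclosing $\sigma(q')$. The bridge I will use throughout is monotonicity under containment: if $S\subseteq R_{r_1,r_2}\subseteq\mathcal{E}$, then $\rho(S)\le\rho\le\rho(\mathcal{E})$, since enlarging the set over which the max is taken only enlarges the min over $\beta$.

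For the lower bound I would inscribe in $R_{r_1,r_2}$ the ellipse $S$ with semi-axes $(r_1,r_2)$, which touches the four edge midpoints and hence lies inside the rectangle. When $r_1\neq r_2$ this ellipse is itself a member of a confocal family (foci $\pm\sqrt{r_1^2-r_2^2}$, real if $r_1>r_2$ and imaginary if $r_1<r_2$), so $\rho(S)$ is obtained by matching $\beta$ to those foci; a short computation shows $\rho(S)$ is the positive root of $z^2-r_2\eta_0 z+\eta_1=0$, which is exactly $\delta_1$. Because the corners $(\pm r_1,\pm r_2)$ lie strictly outside $S$, the inequality $\delta_1<\rho$ is strict. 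In the degenerate case $r_1=r_2$ the inscribed ellipse is a disk and the confocal matching collapses; there I would instead use the single real eigenvalue $\mu=r_1\in\sigma(q')$ together with \cref{minmax-real-nonnegative-case-NGM} to obtain $\min_\beta\max\mathcal{T}_{r_1}(\beta)=\frac{r_1}{1+\sqrt{1-r_1^2}}$, with strictness because a single point is a proper subset of the square.

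For the upper bound I would circumscribe $R_{r_1,r_2}$ by the one-parameter family of ellipses through the corner $(r_1,r_2)$, namely $\mathcal{E}_{a,b}$ with $b=\frac{ar_2}{\sqrt{a^2-r_1^2}}$ and $a$ in the admissible range ($r_1<a<1$ when $r_1>r_2$, and $r_2<a<1$ when $r_1<r_2$); the containment test $A\ge a,\,B\ge b$ for nested origin-centered ellipses guarantees $R_{r_1,r_2}\subseteq\mathcal{E}_{a,b}$. Applying the same confocal-matching computation to $\mathcal{E}_{a,b}$, now with $\tau_0,\tau_1$ in place of $\eta_0,\eta_1$, gives $\rho(\mathcal{E}_{a,b})=\delta_2(a)$, whence $\rho<\delta_2(a)$ for every admissible $a$, after which one may optimize over $a$. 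Here I would lean on the classical optimal-factor results for elliptic spectral regions of stationary two-step methods in \cite{MR703121,Varga1986} rather than rederive them from scratch.

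The main obstacle, and the source of the three-way case split, is the precise evaluation of $\rho(\cdot)$ on an elliptic region together with the monotonicity claim. The delicate point is that a wide ellipse ($a>b$) matches a confocal family with real foci ($\beta>0$), whereas a tall ellipse ($b>a$) matches one with imaginary foci ($\beta<0$), and the two regimes produce the opposite sign of the constant term ($+\eta_1$ versus $-\tau_1$) in the defining quadratic. Keeping this bookkeeping consistent, verifying which semi-axis constraint is binding in each regime, and confirming that the circumscribing optimization stays in the admissible interval are where the real work lies; the case $r_1=r_2$ must be singled out precisely because the matching degenerates and only the lower bound survives.
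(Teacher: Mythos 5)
Your proposal is, in substance, the same route as the paper's proof: the paper's entire argument is a pointer to the Euler-method theory of \cite{MR703121} for $k=2$ (with $\mu_0=1+\beta$, $\mu_1=0$, $\mu_2=-\beta$), and what you write out --- the Joukowski relation $\mu=\frac{1}{1+\beta}\bigl(\lambda+\beta/\lambda\bigr)$, the confocal elliptic level sets of the root radius, evaluation of the optimal factor on an ellipse by matching $\beta$ to the foci, and the inscribed/circumscribed sandwich for the rectangle --- is precisely the content hidden behind that citation. Your computations also check out: for an ellipse with real semi-axis $A$ and imaginary semi-axis $B$ the matched factor is $\frac{A+B}{1+\sqrt{1-A^2+B^2}}$; with $(A,B)=(r_1,r_2)$ this is exactly $\delta_1$ (the relevant root of $z^2-r_2\eta_0 z+\eta_1=0$), and with the corner ellipse $(a,b)$ it equals \emph{both} printed forms of $\delta_2(a)$ --- rationalizing $\frac{2\tau_1}{-a\tau_0+\sqrt{(a\tau_0)^2+4\tau_1}}$ gives $\frac{a\tau_0+\sqrt{(a\tau_0)^2+4\tau_1}}{2}=\frac{a+b}{1+\sqrt{1-a^2+b^2}}$, and likewise for the form with $b\tau_0$. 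So the wide-versus-tall sign bookkeeping you single out as the main difficulty is in fact benign; the case split in the statement is cosmetic.

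The one genuine issue is the direction of your containment bridge for the \emph{lower} bound. The inequality $\rho(S)\le\rho$ holds only if $\rho$ denotes the min--max factor of the whole rectangle $R_{r_1,r_2}$ treated as the spectral region; under the literal hypothesis of \cref{eq:rectange}, where $\sigma(q')$ is merely \emph{contained} in $R_{r_1,r_2}$, the lower bound is false in general (take $\sigma(q')=\{0\}$, for which $\rho=0$). This imprecision is inherited from the theorem statement itself (and the paper's numerics, which compare $\delta_1$ against the brute-force factor of the actual ALS spectrum, are covered by it only empirically), but your write-up makes the slip concrete in the square case when you invoke ``the single real eigenvalue $\mu=r_1\in\sigma(q')$'': nothing in the hypotheses puts $r_1$ in the spectrum, so you must either adopt the region reading (use $r_1\in R_{r_1,r_2}$ together with \cref{minmax-real-nonnegative-case-NGM}) or add a richness assumption on $\sigma(q')$. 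Two smaller corrections: for $r_1=r_2$ your own machinery yields the sharper inscribed-disk bound $\rho\ge r_1$ (optimal $\beta=0$), strictly better than the stated $\frac{r_1}{1+\sqrt{1-r_1^2}}$, which is the inscribed-segment value; and the corner-ellipse construction does not actually degenerate for squares --- the absence of an upper bound there is a choice of presentation, not a necessity. Indeed, since an origin-centered axis-aligned ellipse contains $R_{r_1,r_2}$ exactly when it contains the corners, the smallest member of any confocal family containing $R_{r_1,r_2}$ passes through the corners, from which one can show $\min_\beta\max_{\mu\in R_{r_1,r_2}}\max\mathcal{T}_\mu(\beta)=\min_a\delta_2(a)$; so under the region reading the right-hand inequality is strict only for non-optimal $a$ and becomes an equality at the optimizing $a$.
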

\begin{proof}
The above results are based on \cite{MR703121}, especially the discussion in \cite[{\rm Section} 6]{MR703121}, in which the authors discuss $k$=2, and examples 1,\,2,\,7  in \cite[{\rm Section} 9]{MR703121} and setting $\mu_0=1+\beta$, $\mu_1=0$, and $\mu_2=-\beta$. We omit the details here.
\end{proof}
Our numerical results in Section \ref{subsubsec:sNGMRES(1)} confirm that the lower bounds of \cref{thm:lowe-upper-bound} are substantially tighter than the lower bound of \cref{thm:similar-lower-bound-NGMRES-ALS}.

\begin{remark}
Note that $\delta_1$ is a function of $r_1$ and $r_2$. It can be shown that $\delta_1$ is an increasing function of both $r_1$ and $r_2$.
Note that the upper bound $\delta_2$ is a function of $a$. We can numerically optimize $a$ to obtain a sharp bound, see
\cref{table:Lower-upper-NGMRES} in Section \ref{subsubsec:sNGMRES(1)}.
Although  \cref{thm:lowe-upper-bound} \yh{offers convergence factor bounds that can be applied to sNGMRES-R(1)-ALS,}  it does not tell us how to choose the parameter $\beta$ to achieve these bounds. This remains an open question.
\end{remark}

\subsection{Relation between sNGMRES and sNGMRES-R -- extension of Section \ref{sec:convergence-NGMRES}}\label{sub:NGMRES-NGMRES-R}
\ \\
Here we explain how the results described in Section \ref{sec:convergence-NGMRES} for the reduced sNGMRES-R iteration
of Eq.\ \cref{eq:sNGMRES-R} translate to sNGMRES of Eq.\ \cref{eq:sNGMRES}.

First, recall sNGMRES(0)-SD, which reads
\begin{eqnarray*}
  x_{k+1} &=&q(x_k) + \beta(q(x_k)-x_k)\\
    &=&  x_k-\alpha_0 \nabla f(x_k) +\beta(x_k-\alpha_0 \nabla f(x_k)-x_k)\\
    &=&x_k-\alpha_0(1+\beta)\nabla f(x_k)\\
    &=&x_k -\alpha\nabla f(x_k),
\end{eqnarray*}
where $\alpha =\alpha_0(1+\beta)$. It can be seen that NGMRES(0)-SD is a version of SD, with a modified step length.

Next, when $m=1$, sNGMRES(1)-SD reads
\begin{align}
  x_{k+1} &= q(x_k) +\beta_1(q(x_k) -x_k) +\beta_2(q(x_k)-x_{k-1})\nonumber\\
  &=(1+\beta_1+\beta_2)q(x_k) -\beta_1 x_k -\beta_2 x_{k-1}.\label{eq:NGMRES(1)-special}
\end{align}
If we let $\beta_1=0$, then \cref{eq:NGMRES(1)-special} is the reduced sNGMRES-R(1)-SD  method of \cref{eq:M-NGMRES(1)}. This means that the optimal convergence factor for sNGMRES(1)-SD cannot be worse than that of sNGMRES-R(1)-SD. However, we may wonder whether we can optimize
parameters in \cref{eq:NGMRES(1)-special} to obtain a better convergence factor than for sNGMRES-R(1)-SD.  We consider $T_N(q'(x^*))$ for \cref{eq:NGMRES(1)-special}, where
\begin{equation*}
  T_N =\begin{bmatrix}
  (1+\beta_1+\beta_2)q'-\beta_1I & -\beta_2 I\\
   I  & 0
  \end{bmatrix}.
\end{equation*}
  The eigenvalues of $T_N$, denoted by $\lambda$, satisfy
  \begin{equation*}
  \lambda^2-\big((1+\beta_1+\beta_2)\mu-\beta_1\big)\lambda +\beta_2 =0.
  \end{equation*}
We claim that $\min\rho(T_N)$ is the same as the minimum of  sNGMRES-R(1)-SD. Recall  that $\mu  =1-\alpha_0\xi \in \sigma(q'_{SD})$, where $\xi$ is an eigenvalue of $H$. Then,
\begin{eqnarray*}
&&\lambda^2-\big((1+\beta_1+\beta_2)\mu-\beta_1\big)\lambda +\beta_2\\
&=&\lambda^2-\big((1+\beta_1+\beta_2)-\alpha_0(1+\beta_1+\beta_2)\xi-\beta_1\big)\lambda +\beta_2\\
&=&\lambda^2-\big(1+\beta_2-\alpha_0(1+\beta_1+\beta_2)\xi\big)\lambda +\beta_2\\
&=&\lambda^2-(1+\beta_2)\Big(1-\alpha_0\frac{1+\beta_1+\beta_2}{1+\beta_2}\xi\Big)\lambda+\beta_2.
\end{eqnarray*}
If we let $\beta_2=\beta$ and $\alpha_0\frac{1+\beta_1+\beta_2}{1+\beta_2}=\alpha$, then
\begin{eqnarray*}
 && \lambda^2-(1+\beta_2)\Big(1-\alpha_0\frac{1+\beta_1+\beta_2}{1+\beta_2}\xi\Big)\lambda+\beta_2 \\
  &=& \lambda^2-(1+\beta)\Big(1-\alpha\xi\Big)\lambda+\beta_2\\
  &=& \lambda^2-(1+\beta)\mu\lambda+\beta,
\end{eqnarray*}
which is the same as \cref{eq:AA-eig-form-NGM}. This indicates that the optimized result of $\min(\rho(T_N))$ is the same as for sNGMRES-R(1)-SD. In conclusion, for SD, the optimal convergence factors for sNGMRES-R(1)-SD and sNGMRES(1)-SD are the
same.

Next, we move on to the discussion of sNGMRES-ALS. Let $1-\varsigma \in \sigma(q'_{ALS})$.

When $m=0$, sNGMRES(0)-ALS is
\begin{equation*}
  x_{k+1} = q(x_k)+\beta(q(x_k)-x_k).
\end{equation*}
The eigenvalues of $T_N$ satisfy
\begin{equation*}
  \lambda =1 - (1+\beta)\varsigma,
\end{equation*}
so we can interpret sNGMRES(0)-ALS as a damped version of ALS with weight $1+\beta$.

When $m=1$, similar as for SD, the eigenvalues of $T_N$ satisfy
\begin{equation}\label{eq:ALS-CNGMRES}
\lambda^2-(1+\beta_2)\Big(1-\frac{1+\beta_1+\beta_2}{1+\beta_2}\varsigma\Big)\lambda+\beta_2=0.
\end{equation}
Compared with $\lambda^2-(1+\beta)(1-\varsigma)\lambda+\beta$, the roots of \cref{eq:ALS-CNGMRES} can be treated as the eigenvalues of  damped sNMGRES-R(1)-ALS with damping parameter, $\frac{1+\beta_1+\beta_2}{1+\beta_2}$. Thus, the performance of sNGMRES(1)-ALS will not be worse than that of sNGMRES-R(1)-ALS, if the damping parameter is chosen optimally.

For general $m$, it is easy to see that the performance of optimally tuned sNGMRES($m$)-ALS cannot be worse than sNGMRES-R($m$)-ALS, since sNGMRES($m$)-ALS has one more free parameter than sNGMRES-R($m$)-ALS, and if we set this extra parameter to zero, then sNGMRES($m$)-ALS reduces to sNGMRES-R($m$)-ALS.

\subsection{Comparison of optimal asymptotic convergence factors for accelerated SD in \cref{comparsion-ASD-vs-NGMRESSD} of Section \ref{subsec:optfac}}

\begin{figure}[H]
\centering
\includegraphics[width=0.6\linewidth]{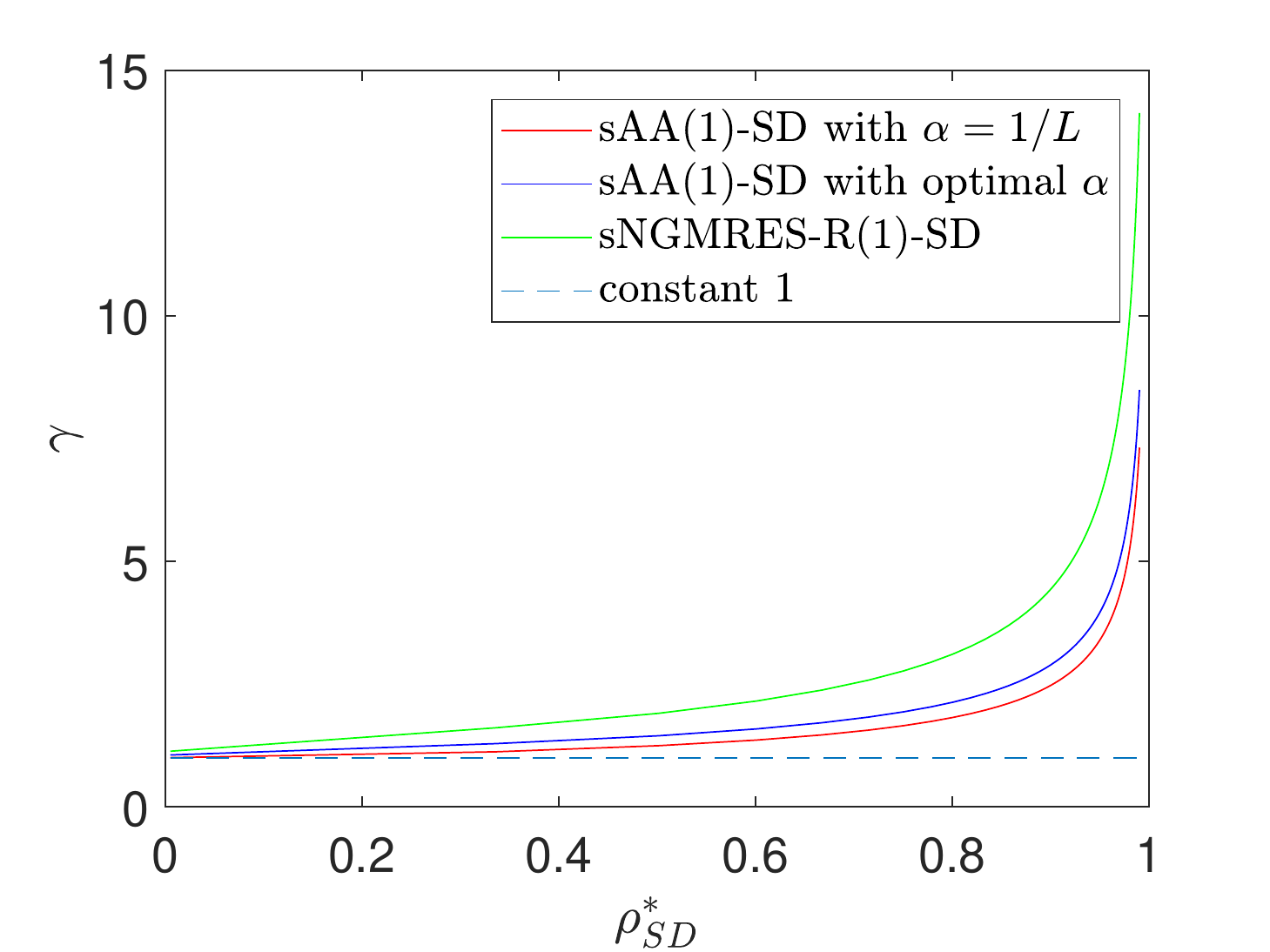}
\caption{Acceleration ratio compared to steepest descent method} \label{acce-ratio-plot}
\end{figure}

\ \\
For the results in \cref{comparsion-ASD-vs-NGMRESSD} on accelerating SD, we define the acceleration ratio compared with SD with optimal step length as
\begin{equation*}
  \gamma =\frac{{\rm log}(\rho^*)}{{\rm log}(\rho^*_{SD})},
\end{equation*}
where $\rho^*$ is the optimal convergence factor for the other methods as in  \cref{comparsion-ASD-vs-NGMRESSD}.
\cref{acce-ratio-plot} shows the acceleration ratios for sAA(1)-SD with $\alpha=\frac{1}{L}$ and optimal $\alpha$, and sNGMRES-R(1)-SD. Note that, as $\rho^{*}_{SD}$ approaches 1 and the problem becomes more ill-conditioned and harder to solve, the acceleration methods greatly improve the performance.

\subsection{sAA(2) and sNGMRES-R(2)  -- extension of Section \ref{sec:convergence-NGMRES}}
\ \\
Here we extend the theoretical results on stationary AA and NGMRES methods with optimal asymptotic convergence factors from
window size $m=1$ to $m=2$.

For sAA(2) and sNGMRES-R(2), it is more complicated to analyze the spectral radius of $T$ and $T_N$ in
\cref{eq:matrix-step1,eq:matrix-step1-NGM}
than for $m=1$, since the eigenvalues of $T$ and $T_N$ are the roots of polynomials of degree 3. Thus, we use brute-force search for $\beta_1$ and $\beta_2$ to find a good approximation to the optimal spectral radius of $T$ and $T_N$ as shown in \cref{convergence-table-AA-ALS}.
The table computes optimal parameters for AA($m$)-ALS and sNGMRES-R($m$)-ALS
for $m=1$ and $m=2$, using brute-force minimization of the spectral radius.
We also perform this brute-force optimization for sNGMRES($m$)-ALS with $m=0$ and $m=1$ (corresponding to
1 and 2 coefficients, as for the other two methods).
The optimal brute-force parameters $\beta_{bf}$ are listed in the table.
The search space for the $\beta_{bf}$ parameters was $-1:0.05:1.$.
The results show that sAA(1)-ALS outperforms sNGMRES-R(1)-ALS and sAA(2)-ALS outperforms sNGMRES-R(2)-ALS.
Also, sNGMRES(1)-ALS performs better than sNGMRES-R(1)-ALS, which is as expected, since it has one more free parameter.
Comparing with the analytically optimal parameters in \cref{convergence-table-analytical-para}, we can see that the brute-force result for sAA(1)-ALS in \cref{convergence-table-AA-ALS} is very close to the analytical result of sAA(1)-ALS using $\beta$ of \cref{opt-beta-real}, in accordance with \cref{conjec2-sAA-ALS}.

\begin{remark}
Since there are $2r=6$ eigenvalues 0  in the Hessian, $T$ has 6 eigenvalues of value 1. Thus, when optimizing the spectral radius of $T$, we minimize the  modulus of the first $n-2r$ eigenvalues of $T$ (excluding the $2r$ eigenvalues of value 1), where $T\in \mathbb{R}^{n\times n}$.
\end{remark}

\begin{table}
 \caption{Asymptotic convergence results for  sAA(m)-ALS, sNGMRES-R(m)-ALS and sNGMRES(m)-ALS for different $c$ using brute-force search for optimal parameters $\beta_{bf}$.}
\centering
\begin{tabular}{|l|l|l|l|l|}
\hline
& $c$  \quad       \                      &0.5        &0.7      &0.9   \\
\hline
\hline
\multicolumn{5} {|c|} {\footnotesize{sAA(m)-ALS}} \\
\hline
$m=1$ & $\rho$                   &0.4543               &0.7057         &0.9180   \\
& $\beta_{bf}$                        &0.30                 &0.55           &0.85   \\
\hline
$m=2$ & $\rho$                   &0.4257                &0.6784                  &0.9129   \\
& $\beta_{bf}$                       &(0.45, -0.05)     &(0.80, -0.10)       &(0.95, -0.05)   \\
\hline
\hline
\multicolumn{5} {|c|} {\footnotesize{sNGMRES-R(m)-ALS}} \\
\hline
$m=1$ & $\rho$       &0.4947          &0.7646    &0.9593  \\
& $\beta_{bf}$            &0.15     &0.35       &0.65   \\
\hline
$m=2$ & $\rho$     &0.4947         & 0.7198    &0.9208  \\
& $\beta_{bf}$         &(0.15, 0)      &(0.20, 0.10)      &(0.35, 0.20)  \\
\hline
\hline
\multicolumn{5} {|c|} {\footnotesize{sNGMRES(m)-ALS}} \\
\hline
$m=0$ & $\rho$      &0.5631       &0.8460         &0.9851   \\
& $\beta_{bf}$            &0.40           &0.65            &0.75   \\
\hline
$m=1$ & $\rho$       &0.4434            &0.6994           &0.9573   \\
& $\beta_{bf}$            &(0.30, 0.10) &(0.80, 0.25) &(0.75, 0.55)   \\
\hline
\end{tabular}\label{convergence-table-AA-ALS}
\end{table}

\begin{table}
 \caption{Asymptotic convergence results for different $c$ using the optimal parameters ($\alpha,\beta$) from Table \cref{comparsion-ASD-vs-NGMRESSD}.}
\centering
\begin{tabular}{|l|c|c|c|c|c|}
\hline
$c$                              &0.5         &0.7       &0.9   \\
\hline
$\bar{\kappa}$                   &22.76            &123.90             &3837.90  \\
\hline
\hline
$\rho_{\rm SD}$                  &0.9158          &0.9838             &0.9995        \\

\hline
$\rho_{\rm sAA(1)-SD}$ with $\alpha=1/L$        &0.7904         &0.9102            &0.9839   \\
\hline
$\rho_{\rm sAA(1)-SD}$ with optimal $\alpha$        &0.7597         &0.8964             & 0.9814   \\
\hline
$\rho_{\rm sNGMRES-R(1)-SD}$           &0.6543          &0.8351           & 0.9682     \\
\hline
\hline
$\rho_{\rm ALS}$                 &0.6879          &0.9055             &0.9915  \\
\hline
$\rho_{\rm sAA(1)-ALS}$  with  $\beta$ from \cref{opt-beta-real}      &0.4413          &0.6926            &0.9078     \\
\hline
\end{tabular}\label{convergence-table-analytical-para}
\end{table}

\subsection{Extending the numerical results of Section \ref{subsec:spectral}: sNGMRES-R(1) acceleration and ill-conditioned problems}
\label{subsubsec:sNGMRES(1)}
\ \\
Here we extend the numerical results of Section \ref{subsec:spectral} to sNGMRES-R(1) acceleration and ill-conditioned problems,
comparing with the theoretical results from Sections \ref{sec:convergence-Anderson} and \ref{sec:convergence-NGMRES}.

First, \cref{convergence-table-analytical-para} summarizes,
for increasingly ill-conditioned problems with $c$ ranging from 0.5 to 0.9,
the computed asymptotic convergence factors
of SD and ALS, and the optimal theoretical convergence factors for sAA(1) and sNGMRES-R(1)
acceleration according to \cref{thm:psd-SD-case}, \cref{Thm:positive-negative-AA-SD},
\cref{thm:sNGMRE-R-SD}, and \cref{thm:lower-bound-ALS} and \cref{conjec2-sAA-ALS},
using the optimal parameters ($\alpha,\beta$) from \cref{comparsion-ASD-vs-NGMRESSD}.
As $c$ and $\bar{\kappa}$ increase, the convergence of SD and ALS deteriorate, but
sAA(1) and sNGMRES(1) accelerate them effectively in accordance with the theoretical results.
It is clear that ALS is a much better nonlinear preconditioner than SD, consistent
with the observations in \cref{eigs-sAA-c05-plot,eigs-Hessian-c05-plot,eigs-Hessian-full-c05-plot}.

\begin{figure}[tbhp]
\centering
\includegraphics[width=6.cm]{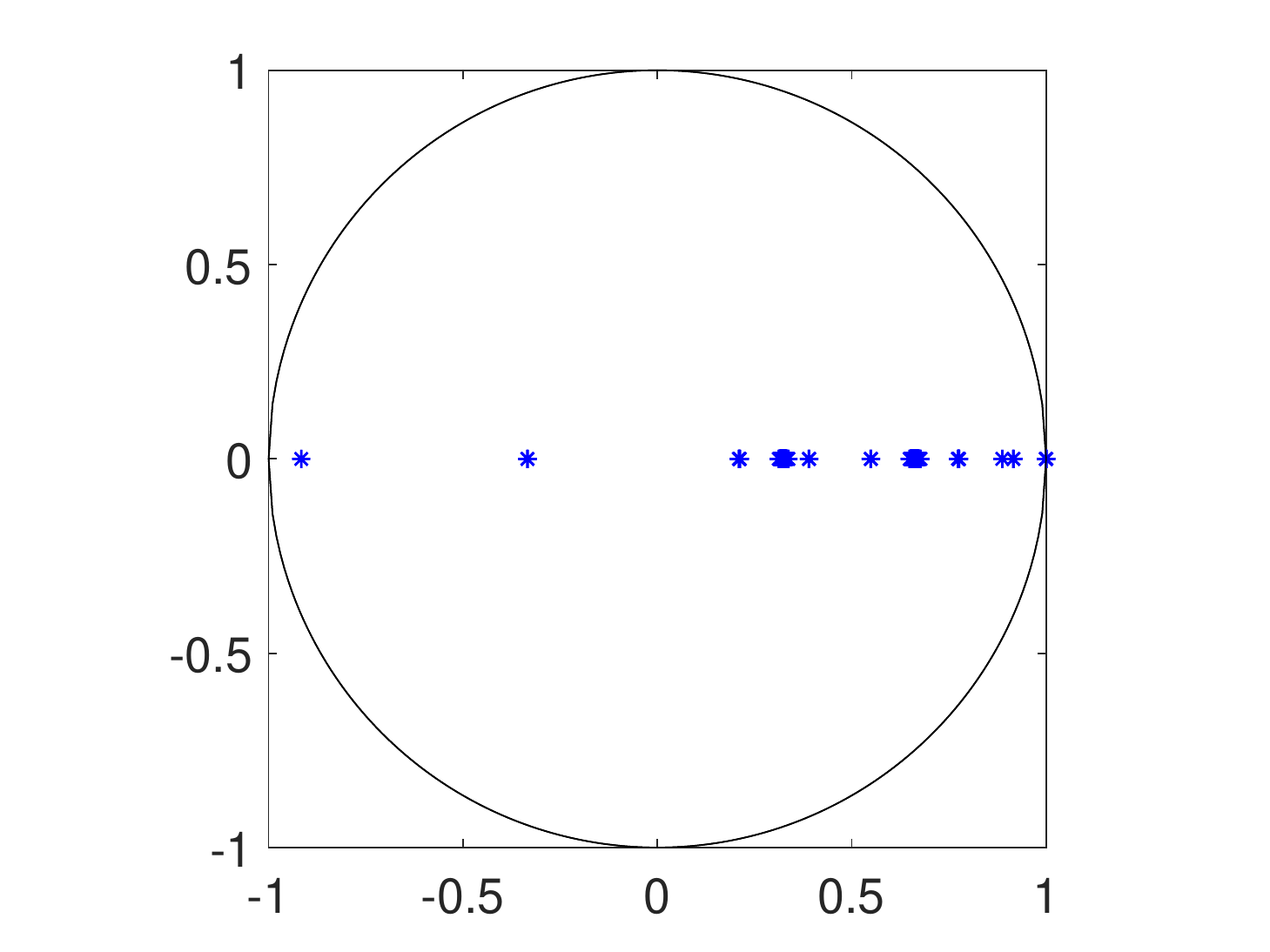}
\includegraphics[width=6.cm]{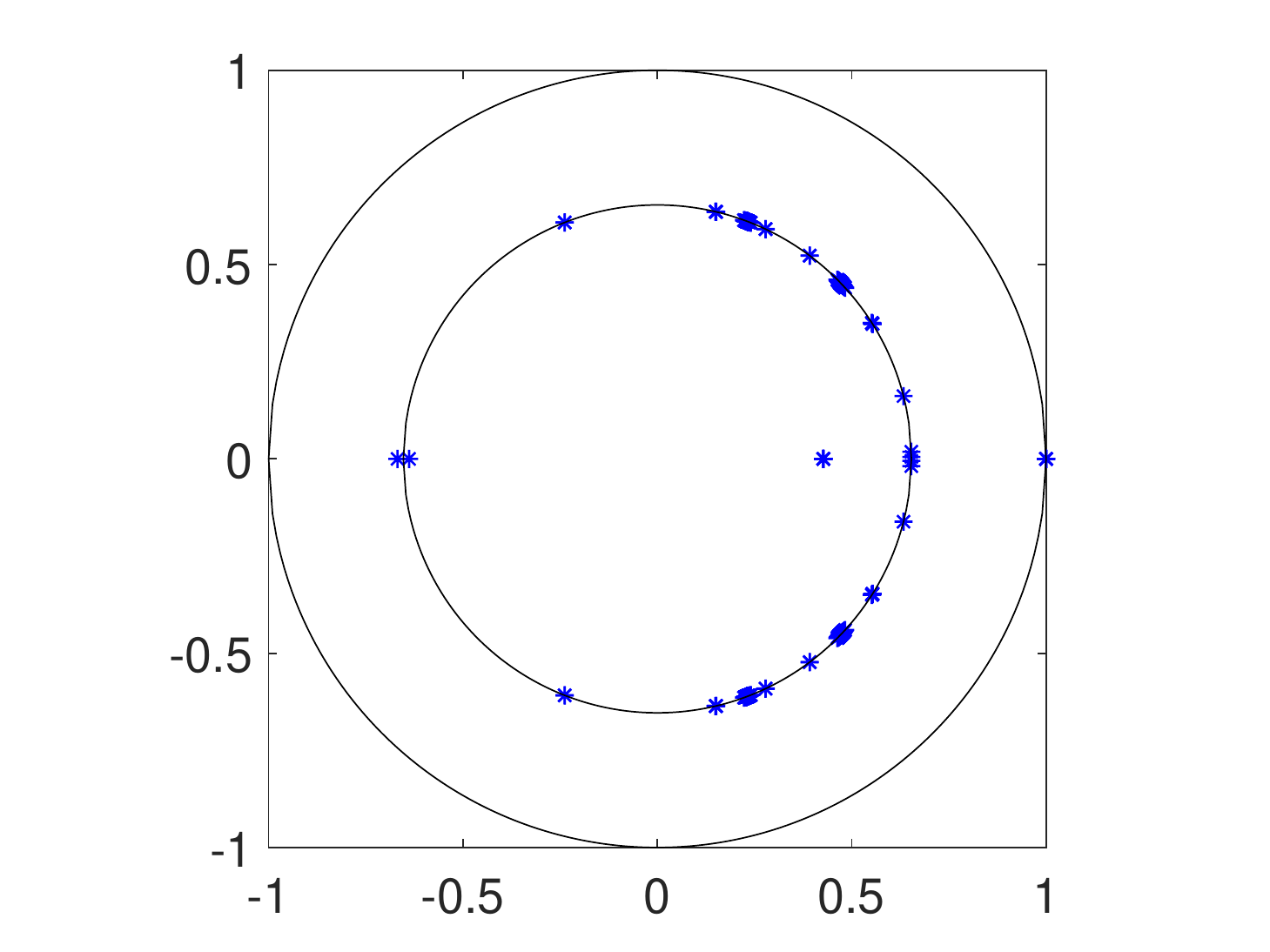}
\includegraphics[width=6cm]{eig-dis-q-prime-ALS-c05}
\includegraphics[width=6cm]{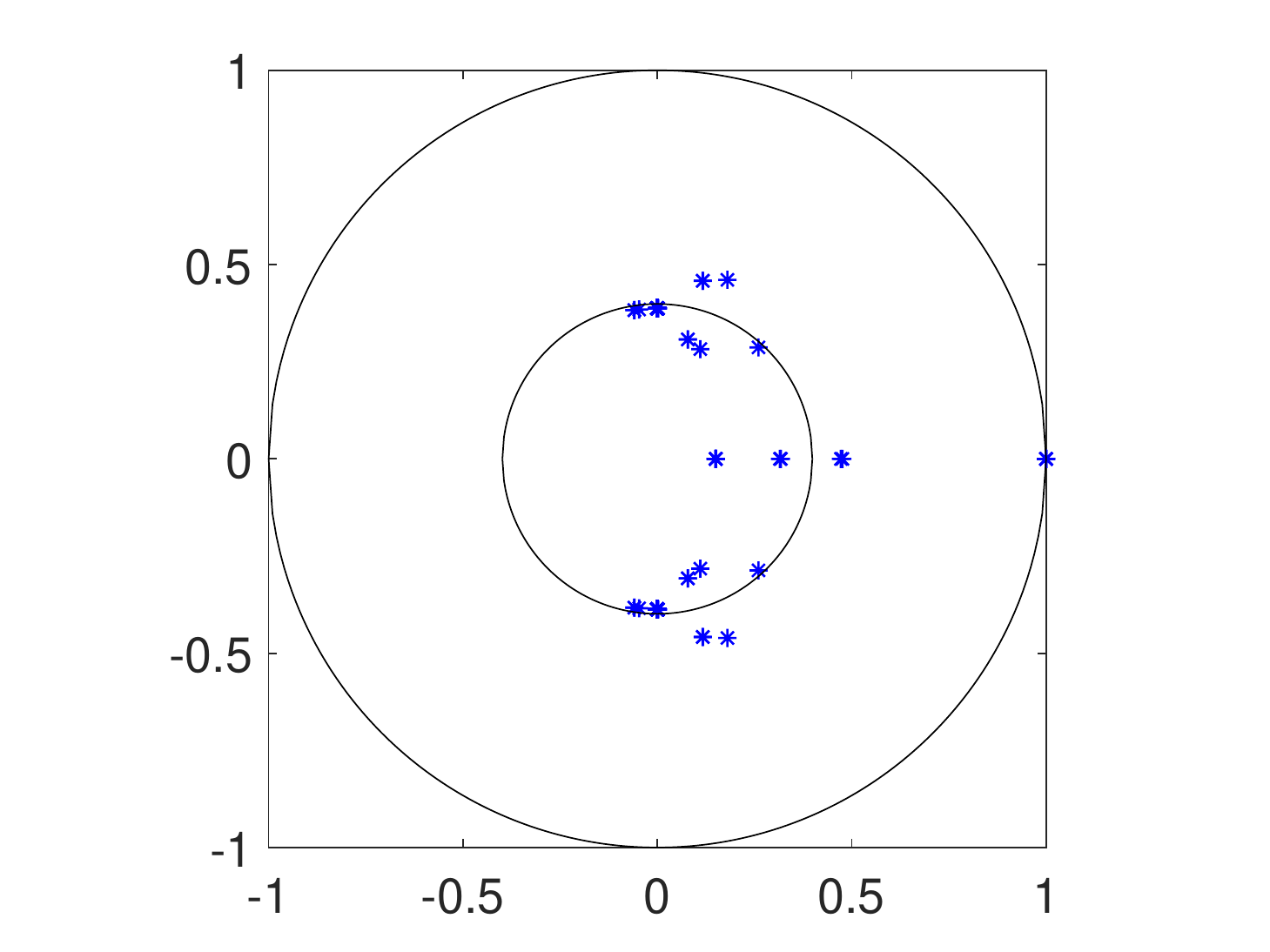}
\caption{
Eigenvalue distributions at $x^*$ for sNGMRES-R(1) acceleration of steepest descent (top row) and ALS (bottom row) for a tensor problem with $c=0.5$.
(top left) Eigenvalues of $q'_{SD}$ with $\alpha$ from \cref{eq:SD-opt-alpha-beta-NGM}; $\rho(q'_{SD})=0.916$.
(top right) Eigenvalues of $T$ for sNGMRES-R(1)-SD with the optimal parameters from \cref{thm:sNGMRE-R-SD}; $\rho(T)=0.654$. The radius of the inner circle is $\rho^{*}_{\rm sNGMRES-R(1)-SD}$ from \cref{eq:improved-SD-opt-cs-NGM}.
(bottom left) Eigenvalues of $q'_{ALS}$; $\rho(q')=0.688$.
(bottom right) Eigenvalues of $T$ for sNGMRES-R(1)-ALS using the $\beta$ obtained by using brute-force optimization in \cref{convergence-table-AA-ALS}; $\rho(T)=0.4947$, and the radius of the inner circle is $\rho_{p,N}=0.3986$ from \cref{eq:similar-lower-bound-NGMRES-ALS}.
} \label{eigs-sNGMRES-c05-plot}
\end{figure}

Next, we consider numerical results for sNGMRES-R(1) acceleration.
\cref{eigs-sNGMRES-c05-plot} shows and quantifies how the same kind of asymptotic convergence
acceleration as for sAA(1) in \cref{eigs-sAA-c05-plot} happens for sNGMRES-R(1) acceleration of SD and ALS,
for a tensor problem with $c=0.5$.
The top row shows how optimal sNGMRES-R(1) acceleration of SD reduces the asymptotic convergence
factor from $\rho(q'_{SD})=0.916$ to $\rho(T)=0.654$.
For convenience, we drop the subscript  $N$ in $T_N$ for the NGMRES iterations in the rest of this paper.
This optimal $\rho(T)$ can be computed as a function of the condition number of $H$
using the theoretical result from \cref{thm:sNGMRE-R-SD}.
The bottom row applies optimal sNGMRES-R(1) acceleration to ALS, reducing the convergence
factor from $\rho(q'_{ALS})=0.688$ to $\rho(T)=0.4947$. Lower and upper bounds for the
optimal $\rho(T)=0.4947$ can be computed from the theoretical results in \cref{thm:lowe-upper-bound},
see \cref{table:Lower-upper-NGMRES}.
We note that in our test problems, $r_1>r_2$ in  \cref{thm:lowe-upper-bound}.
We use a brute-force approach to optimize $a$ in $\delta_2$.
From  \cref{table:Lower-upper-NGMRES}, we see that $\delta_1$ gives a sharp bound, comparing with the optimal results $\rho$ (see  \cref{convergence-table-AA-ALS}) obtained by minimizing the spectral radius of $T$ using the brute-force approach; $\delta_2$ also
gives a useful upper bound.

\begin{table}
 \caption{Lower and upper bounds from \cref{thm:lowe-upper-bound} on the asymptotic convergence factors for sNGMRES-R(1)-ALS.}
\centering
\begin{tabular}{|l|c|c|c|}
\hline
 $c$                 &$\delta_1$      &$\rho$   &$\delta_2$   \\
\hline
 0.5                 &0.4839                  &0.4947    &0.6533 \\
\hline
0.7                  &0.7355                  &0.7647    & 0.9120\\
\hline
0.9                 &0.9548                  &0.9593    &0.9973 \\
\hline
\end{tabular}\label{table:Lower-upper-NGMRES}
\end{table}

\begin{figure}
\centering
\includegraphics[width=4.2cm]{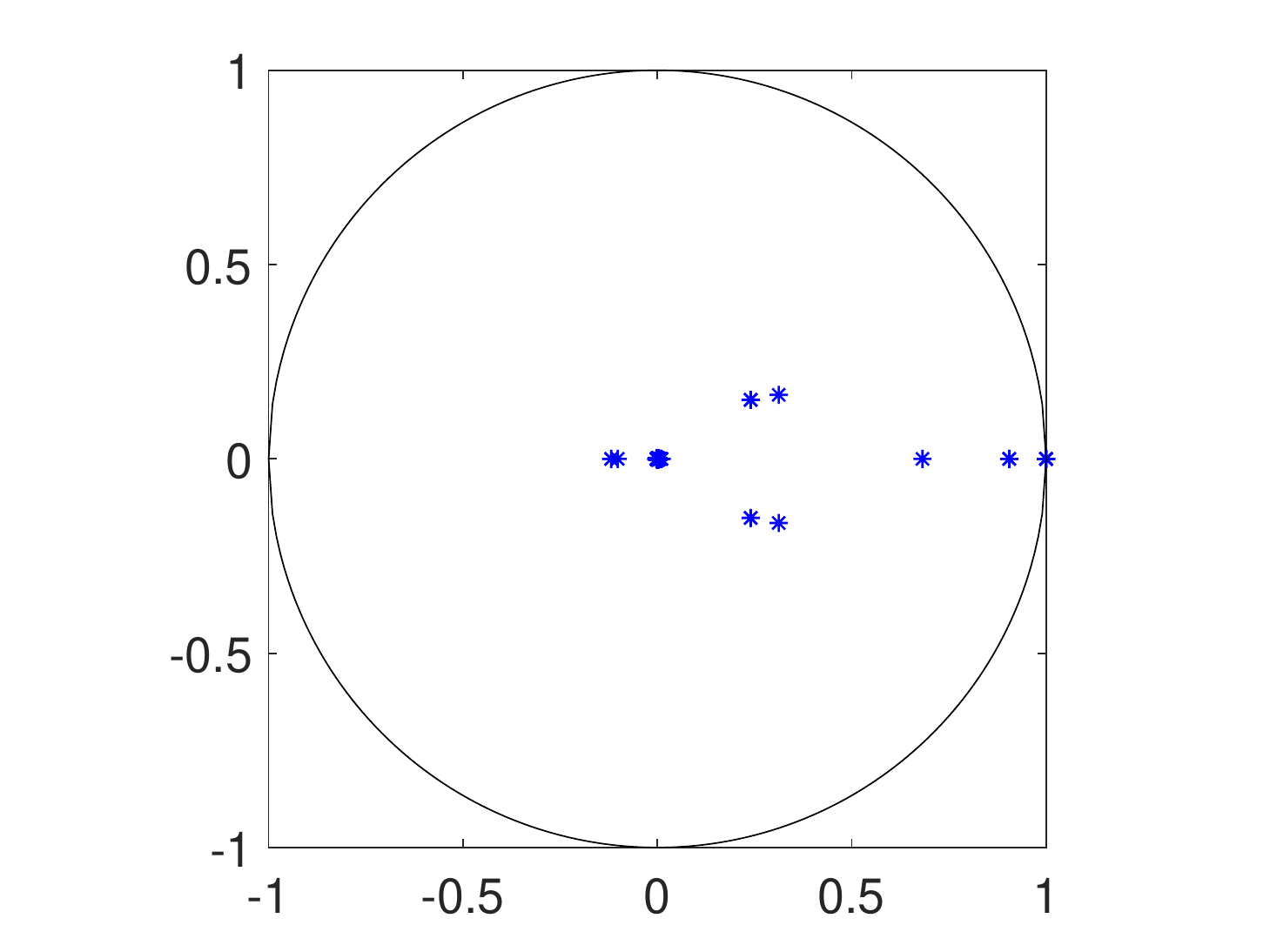}
\includegraphics[width=4.2cm]{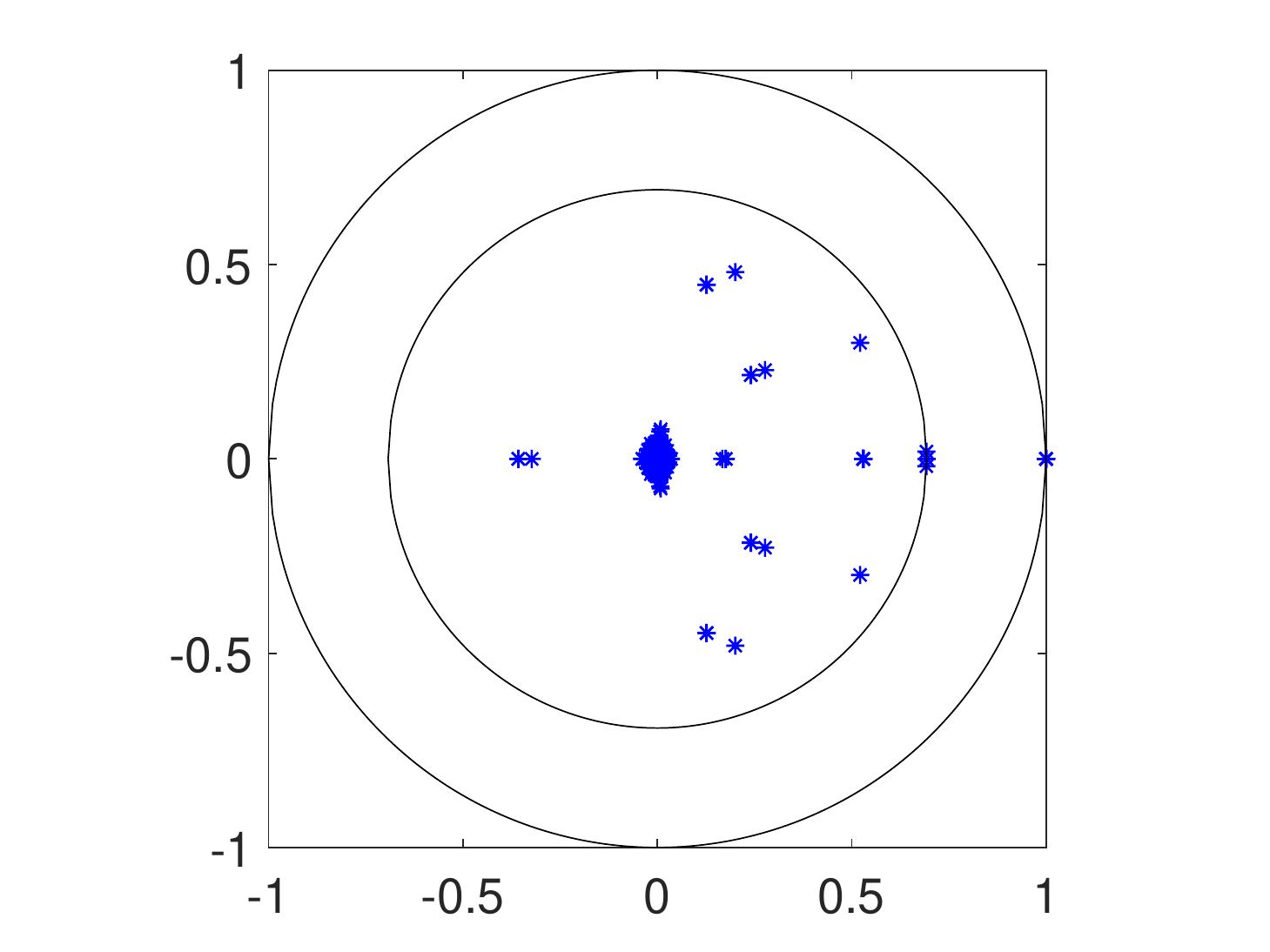}
\includegraphics[width=4.2cm]{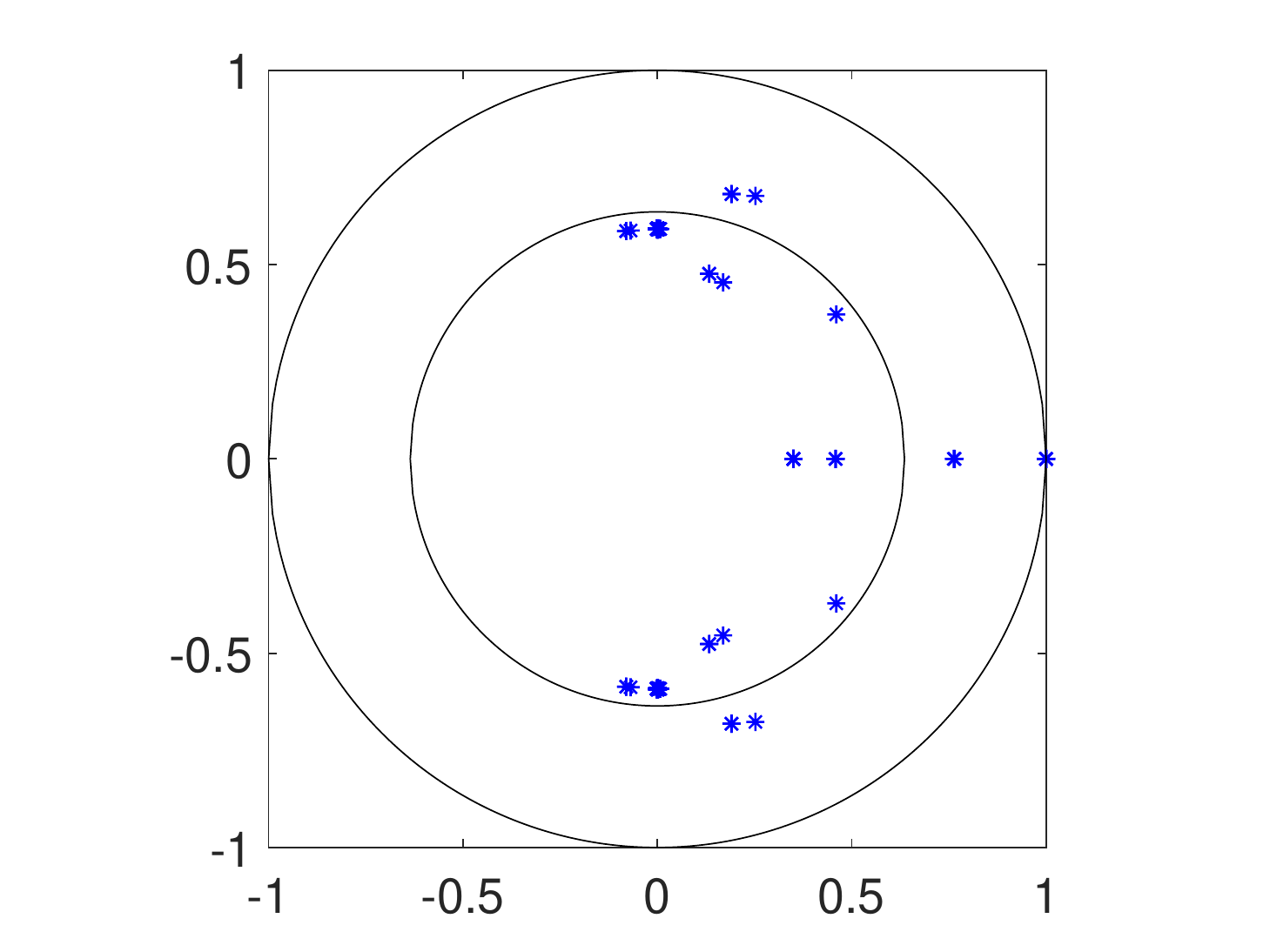}
\includegraphics[width=4.2cm]{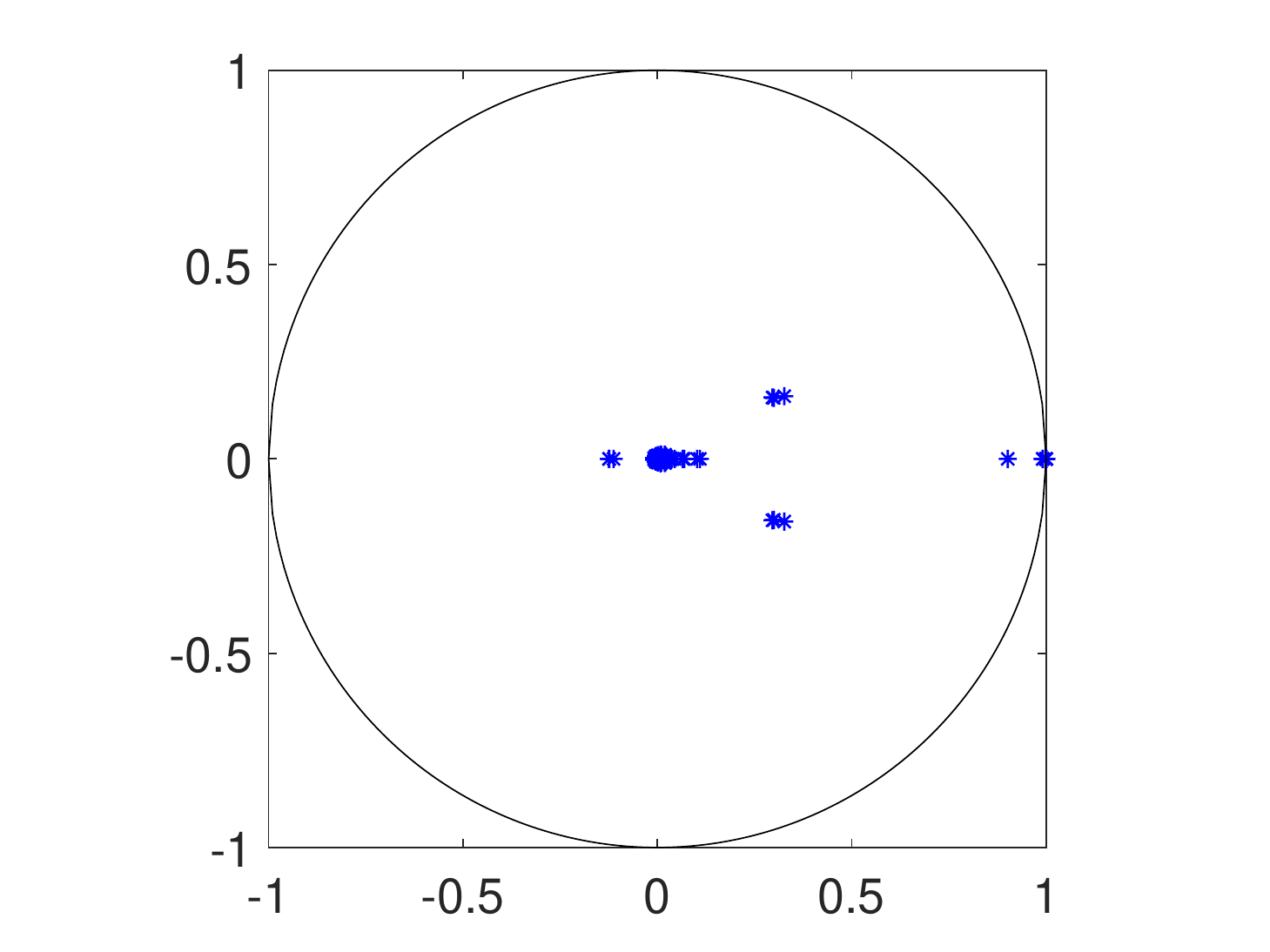}
\includegraphics[width=4.2cm]{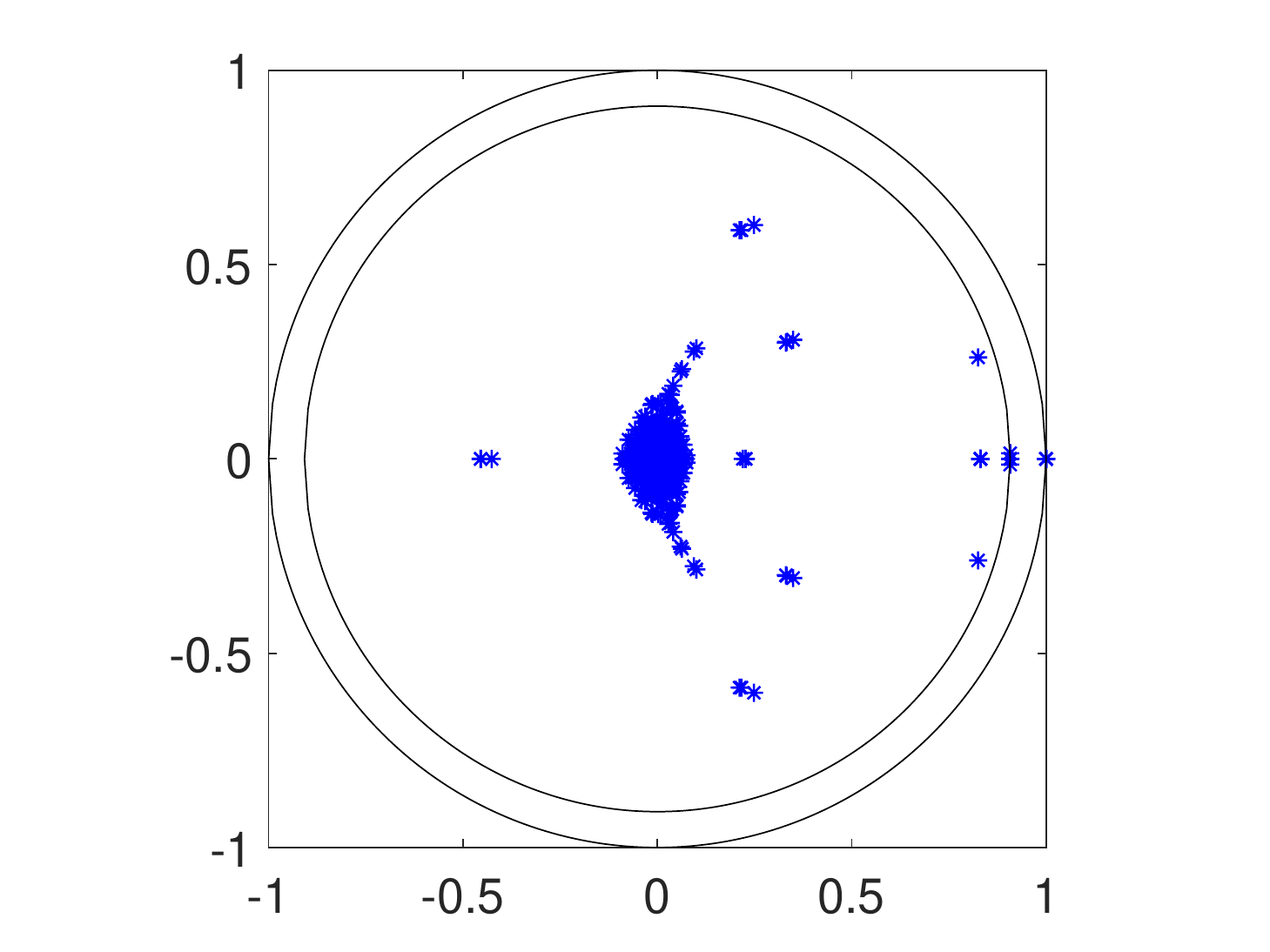}
\includegraphics[width=4.2cm]{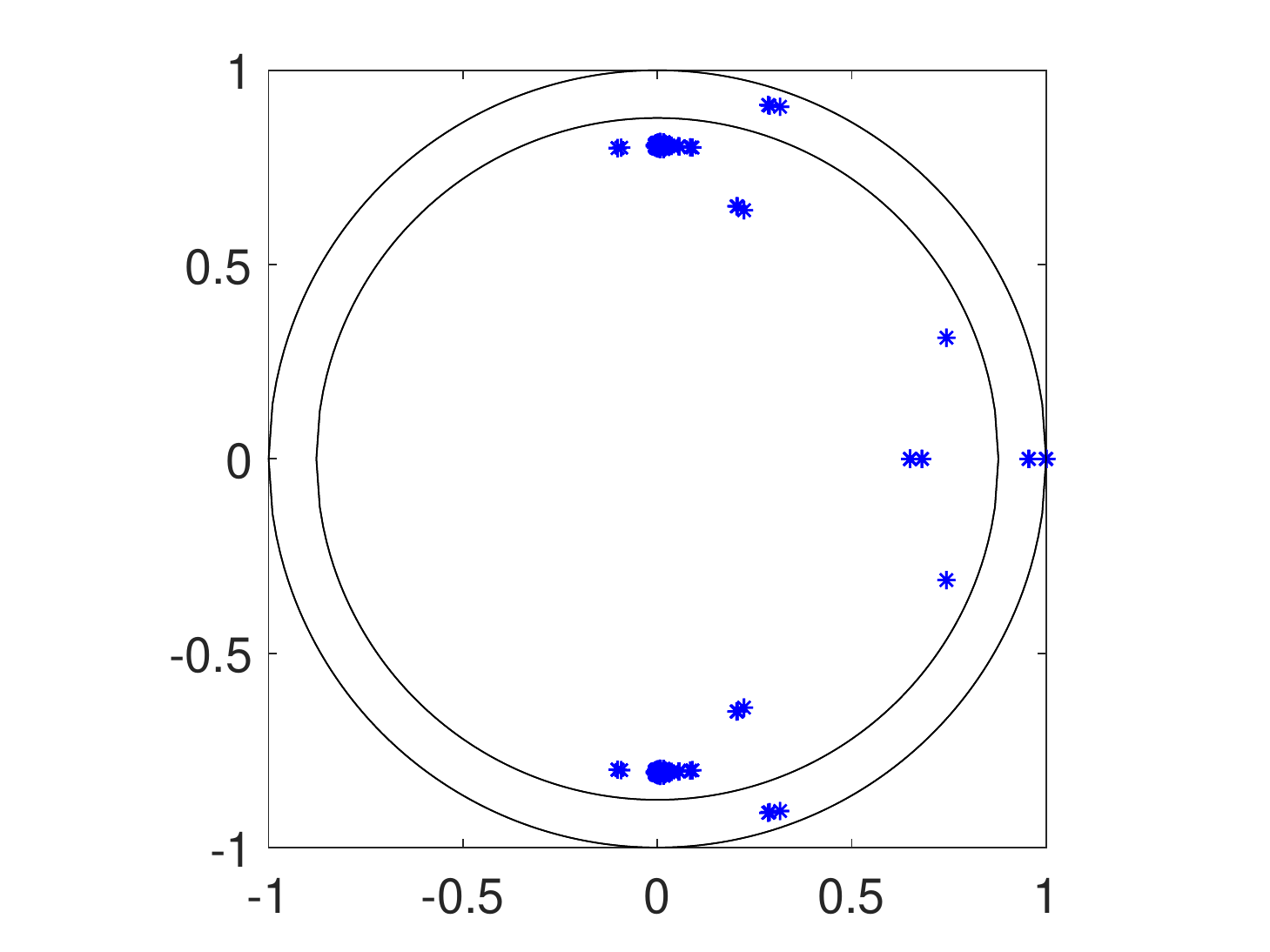}
\caption{
Eigenvalue distributions at $x^*$ for acceleration of ALS by sAA(1) (middle panels) and sNGMRES-R(1) (right panels), for tensor problems with $c=0.7$ (top pannels) and $c=0.9$ (bottom pannels).
(top left) Eigenvalues of $q'_{ALS}$ for $c=0.7$; $\rho(q')=0.906$.
(top middle) Eigenvalues of $T$ for sAA(1)-ALS using the predicted $\beta$ in \cref{opt-beta-real}; $\rho(T)=0.693$.
The radius of the inner circle is $\rho_p$ in \cref{eq:lower-bound-ALS}.
(top right) Eigenvalues of $T$ for sNGMRES-R(1)-ALS using the $\beta$ obtained by using brute-force optimization in \cref{convergence-table-AA-ALS}; $\rho(T)=0.7646$, and the radius of the inner circle is $\rho_{p,N}=0.6357$ from \cref{eq:similar-lower-bound-NGMRES-ALS}.
(bottom left) Eigenvalues of $q'_{ALS}$ for $c=0.9$; $\rho(q')=0.992$.
(bottom middle) Eigenvalues of $T$ for sAA(1)-ALS using the predicted $\beta$ in \cref{opt-beta-real}; $\rho(T)=0.908$.
The radius of the inner circle is $\rho_p$ in \cref{eq:lower-bound-ALS}.
(bottom right) Eigenvalues of $T$ for sNGMRES-R(1)-ALS using the $\beta$ obtained by using brute-force optimization in \cref{convergence-table-AA-ALS}; $\rho(T)=0.9594$, and the radius of the inner circle is $\rho_{p,N}=0.8772$ from \cref{eq:similar-lower-bound-NGMRES-ALS}.
Making abstraction of the eigenvalues one that correspond to the Hessian degeneracy,
the eigenvalues of $q'_{ALS}$ with the largest modulus are real (left panels), and the eigenvalues of $T$ for sAA(1)-ALS with the largest modulus lie on the inner circles (middle panels), in accordance with Conjectures \ref{conjec1-sAA-ALS} and \ref{conjec2-sAA-ALS}.
} \label{eigs-sAA-c07-c09-plot}
\end{figure}

Finally, \cref{eigs-sAA-c07-c09-plot} shows how sAA(1) and sNGMRES-R(1) accelerate ALS for increasingly ill-conditioned tensor problems
with $c=0.7$ and $c=0.9$, with Hessian condition numbers $\bar{\kappa}=123.90$ and $\bar{\kappa}=3837.90$ at
$x^*$. As $\bar{\kappa}$ increases, the ALS convergence factor rapidly deteriorates, to 0.906 and 0.992, and both sAA(1) and
sNGMRES-R(1) manage to improve the optimal asymptotic factors substantially, according to the theoretical results in
 \cref{thm:lower-bound-ALS}, \cref{conjec2-sAA-ALS} and \cref{thm:lowe-upper-bound}.
Although in  \cref{thm:lower-bound-ALS} we only give a lower bound on the optimal convergence factor for sAA(1)-ALS,
we see the bound is achieved for all our examples, in accordance with \cref{conjec2-sAA-ALS}.

\newpage
\subsection{Extending the numerical results of Section \ref{subsec:nonstationary}: asymptotic convergence of nonstationary AA and NGMRES}
\label{subsubsec:nonstationary-stationaryb}
\ \\
Here, we expand on the numerical results from Section \ref{subsubsec:nonstationary-GMRES} on
nonstationary AA and NGMRES. We first provide a remark on convergence speed for $f(x_k)-f(x^*)$.
\begin{remark}\label{rem:f2}
Note that  all the convergence factors $\rho$ discussed in this work are asymptotic for
convergence of $x_k$ to the true solution $x^{*}$:
\begin{equation*}
  \|x_k - x^*\|\approx\rho \|x_{k-1} - x^*\| \quad \text{as}\,\, k\rightarrow \infty.
\end{equation*}
Using the Taylor series for function $f(x)$ in \cref{eq:minf} and the fact $f'(x^{*})=0$ leads to
\begin{eqnarray*}
  f(x) &\approx&  f(x^{*}) + f'(x^{*})(x-x^{*}) + (x-x^{*})^Tf''(x^{*})(x-x^{*})\\
   &=&f(x^{*}) + (x-x^{*})^T H(x^{*})(x-x^{*}).
\end{eqnarray*}
From this we see that
\begin{equation*}
  \|f(x_k)-f(x^{*})\| \approx C\|x_k - x^*\|^2,
\end{equation*}
where $C$ is a constant that depends on the largest modulus of the eigenvalues of $H$.
Therefore,
\begin{equation}\label{eq:conv-order-f}
 \|f(x_k)-f(x^{*})\| \approx \rho^2 \|f(x_{k-1})-f(x^{*})\| \quad \,\,\text{as}\,\, k\rightarrow \infty.
\end{equation}
Relation \cref{eq:conv-order-f} is used to investigate $\rho$ in the numerical results of Section \ref{subsec:nonstationary}
and this Section.
\end{remark}
\begin{figure}[H]
\centering
\includegraphics[width=0.9\linewidth]{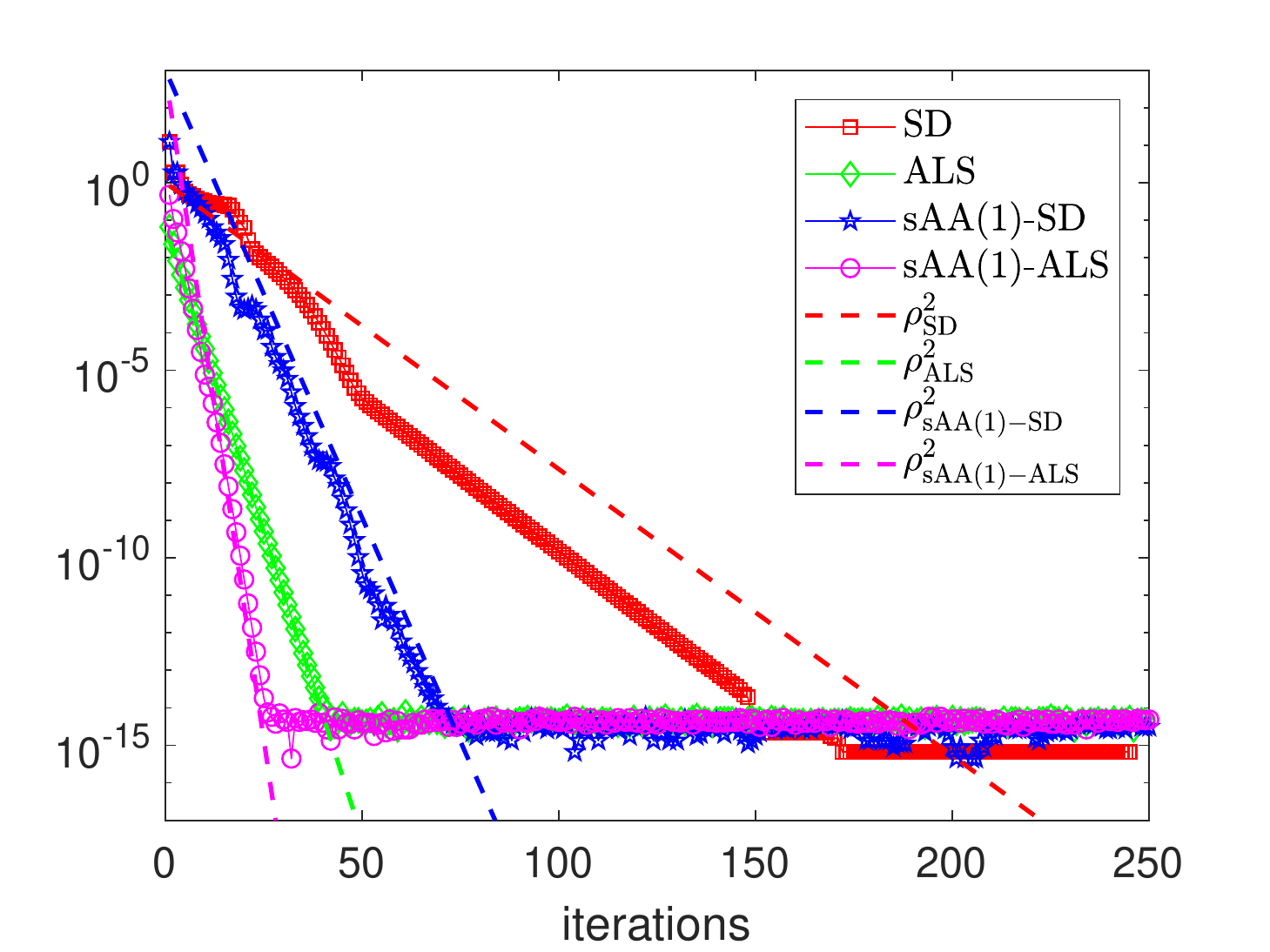}  
\caption{
Comparison of optimal sAA(1) methods for SD and ALS with theoretical asymptotic convergence factors, for a tensor problem
with $c=0.5$. The vertical axis represents $f(x_k)-f(x^*)$, the convergence towards the minimum value of $f(x)$.}
\label{c05-sAA-sNGMRES-plot}
\end{figure}

In \cref{c05-sAA-sNGMRES-plot} we compare convergence plots for nonlinear sAA(1)
iterations with optimal coefficients
for SD and ALS with the theoretical asymptotic convergence factors
$\rho_{\rm sAA(1)-SD}$ from \cref{Thm:positive-negative-AA-SD} and $\rho_{\rm sAA(1)-ALS}$ from \cref{thm:lower-bound-ALS,conjec2-sAA-ALS}, for a tensor problem with $c=0.5$.
For all simulations with SD steps in this section, we use the standard Mor\'{e}-Thuente cubic line search method of \cite{MR1367800} to determine the SD step length $\alpha_k$ in each iteration.
We observe that the nonlinear methods, with line searches for the SD steps and with a globalization mechanism that is based on the cubic line search, attain asymptotic convergence behavior that is consistent with the theoretical asymptotic convergence factors.

\begin{figure}[H]
\centering
\includegraphics[width=0.9\linewidth]{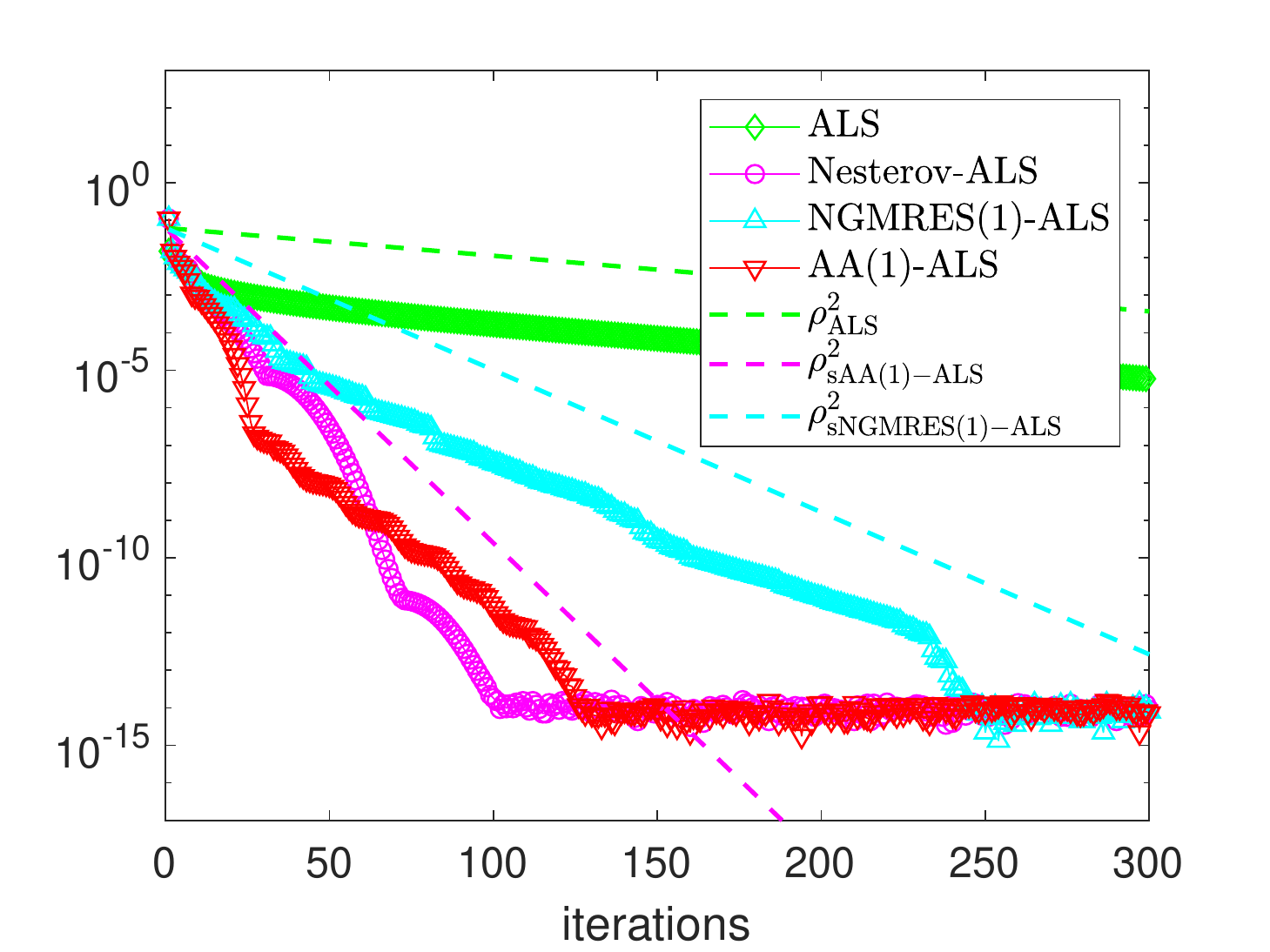}
\caption{
Comparison of the nonstationary AA(1)-ALS, NGMRES(1)-ALS, and Nesterov-ALS methods with theoretical asymptotic convergence factors for optimal stationary methods, for a tensor problem with $c=0.9$. The vertical axis represents $f(x_k)-f(x^*)$, the convergence towards the minimum value of $f(x)$.
}
\label{c05-AA-NGMRES-plotb}
\end{figure}

\cref{c05-AA-NGMRES-plotb} shows how the nonstationary AA, NGMRES and Nesterov methods applied to ALS
show convergence rates that are consistent with the predictions from optimal stationary methods, for an
ill-conditioned tensor problem with $c=0.9$, complementary to the results of \cref{c05-AA-NGMRES-plot}
for $c=0.5$ and $c=0.7$.

\cref{c05-inf-plotb} shows results for additional tensors with $c=0.5$ and $c=0.7$, using random seeds that
are different from \cref{c05-inf-plot}.
While the specific convergence traces for this nonconvex nonlinear problem depend substantially
on the random seed used, these results for additional random seeds confirm the general trends
of \cref{c05-inf-plot}.

\begin{figure}[H]
\centering
\includegraphics[width=0.49\linewidth]{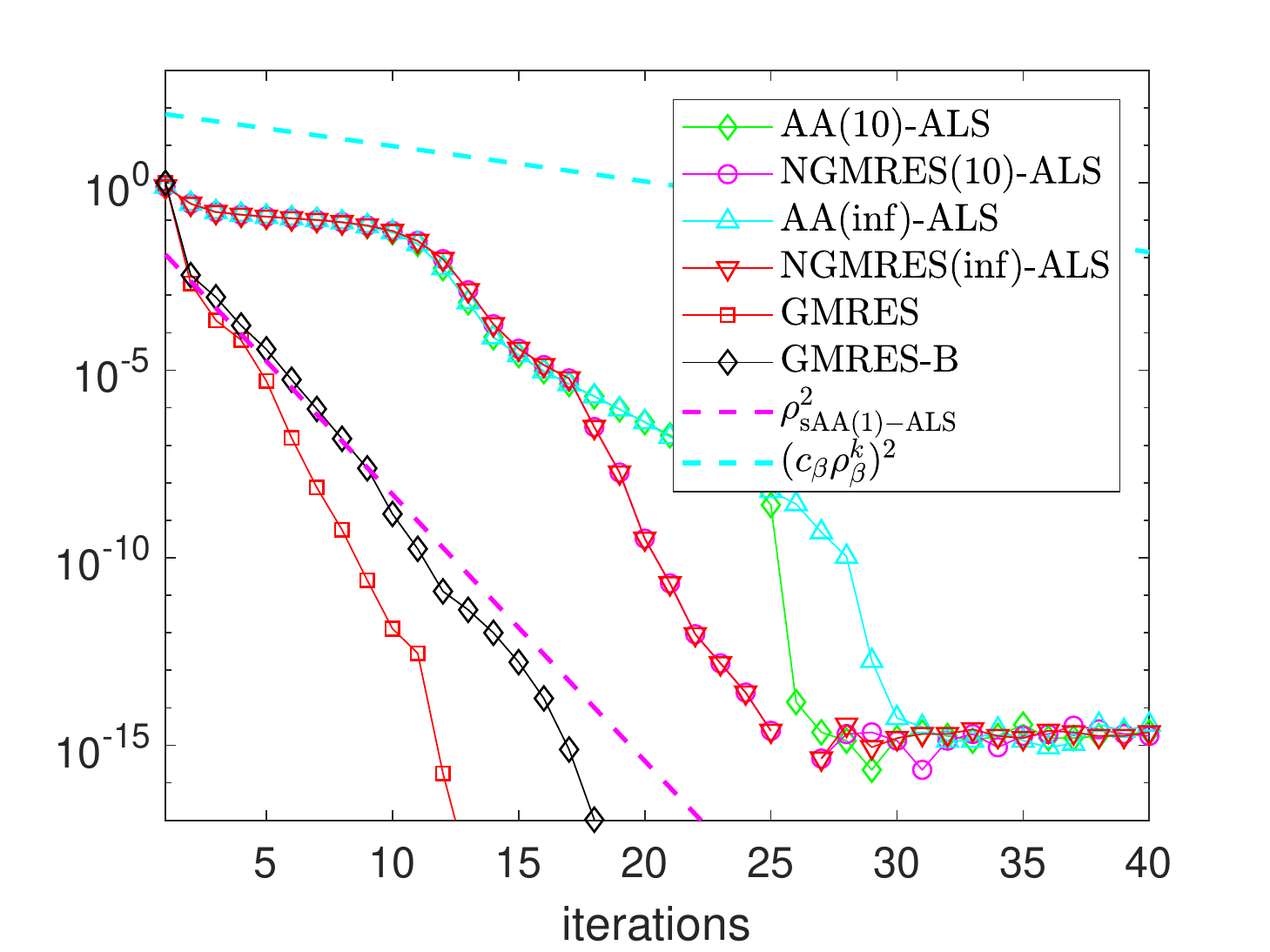}
\includegraphics[width=0.49\linewidth]{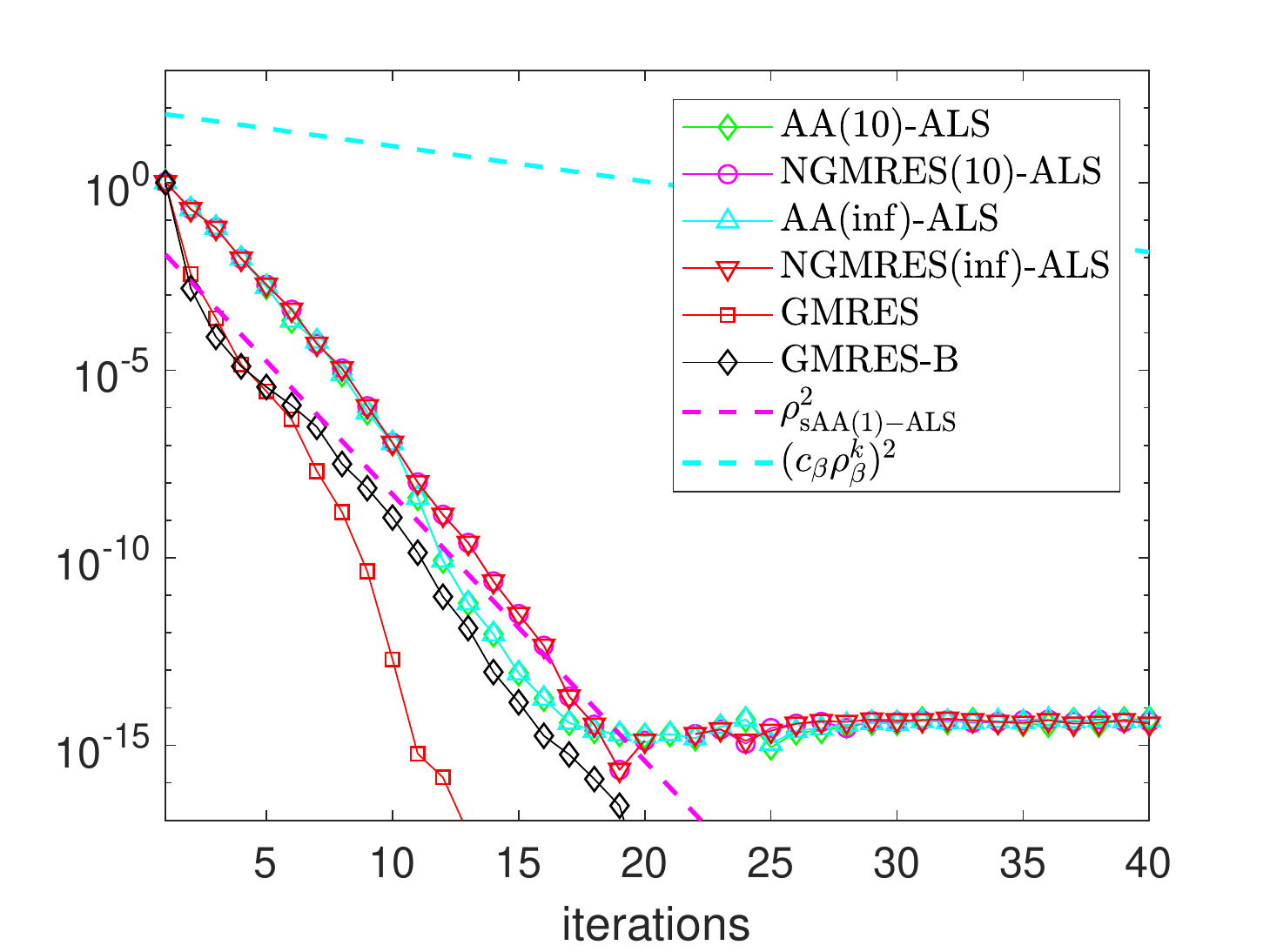}
\includegraphics[width=0.49\linewidth]{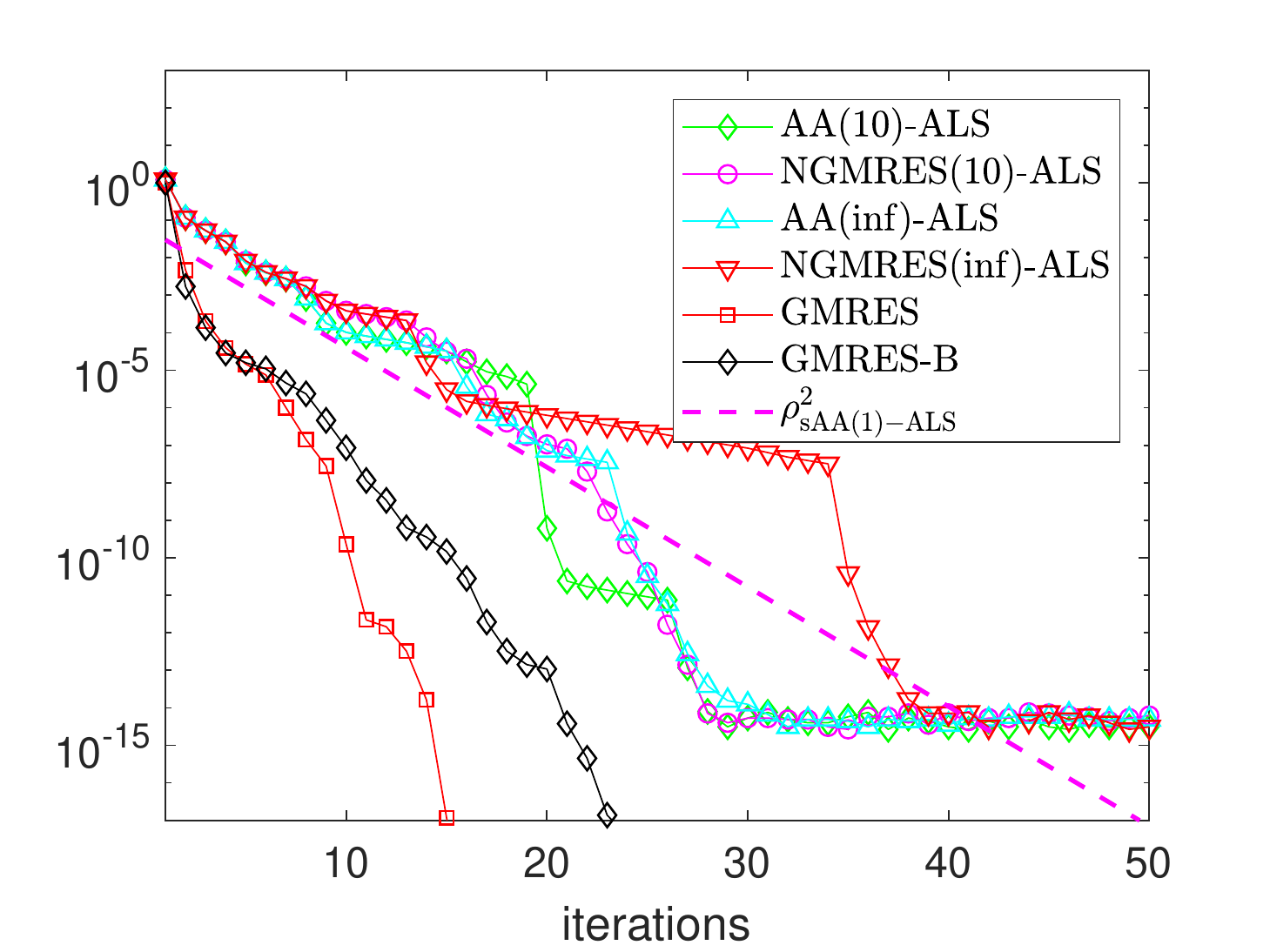}
\includegraphics[width=0.49\linewidth]{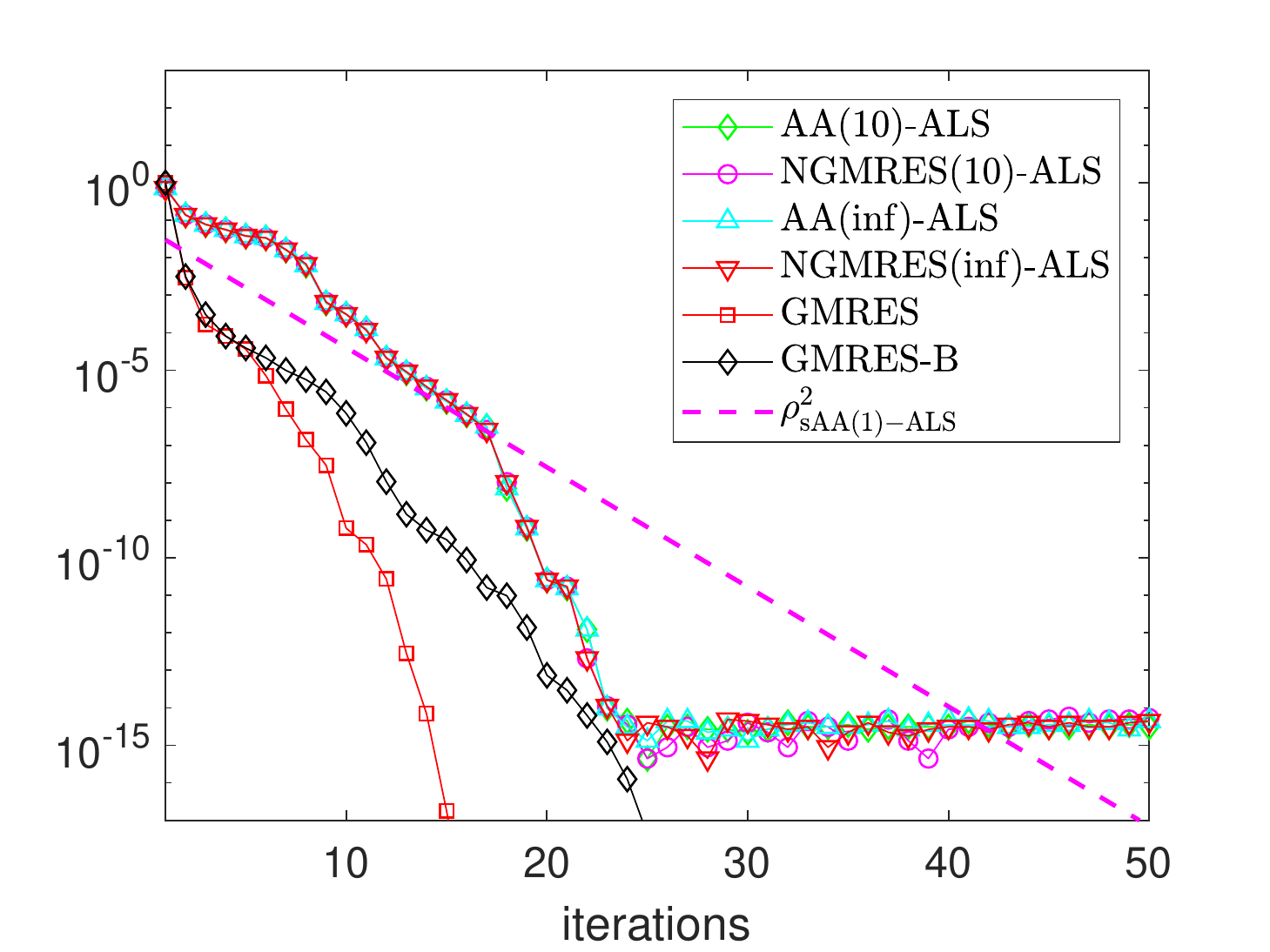}
\caption{Comparison of AA and NGMRES convergence curves for two random tensor problems with $c=0.5$
with different random seeds (top panels), and for two random tensor problems with $c=0.7$
with different random seeds (bottom panels).
The four nonlinear AA and NGMRES curves are compared with GMRES applied to
linearized equation \cref{eq:fixed-lin} and GMRES-B applied to projected
nonsingular linearized system \cref{eq:projected}. For the $c=0.5$ panels,
$c_\beta \rho_{\beta}^k$ computed based on the FOV of \cref{c05-FOV-plot}
provides a pessimistic upper bound.
Our new $\rho_{sAA(1)-ALS}$ from \cref{thm:lower-bound-ALS,conjec2-sAA-ALS} appears to provide
a useful indication of the convergence speed of the linear and nonlinear methods.
For the four nonlinear methods, the vertical axis represents $f(x_k)-f(x^*)$, the convergence towards the minimum value of $f(x)$.
For the GMRES runs, the vertical axis represents $\|r_k\|^2/\|r_0\|^2$.}
\label{c05-inf-plotb}
\end{figure}

\newpage
\subsection{Verifying \cref{conjec1-sAA-ALS} and \cref{conjec2-sAA-ALS} for real-world data}
\ \\

\begin{figure}[H]
\centering
\includegraphics[width=6.cm]{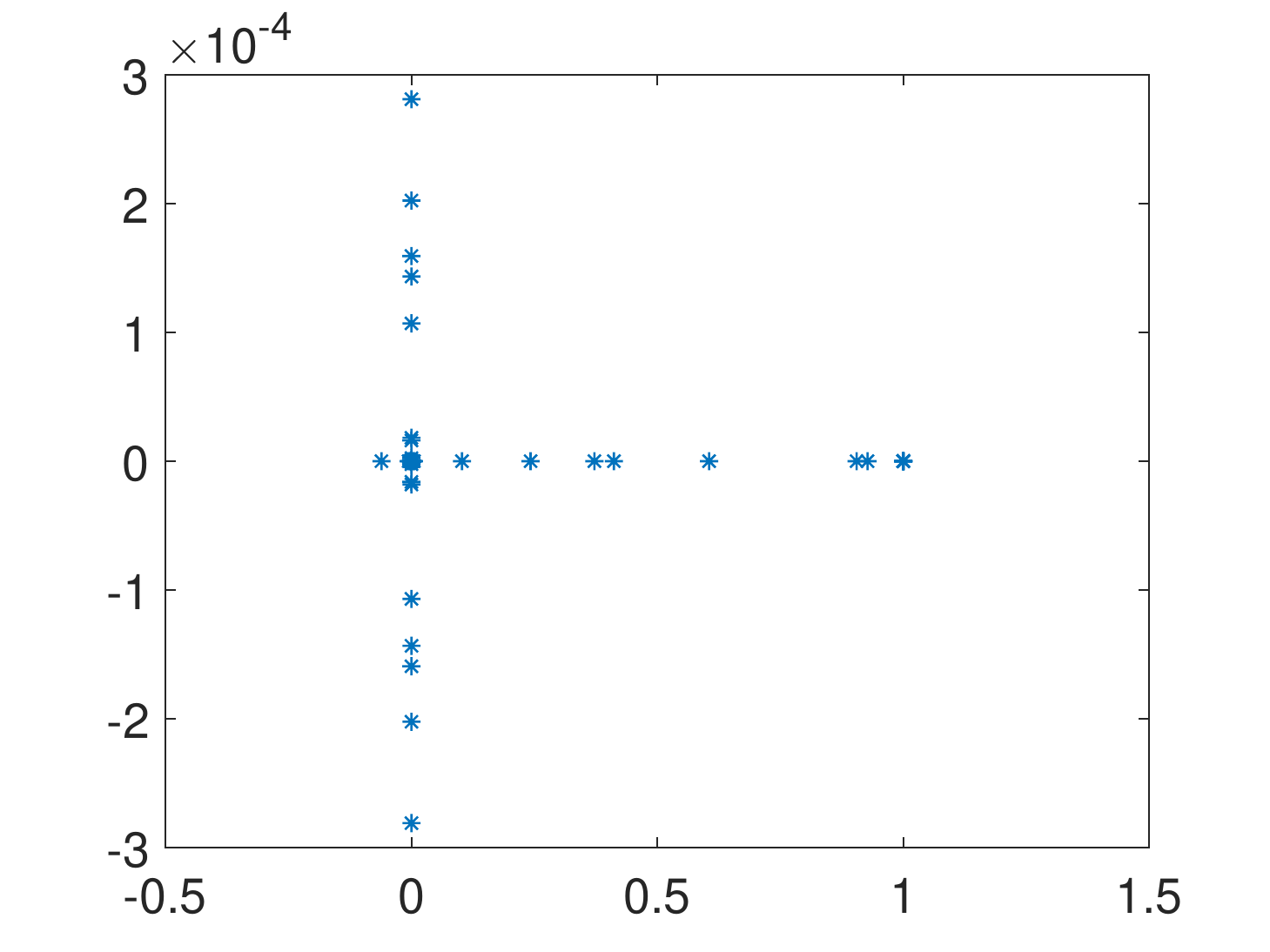}
\includegraphics[width=6.cm]{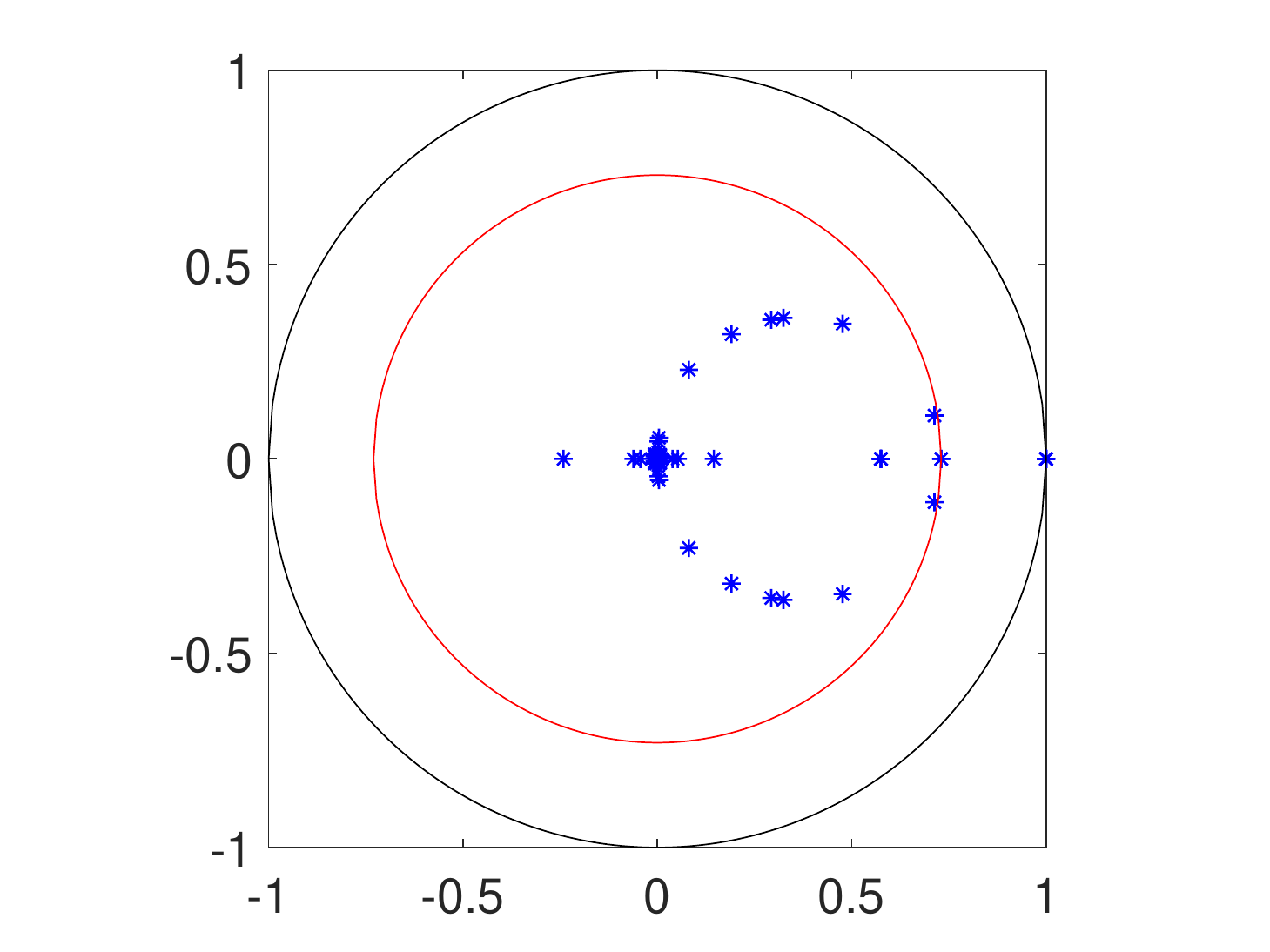}\\
\includegraphics[width=6cm]{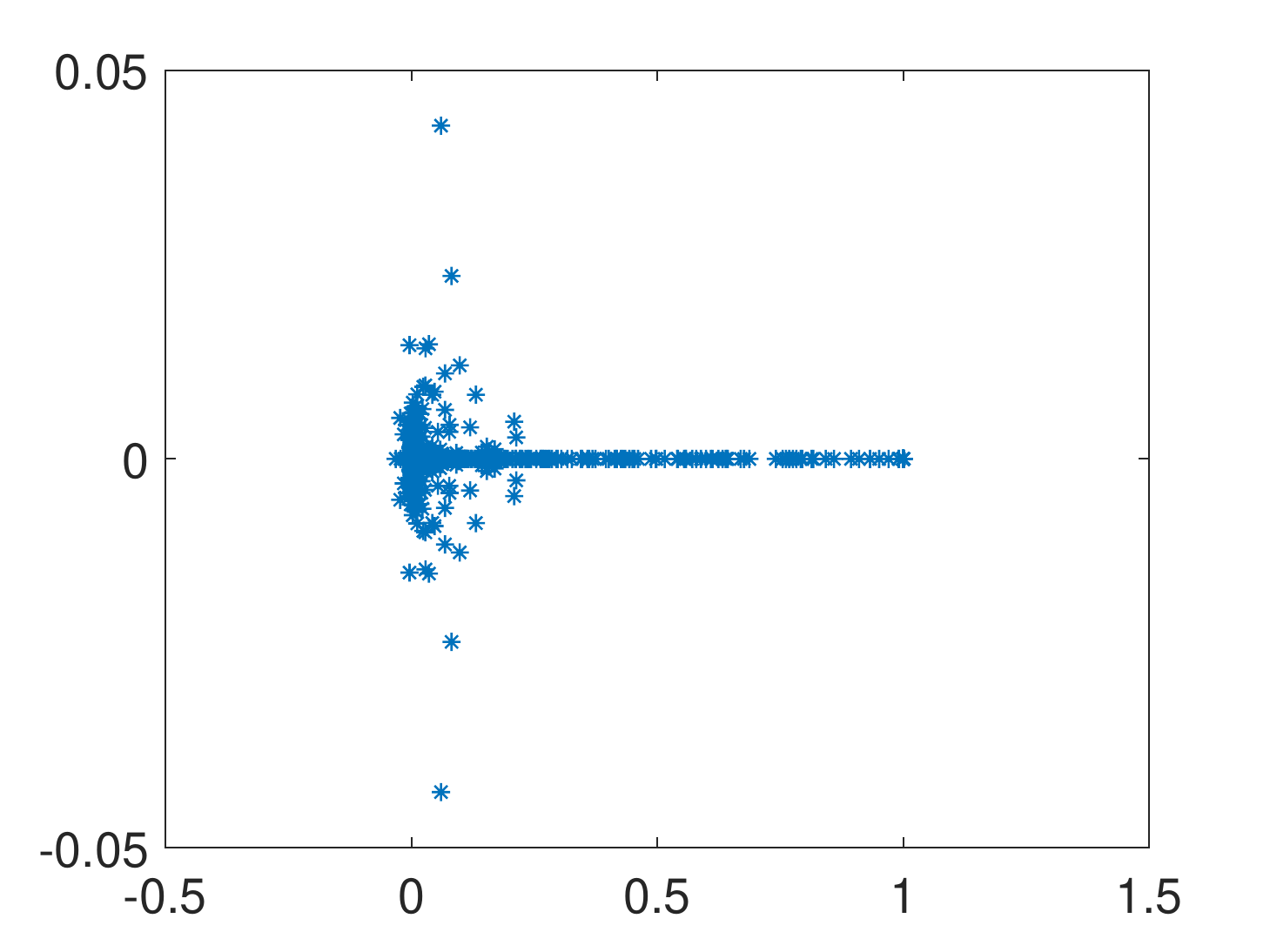}
\includegraphics[width=6cm]{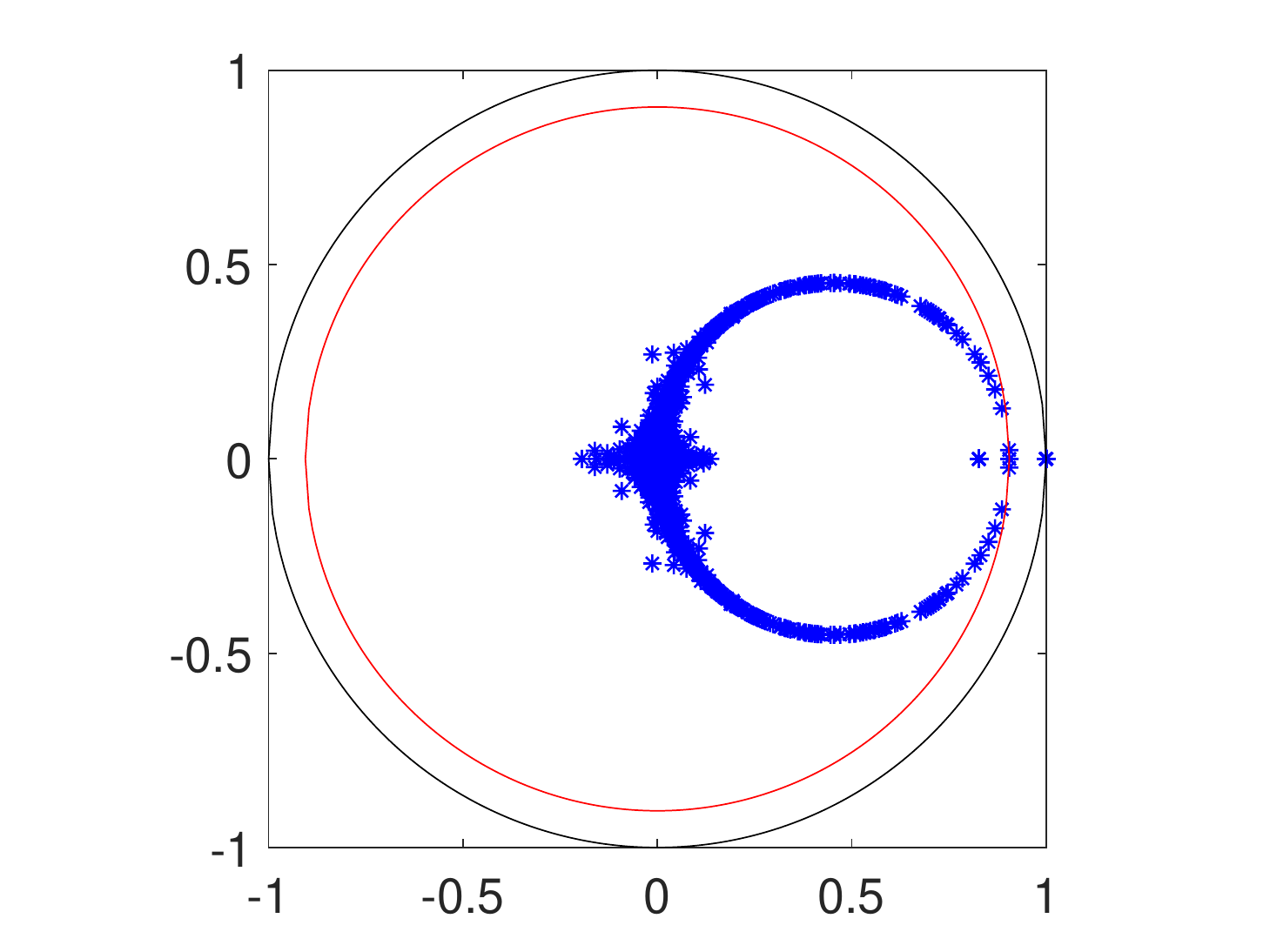}
\caption{
Eigenvalue distributions at $x^*$ for sAA(1) acceleration of  ALS for real-data tensor problems from \cite{mitchell2020nesterov}: Claus data (top row) and Enron data (bottom row).
(top left) Eigenvalues of $q'_{ALS}$ for Claus data.
(top right) Eigenvalues of $T$ for sAA(1)-ALS for Claus data using the predicted $\beta$ in \cref{opt-beta-real}. The radius of the inner circle is $\rho_p$ in \cref{eq:lower-bound-ALS}.
(bottom left) Eigenvalues of $q'_{ALS}$ for Enron data.
(bottom right) Eigenvalues of $T$ for sAA(1)-ALS for Enron data using the predicted $\beta$ in \cref{opt-beta-real}.
The radius of the inner circle is $\rho_p$ in \cref{eq:lower-bound-ALS}.
Making abstraction of the eigenvalues one that correspond to the Hessian degeneracy,
the eigenvalue of $q'_{ALS}$ with the largest modulus is real, and the eigenvalue of $T$ with the largest modulus lies on the inner circle, in accordance with Conjectures \ref{conjec1-sAA-ALS} and \ref{conjec2-sAA-ALS}.
}
\label{eigs-sAA-real-plot}
\end{figure}

\end{document}